\newcommand{\cS}{\mathcal{S}}
\newcommand{\cQ}{\mathcal{Q}}
\newcommand{\cH}{\mathcal{H}}
\newcommand{\HH}{\mathbb{H}}
\newcommand{\PP}{\mathbb{P}}
\newcommand{\QQ}{\mathbb{Q}}
\newcommand{\cG}{\mathcal{G}}
\newcommand{\PG}{\mathrm{PG}}
\newcommand{\FF}{\mathbb{F}}
\newcommand{\bS}{\mathbb{S}}
\newcommand{\Fix}{\mathrm{Fix}}
\newcommand{\ch}{\mathrm{char}}
\newcommand{\cP}{\mathcal{P}}
\theoremstyle{plain}
\newtheorem{MainTheo}{Theorem}
\newtheorem{MainCor}[MainTheo]{Corollary}
\newtheorem{lemma}{Lemma}[section]
\newtheorem{theorem}[lemma]{Theorem}
\newtheorem{corollary}[lemma]{Corollary}
\newtheorem{setting}[lemma]{Setting}
\newtheorem{conjecture}[lemma]{Conjecture}
\theoremstyle{definition}
\newtheorem{definition}[lemma]{Definition}
\newtheorem{remark}[lemma]{Remark}
\begin{document}
\begin{frontmatter}
\title{Grassmann embeddings of polar Grassmannians}

\author[SI]{Ilaria Cardinali}
\ead{ilaria.cardinali@unisi.it}
\address[SI]{Department of Information Engineering and Mathematics, University of Siena,
Via Roma 56, I-53100, Siena, Italy}
\author[LG]{Luca Giuzzi\corref{cor1}}
\ead{luca.giuzzi@unibs.it}
\address[LG]{DICATAM - Section of Mathematics,
University of Brescia,
Via Branze 53, I-25123, Brescia, Italy}
\cortext[cor1]{Corresponding author. Tel. +39 030 3715739; Fax. +39 030 3615745}
\author[SI,r]{Antonio Pasini}
\ead{antonio.pasini@unisi.it}
\fntext[r]{(retired)}
\begin{abstract}
In this paper we compute the dimension of the Grassmann embeddings of the polar Grassmannians associated to a possibly degenerate Hermitian, alternating or quadratic form with possibly non-maximal Witt index. Moreover, in the characteristic $2$ case, when the form is quadratic and non-degenerate with bilinearization of minimal Witt index, we define a generalization of the so-called Weyl embedding (see \cite{IP13}) and prove that the Grassmann embedding is a quotient of this generalized `Weyl-like' embedding. We also estimate the dimension of the latter.
\end{abstract}
\begin{keyword}
  Pl\"ucker Embeddings \sep Polar grassmannians \sep Weyl embedding
\MSC[2010] 51A50 \sep 15A75 \sep 14M15
\end{keyword}
\end{frontmatter}
\section{Introduction}

Let  $V:=V(N,\FF)$ be  a vector space of dimension $1<N<\infty$ over a field $\FF$, equipped with a (possibly degenerate) sesquilinear or quadratic form $\eta$ such that $V$ is spanned by the set of the vectors that are singular for $\eta$.
Let $R := \mathrm{Rad}(\eta)$ be the radical of $\eta$ and let $n$ be the
\emph{reduced Witt index} of $\eta$, namely the Witt index of the non-degenerate form induced by $\eta$ on $V/R$. The numbers $d := \dim(R)$ and $n+d$ are the (\emph{singular}) \emph{defect} and the \emph{Witt index} of $\eta$, respectively. With respect to $\eta$, the space $V$ admits a direct sum decomposition
\begin{equation}\label{decomposizione di base}
V = (\bigoplus_{i=1}^nV_i)\oplus V_0 \oplus R
\end{equation}
where $V_1, V_2,\dots, V_n$ are mutually orthogonal hyperbolic $2$-spaces and $V_0$ is an $(N-2n-d)$-dimensional totally anisotropic subspace orthogonal to each of $V_1, V_2,\dots, V_n$. In order to avoid trivial cases we assume $n > 1$. We call the number $d_0 := \dim(V_0)$ the \emph{anisotropic defect} of $\eta$ and we denote it $\mathrm{def}_0(\eta)$, while $\mathrm{def}(\eta)$ stands for $d$.

For $1 \leq k < N$ denote by $\cG_k$  the $k$-Grassmannian of $V$, that is the point-line geometry having as points the subspaces of $V$ of dimension $k$ and as lines the sets of the form $\ell_{X,Y}:=\{Z\colon  X<Z<Y, \dim(Z) = k\}$, where $X$ and $Y$ are two subspaces of $V$ with $\dim(X)=k-1$, $\dim(Y)=k+1$
and $X<Y$. Incidence is containment.

Let $e_k:\cG_{k}\to\PG(\bigwedge^k V)$ be the Pl\"ucker (or Grassmann) embedding  of $\cG_k$, mapping the point $\langle v_1,\dots,v_k\rangle$ of $\cG_k$ to the projective point $\langle v_1\wedge\dots\wedge v_k\rangle$ of $\PG(\bigwedge^k V)$. The dimension of an embedding is defined as the vector dimension of the space spanned by its image. It is well known that $\dim(e_k)= {N\choose k}$.

With $\eta$ a form of reduced Witt index $n>1$ and singular defect $d$, for $k=1,\dots,n+d$  the \emph{polar $k$-Grassmannian} associated to $\eta$ is the point-line geometry having as points the totally $\eta$--singular $k$-dimensional subspaces of $V$. Lines are defined as follows:
\begin{enumerate}
\item For $k< n+d$, the lines are the lines $\ell_{X,Y}$ of $\cG_k$ with $Y$ totally $\eta$-singular. 
\item
  For $k=n+d$, if $\eta$ is sesquilinear then the lines are the sets as follows
  \[ \ell_{X}:=\{ Y: X\leq Y, \dim(Y)=n+d, \,Y\,\, {\text{totally $\eta$--singular}}\}  \,\,\mbox{\rm  with}\,\, \dim(X)=n+d-1\]
  and $X$ totally $\eta$-singular,
while when $\eta$ is a quadratic form they are the sets
\[ \ell_{X}:=\{ Y: X\leq Y\leq X^{\perp}, \dim(Y)=n+d, \,Y \,\, {\text{totally $\eta$--singular}}\} \,\,\mbox{\rm  with}\,\, \dim(X)=n+d-1,\]
where $X$ is totally $\eta$-singular and $X^\perp$ is the orthogonal of $X$ w.r.t. the bilinearization of $\eta$.
\end{enumerate}
Let now $\cP_k$ be the polar $k$-Grassmannian defined by $\eta$. If $k=1$ the geometry $\cP_1$ is the polar space associated to $\eta$. When $d = 0$ (namely $\eta$ is non-degenerate) and $k = n$, the polar Grassmannian $\cP_{n}$ is usually called \emph{dual polar space}.

If $k < n+d$ then $\cP_k$ is a full-subgeometry of $\cG_k$. In any case, all points of $\cP_k$ are points of $\cG_k$. So,
we can consider the restriction $\varepsilon_{k}:=e_{k}|_{\cP_{k}}$ of the Pl\"ucker embedding $e_{k}$ of $\cG_{k}$
 to $\cP_{k}$. The map $\varepsilon_{k}$ is an embedding of $\cP_{k}$ called \emph{Pl\"ucker (or Grassmann) embedding} of $\cP_{k}$.

 Note that the span $\langle\varepsilon_k(\cP_k)\rangle$ of the $\varepsilon_k$-image $\varepsilon_k(\cP_k)$ of (the point-set of) $\cP_k$
 might not
 coincide with $\PG(\bigwedge^k V)$, although the equality $\langle\varepsilon_k(\cP_k)\rangle = \mathrm{PG}(\bigwedge^kV)$
 holds in several cases, as we shall see in this paper. The \emph{dimension} $\dim(\varepsilon_k)$ of $\varepsilon_k$ is the dimension of the vector subspace of $\bigwedge^kV$ corresponding to $\langle\varepsilon_k(\cP_k)\rangle$.

When $k<n+d$ the embedding $\varepsilon_k$ is \emph{projective}, namely it maps the lines of $\cP_k$ surjectively onto lines of $\PG(\bigwedge^kV)$.
When $k=n+d$ this is not true, except when $d = 0$ and $\eta$ is an alternating form. Indeed when $\eta$ is degenerate ${\cP}_{n+d}$ also admits lines consisting of just one point. Accordingly, when $d > 0$ the geometry
${\cP}_{n+d}$ does not admit any projective embedding. If $\eta$ is non-degenerate but not alternating then $\varepsilon_n$ may map the lines of $\cP_n$ onto proper sublines of lines of $\PG(\bigwedge^kV)$
 (in which case $\varepsilon_n$ is \emph{laxly projective}) or onto conics (when $\varepsilon_n$ is a \emph{veronesean embedding} as
 defined in~\cite{IP13}, also~\cite{Pas13}) or other curves or varieties, depending on $d_0$ and the type of $\eta$.

 When $\eta$ is sesquilinear but not bilinear we can always assume that it is Hermitian. If it is bilinear then it can be either alternating or symmetric. However, if $\eta$ is symmetric we can always replace it with the quadratic form associated to it.
Thus, henceforth, we shall consider only Hermitian, alternating and quadratic forms.

\begin{definition}\label{HSQ}
If  $\eta$ is a Hermitian form, then $\cP_k$ is called \emph{Hermitian} $k$-\emph{Grassmannian}.  We denote it by ${\cH}_k(n,d_0,d;\FF)$, as to have a notation which keeps record of the reduced Witt index $n$ and the anisotropic and singular defects $d_0$ and $d$ of $\eta$.

When $\eta$ is alternating we call $\cP_k$ a \emph{symplectic} $k$-\emph{Grassmannian} and denote it $\cS_k(n,d;\FF)$ (recall that if $\eta$ is alternating then $d_0 = 0$, so we may omit to keep a record of $d_0$). Finally, when $\cP_k$ is associated to a non-degenerate quadratic form, then it is called \emph{orthogonal} $k$-\emph{Grassmannian}. We denote it by $\cQ_k(n,d_0,d;\FF)$.

We will also often use the following shortenings: $\cH_k$ for $\cH_k(n, d_0,d;\FF)$, $\cS_k$ for $\cS_k(n,d;\FF)$ and $\cQ_k$ for $\cQ_k(n,d_0,d;\FF)$. 
\end{definition}

In this paper we shall compute the dimensions of the Grassmann embeddings of $\cH_k(n,d_0,d;\FF)$, $\cS_k(n,d;\FF)$ and $\cQ_k(n,d_0,d;\FF)$ for any $k$, with no hypotheses on either $d$ or $d_0$. Note that, in general, the possible values that the anisotropic defect $d_0$ can take depend on the field $\FF$. For instance if $\FF$ is finite and $\eta$ is Hermitian then $d_0\leq 1$. If $\FF$ is quadratically closed, then a quadratic form defined over $\FF$ has anisotropic defect $d_0 \leq 1$ and if $\FF$ is finite then $d_0\leq 2$. 

\begin{remark}
In the literature the word ``defect'' is sometimes given a meaning different from either of those stated above. Indeed a number of authors use it to denote the defect of the bilinearization of a non-degenerate quadratic form in characteristic $2$. We shall consider this defect in Subsections \ref{Main Results Sec}, \ref{even char} and \ref{Wg}, denoting it by the symbol $d'_0$. Clearly, $d'_0 \leq d_0$.
\end{remark}

\begin{remark}
A number of authors (Tits \cite[Chapter 8]{Tits}, for instance), when dealing with vectors (or subspaces) that are singular (totally singular) for a given sesquilinear form, prefer the word ``isotropic'' rather than ``singular'',  keeping the latter only for pseudoquadratic forms. Other authors (e.g. Buekenhout and Cohen \cite[Chapters 7--10]{BuekC}) use ``singular'' in any case. We have preferred to follow these latter ones.
\end{remark}

\subsection{A survey of known results}\label{Survey}

Before stating our main results,  we provide a brief summary of what is currently known about the dimension of the Grassmann embedding of a
polar $k$-Grassmannian. In this respect, only non-degenerate forms are considered in the literature. So, throughout this subsection we assume $d = 0$.

We consider $\cH_k(n,d_0,0;\FF)$ first. The dimension of the Grassmann embedding of $\cH_k(n,d_0,0;\FF)$ has been proved to be equal to ${N\choose k}$ for $d_0 = 0$ and $k$ arbitrary by de Bruyn~\cite{B10} (see also Blok and Cooperstein~\cite{BC2012}) and for $d_0$ arbitrary and $k = 2$ by Cardinali and Pasini~\cite{IP14}. When $k = 1$ there is nothing to say: $\varepsilon_1$ is just the canonical embedding of the polar space $\cH_1(n,d_0,0;\FF)$ in
$\PG(V)$. As far as we know, the case where $k > 2$ and $d_0 > 0$ has never been considered so far.

It is worth to spend a few words on $\cH_n = \cH_n(n,d_0,0;\FF)$. When $d_0=0$ the Grassmann embedding $\varepsilon_n$ of $\cH_n$ is laxly projective:
it maps the lines of $\cH_n$ onto Baer sublines of $\PG(\bigwedge^nV)$; by replacing $\PG(\bigwedge^nV)$ with a suitable Baer subgeometry containing $\varepsilon_n(\cH_n)$, the embedding $\varepsilon_n$ is turned into a projective embedding (see e.g.~\cite{B10} or  \cite{CS01};
also~\cite[Section 4]{IBP07}). This modification has no effect on the dimension of $\varepsilon_n$, which remains the same. On the other hand, when $d_0>0$ then $\varepsilon_n$ maps the lines of $\cH_n$ onto Hermitian hypersurfaces in $(d_0+1)$-dimensional subspaces of $\PG(\bigwedge^nV)$ (Hermitian plane curves when $d_0=1$). In this case $\cH_n$ does not admit any projective embedding, as it follows from the classification of Moufang quadrangles (Tits and Weiss \cite{TW}).

As for $\cS_k(n,0;\FF)$, it is well known that its Grassmann embedding has dimension ${N\choose k}-{N\choose{k-2}}$, with the usual convention that ${N\choose{-1}} := 0$ in the case $k = 1$; see e.g. see De Bruyn~\cite{B09} or Premet and Suprunenko~\cite{Premet} (also De Bruyn~\cite{B07}, Cooperstein~\cite{C98}).

Let now $\varepsilon_k$ be the Pl\"ucker embedding of $\cQ_k(n,d_0,0;\FF)$. The dimension of $\varepsilon_k$ is known only for $d_0\leq 1$ with the further restriction $k<n$ when $d_0=0$. Indeed, in~\cite{IP13} it is shown that
  \[ \dim(\varepsilon_k)=\begin{cases}
      {{N}\choose{k}} & \text{ if $\ch(\FF)$ is odd and $d_0\leq 1$.} \\
      \binom{N}{k}-\binom{N}{k-2} & \text{ if $\ch(\FF)$ is even and $d_0\leq 1$.}
    \end{cases}\]
The embedding $\varepsilon_n$ of $\cQ_n = \cQ_n(n,d_0,0;\FF)$ also deserves a few comments. When $d_0 = 0$ the lines of $\cQ_n$ are just pairs of points. This case does not look very interesting. Suppose $d_0>0$. Then $\varepsilon_n$ maps the lines of $\cQ_n$ onto non-singular quadrics in $(d_0+1)$-dimensional subspaces of $\PG(\bigwedge^nV)$ (conics for $d_0=1$ and elliptic ovoids for $d_0=2$). If $d_0=1$ then $\cQ_n$ admits the so-called \emph{spin embedding}, which is projective and $2^n$-dimensional. Interesting relations exist between this embedding and $\varepsilon_n$ (see~\cite{IP13}, \cite{IP13bis}; also Section~\ref{Wg k=n} of this paper). Furthermore, still assuming $d_0 = 1$, if $\FF$ is a perfect field of characteristic $2$ then $\cQ_n \cong \cS_n = \cS_n(n,0;\FF)$. In this case the Grassmann embedding of $\cS_n$ yields a projective embedding of $\cQ_n$ which, as proved in~\cite{IP13}, is a quotient of $\varepsilon_n$. A $2^n$-dimensional projective embedding also exists for $\cQ_n$ when $d_0=2$ (see e.g. Cooperstein and Shult~\cite[\S 2.2]{CS01}). It is likely that some relation also exists between this embedding and $\varepsilon_n$ (see Section~\ref{Wg k=n}). If $d_0>2$, then $\cQ_n$ admits no projective embedding, as it follows from~\cite{TW}. 

\subsection{The main results of this paper}\label{Main Results Sec}

In Sections \ref{Hermitian}, \ref{Bsymp} and \ref{orthogonal} of this paper we shall compute the dimension of the Grassmann embedding of a polar $k$-Grassmannian for $k$ not greater than the reduced Witt index $n$ of the associated form but with no restrictions on the anisotropic and singular defects $d_0$ and $d$. As a by-product, we obtain anew the results mentioned in the previous subsection. Explicitly, we prove the following:

\begin{MainTheo}\label{Main Theorem}
Let $V$ be a vector space of finite dimension $N > 1$ over a field $\FF$ and let $n, d_0$ and $d$ be non-negative integers with $n > 1$, $0\leq d, d_0$ and $2n+d_0+d = N$. Let $\eta$ be a Hermitian, alternating or quadratic form defined over $V$, with reduced Witt index $n$, anisotropic defect $d_0$ and singular defect $d$, provided that such a form exists. For $1\leq k \leq n$ let $\cP_k$ be the polar $k$-Grassmannian associated to $\eta$ and $\varepsilon_k$ its Grassmann embedding. Then:
\begin{enumerate}
\item\label{pt1} If $\eta$ is Hermitian then $\dim(\varepsilon_k) = {N\choose{k}}$, namely $\varepsilon_k(\cP_k)$ spans $\mathrm{PG}(\bigwedge^kV)$.
\item\label{pt2} If $\eta$ is alternating then $\dim(\varepsilon_k) = {N\choose k}-{N\choose{k-2}}$.
\item\label{pt3} Suppose $\eta$ to be a quadratic form. Then $\dim(\varepsilon_k) = {N\choose{k}}$ if $\ch(\FF) \neq 2$ and $\dim(\varepsilon_k) = {N\choose{k}} - {N\choose{k-2}}$ if $\ch(\FF) = 2$. In other words, if either $\ch(\FF) \neq 2$ or $k = 1$ then $\varepsilon_k(\cP_k)$ spans $\mathrm{PG}(\bigwedge^kV)$, otherwise $\langle\varepsilon_k(\cP_k)\rangle$ coincides with the span of the image of the Grassmann embedding of the symplectic $k$-Grassmannian associated with the bilinearization of $\eta$.
 \end{enumerate}
\end{MainTheo}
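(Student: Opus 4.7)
The plan is to fix an adapted basis
\[\cB=\{e_1,f_1,\dots,e_n,f_n\}\cup\{a_1,\dots,a_{d_0}\}\cup\{r_1,\dots,r_d\}\]
compatible with the decomposition \eqref{decomposizione di base} and to describe $\langle\varepsilon_k(\cP_k)\rangle$ by comparing it with the induced basis $\{e_S:S\subseteq\cB,\,|S|=k\}$ of $\bigwedge^kV$. For parts \eqref{pt1} and \eqref{pt3} in odd characteristic the target is to show $e_S\in\langle\varepsilon_k(\cP_k)\rangle$ for every such $S$. For parts \eqref{pt2} and \eqref{pt3} in characteristic $2$, I would first establish the upper bound via the contraction $c_\Omega\colon\bigwedge^kV\to\bigwedge^{k-2}V$ against the bivector $\Omega=\sum_{i=1}^n e_i^*\wedge f_i^*$ associated to $\eta$ (or to its bilinearization $b_\eta$ in the quadratic case): totally singular subspaces yield decomposables annihilated by $c_\Omega$, while a direct calculation shows $c_\Omega$ to be surjective, so $\dim\ker c_\Omega=\binom{N}{k}-\binom{N}{k-2}$.

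The lower bound is the core of the work. If $S$ meets each pair $\{e_i,f_i\}$ in at most one element and avoids $V_0$, then $\langle S\rangle$ is totally $\eta$-singular and $e_S\in\varepsilon_k(\cP_k)$ directly; radical factors $r_l$ introduce no obstruction, since $R$ can be adjoined freely to any totally singular subspace, allowing one to reduce the problem to the non-degenerate core. If $S$ contains an anisotropic $a_s\in V_0$, I would pick a hyperbolic index $i$ not occurring in $S$ (available because $k\le n$) and consider singular vectors $u_\pm=a_s\pm\lambda e_i+\mu f_i$ with $\lambda,\mu$ chosen so that $\eta(u_\pm,u_\pm)=0$, respectively $Q(u_\pm)=0$. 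In odd characteristic the identity $2a_s=u_++u_-$ -- realised in the Hermitian case by exploiting the involution and the quadratic extension -- expresses $e_S$ as a combination of $\varepsilon_k$-images; a parallel rewriting handles wedges that meet some $\{e_i,f_i\}$ in both members. An induction on the number of anisotropic entries and on the number of complete hyperbolic pairs contained in $S$ then closes parts \eqref{pt1} and the odd-characteristic subcase of \eqref{pt3}.

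The main obstacle is \eqref{pt3} in characteristic $2$, where the identity $2a_s=u_++u_-$ collapses to $0=0$. Here I would exploit that the bilinearization $b_\eta$ is alternating with radical $V_0\oplus R$, so $b_\eta$ has reduced Witt index $n$ and singular defect $d_0+d$; applying part \eqref{pt2} to $b_\eta$ then furnishes both the upper bound $\binom{N}{k}-\binom{N}{k-2}$ and a symplectic spanning set indexed by $b_\eta$-isotropic subspaces. To realise that dimension inside $\langle\varepsilon_k(\cP_k)\rangle$, each symplectic generator must be produced from $\eta$-singular subspaces; the substitution $a_s\mapsto a_s+e_i+Q(a_s)f_i$ at a fresh index $i$ yields a $Q$-singular replacement, and expanding the modified wedge isolates the desired $e_S$ up to terms with strictly fewer $V_0$-entries, which an induction absorbs. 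The delicate point is keeping the bookkeeping of fresh indices and of the mixed wedges produced by the expansions coherent across all $S$ simultaneously, and verifying that the resulting totally $Q$-singular subspaces span the entire symplectic kernel rather than some proper subspace of it.
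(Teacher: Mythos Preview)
Your overall strategy matches the paper's in spirit: fix an adapted basis, strip off the radical to reduce to the non-degenerate case, and induct on the number of anisotropic entries and full hyperbolic pairs appearing in $S$. The Hermitian case and the odd-characteristic orthogonal case go through essentially as you outline, and the paper's arguments in Sections~\ref{Hermitian} and~\ref{odd char} are close relatives of yours (the paper's trick for $e_i\wedge f_i$ in Lemma~\ref{l2} is exactly the ``parallel rewriting'' you allude to).

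The characteristic-$2$ orthogonal case, however, contains a genuine error. You assert that the bilinearization $b_\eta$ has radical $V_0\oplus R$. This is false in general: over a field of characteristic~$2$ the anisotropic part of a quadratic form can, and typically does, contain pairs $a_{2j-1},a_{2j}$ on which the bilinearization is non-degenerate. For instance $Q(x,y)=x^2+xy+y^2$ over $\FF_2$ is anisotropic, yet its polarization $b(u,v)=u_1v_2+u_2v_1$ has trivial radical. The paper records this by writing the anisotropic part as $q_1+q_2$ (Section~\ref{even char}), where $q_1$ contributes $m$ \emph{further} hyperbolic pairs to $b_\eta$ and only the diagonal piece $q_2$, of dimension $d_0'\le d_0$, lands in $\mathrm{Rad}(b_\eta)$. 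Thus $b_\eta$ has reduced Witt index $n+m$, not $n$, and radical of dimension $d_0'+d$, not $d_0+d$.

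This breaks both halves of your argument when $m>0$. For the upper bound, your $\Omega=\sum_{i=1}^n e_i^*\wedge f_i^*$ is not the bivector of $b_\eta$, so $Q$-singular subspaces need not lie in $\ker c_\Omega$. For the lower bound, the symplectic spanning set you must reproduce is that of the true $b_\eta$, which (in the paper's notation, Section~\ref{Bsymp}) includes vectors $u_{h,i}=e_{2h-1}\wedge e_{2h}+e_{2i-1}\wedge e_{2i}$ with $h>n$ indexing a hyperbolic pair \emph{inside} $V_0$. Your single-vector substitution $a_s\mapsto a_s+e_i+Q(a_s)f_i$ handles the diagonal piece $q_2$ but provides no mechanism for producing these mixed $u_{h,i}$ from $Q$-singular $2$-spaces. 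The paper devotes Lemma~\ref{q-even-d'=0} to exactly this step, constructing explicit $Q$-singular pairs whose wedge isolates $u_{h,i}$ for $h$ in the $q_1$ range; this is the most delicate part of the whole proof, and your proposal does not yet reach it.
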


\begin{remark}
As noticed in Subsection \ref{Survey}, the dual polar space $\cQ_n(n,0,0;\FF)$ is not considered in \cite{IP13}. Part \ref{pt3} of the above theorem includes this case too.
\end{remark}

In Theorem \ref{Main Theorem} we have assumed $k \leq n$, but $n < k \leq n+d$ is also allowed by the definition of polar Grassmannian when $d > 0$. We consider this case in the next corollary, to be proved in Section~\ref{nova Sec}. 

\begin{MainCor}\label{Main Corollary} 
With the notation of Theorem {\rm \ref{Main Theorem}}, let $n < k \leq n+d$.  
\begin{enumerate}
\item\label{cc1} If $\eta$ is Hermitian or quadratic, but with $\ch(\FF) \neq 2$ in the latter case, then 
\[\dim(\langle\varepsilon_k(\cP_k)\rangle) = {N\choose k} -\sum_{i=0}^{k-n-1}{{N-d}\choose{k-i}}{d\choose i},\]
with the usual convention that a binomial coefficient ${m\choose h}$ is $0$ when $h > m$. 
\item\label{cc2}  If $\eta$ is alternating or quadratic, with $\ch(\FF) = 2$ in the latter case, then
\[\dim(\langle\varepsilon_k(\cP_k)\rangle) = {N\choose k} - {N\choose{k-2}} - \sum_{i=0}^{k-n-1}{{N-d}\choose{k-i}}{d\choose i} + \sum_{i=0}^{k-n-1}{{N-d}\choose{k-i-2}}{d\choose i}.\]
\end{enumerate} 
In any case $\langle\varepsilon_k(\cP_k)\rangle$ is a proper subspace of $\mathrm{PG}(\bigwedge^kV)$.
\end{MainCor}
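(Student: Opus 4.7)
The plan is to exploit the descending filtration $F_j:=(\bigwedge^j R)\wedge(\bigwedge^{k-j}V)$ of $\bigwedge^k V$ and track $U:=\langle\varepsilon_k(\cP_k)\rangle$ through it. This filtration is canonical (independent of the choice of a complement of $R$) and its graded quotients are canonically
\[ F_j/F_{j+1}\;\cong\;\bigwedge\nolimits^{j} R\,\otimes\,\bigwedge\nolimits^{k-j}(V/R). \]
The key geometric input is that every totally singular $k$-subspace $W\subseteq V$ satisfies $\dim(W\cap R)\geq k-n$, because $W/(W\cap R)$ embeds as a totally singular subspace of the non-degenerate quotient $V/R$, whose reduced Witt index is $n$. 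Hence $\varepsilon_k(W)\in F_{\dim(W\cap R)}$ and in particular $U\subseteq F_{k-n}$.

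The central step is to show that, for each $j$ with $k-n\leq j\leq\min(k,d)$, the image of $U\cap F_j$ in $F_j/F_{j+1}$ is exactly $\bigwedge^j R\otimes E_{k-j}'$, where $E_{k-j}':=\langle\bar\varepsilon_{k-j}'(\cP_{k-j}')\rangle$ denotes the span of the Grassmann embedding of the polar $(k-j)$-Grassmannian of the non-degenerate space $V/R$. For the lower bound ($\supseteq$), fix a $j$-dimensional subspace $X\subseteq R$ with basis $x_1,\dots,x_j$ and a totally singular $(k-j)$-dimensional subspace $\bar W'\subseteq V/R$; pick any lift $\widetilde W\subseteq V$ of $\bar W'$ transverse to $R$ and set $W:=X\oplus\widetilde W$. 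A direct check using $R\subseteq\mathrm{Rad}(b_\eta)$ shows that $W$ is totally singular of dimension $k$ with $W\cap R=X$, and the canonical identification above carries $\varepsilon_k(W)\bmod F_{j+1}$ to $(x_1\wedge\cdots\wedge x_j)\otimes\bar\varepsilon_{k-j}'(\bar W')$. Letting $X$ and $\bar W'$ range yields the inclusion $\supseteq$.

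For the matching upper bound I would split into the two cases. In case \ref{cc1} the Main Theorem gives $E_{k-j}'=\bigwedge^{k-j}(V/R)$, so each graded piece is exhausted and a reverse induction on $j$ yields $U=F_{k-n}$. In case \ref{cc2} I would use the contraction $c:\bigwedge^k V\to\bigwedge^{k-2}V$ with respect to the alternating form (respectively, the bilinearization $b_\eta$ of $\eta$ in the even-characteristic quadratic subcase). Since every totally singular $W$ is also totally isotropic for this bilinear form, $c(\varepsilon_k(W))=0$ and hence $U\subseteq\ker c$. Because $R$ lies in the radical of the bilinear form, $c$ preserves $F_\bullet$ and induces on each quotient the operator $\mathrm{id}_{\bigwedge^j R}\otimes\bar c_{k-j}$, where $\bar c_{k-j}$ is contraction on $\bigwedge^{k-j}(V/R)$ with the descended form. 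A dimension count (which in the quadratic subcase reduces via the radical of the descended bilinearization to the non-degenerate alternating situation, where Vandermonde gives $\binom{N-d}{k-j}-\binom{N-d}{k-j-2}$) combined with the inclusion $E_{k-j}'\subseteq\ker\bar c_{k-j}$ and the Main Theorem yields $\ker\bar c_{k-j}=E_{k-j}'$, so the image of $U\cap F_j$ is contained in $\bigwedge^j R\otimes E_{k-j}'$ as required.

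Summing the contributions of the graded pieces gives
\[ \dim U=\sum_{j=k-n}^{\min(k,d)}\binom{d}{j}\,\dim E_{k-j}', \]
and substituting $\dim E_{k-j}'$ from the Main Theorem together with the Vandermonde identity $\binom{N}{k}=\sum_j\binom{d}{j}\binom{N-d}{k-j}$ (and its analogue with $k$ replaced by $k-2$) produces the two formulas of the corollary. Properness of $U$ in $\bigwedge^k V$ is then immediate, since for $k>n$ the subtracted sum contains the strictly positive term $\binom{N-d}{n+1}\binom{d}{k-n-1}$ (note $N-d=2n+d_0\geq n+1$ because $n>1$, and $k-n-1\leq d-1$ because $k\leq n+d$). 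The main technical obstacle I anticipate is the upper bound in case \ref{cc2}: correctly handling the interplay between the filtration by $R$, the descent to $V/R$, and the possibly degenerate bilinearization that appears in characteristic $2$.
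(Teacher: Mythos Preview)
Your approach is correct, but it takes a more elaborate route than the paper. The paper does not use the filtration $F_j$ at all; instead it fixes once and for all a complement $\overline{V}$ of $R$ (as in the decomposition~\eqref{decomposizione di base}) and works with the resulting \emph{direct sum} splitting $\bigwedge^kV=\bigoplus_i\bigwedge^{k-i}\overline{V}\wedge\bigwedge^iR$. With that choice, Lemma~\ref{prel novo 1} extends verbatim to $n<k\leq n+d$ (one only restricts the summation to $i\geq k-n$, since $\overline{\eta}$ has Witt index $n$), yielding directly
\[
\langle\PP_k\rangle=\bigoplus_{i=k-n}^{\min(d,k)}\langle\overline{\PP}_{k-i}\rangle\wedge\bigwedge^iR,
\]
after which one substitutes the values of $\dim\langle\overline{\PP}_{k-i}\rangle$ from the Main Theorem and applies Vandermonde. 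The point is that the upper bound in case~\ref{cc2} is no harder than in case~\ref{cc1}: expanding $w_1\wedge\cdots\wedge w_k$ along $w_l=\overline{w_l}+w_l^R$ produces terms whose $\overline{V}$-part is a wedge of a subset of $\{\overline{w_1},\dots,\overline{w_k}\}$, and any such subset spans a totally $\overline{\eta}$-singular subspace because singularity and orthogonality descend to $\overline{V}\cong V/R$. So no contraction operator is needed.

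Your filtration argument has the conceptual advantage of being canonical (independent of the choice of $\overline{V}$), and the contraction map gives an intrinsic reason why the symplectic/even-orthogonal span lands where it does; but this comes at the cost of the extra work you flag as the ``main technical obstacle''---checking that $\ker\bar c_{k-j}$ has the right dimension when $f_{\bar\eta}$ is degenerate. That check does go through (split off $\mathrm{Rad}(f_{\bar\eta})$ inside $V/R$ and apply Vandermonde once more), but the paper's splitting makes the whole issue disappear.
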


\begin{remark}
  A linear system of ${N\choose{k-2}}$ equations is proposed in~\cite[Section 4]{Pas16} which, combined with the (non-linear) equations describing the Grassmann variety $e_k(\cG_k)$, characterizes the variety $\bS_k = \varepsilon_k(\cS_k)$. It is asked in~\cite{Pas16} whether those linear equations are linearly independent and if they characterize $\langle\bS_k\rangle$. In general, the answer to either of these questions is negative. In view of Corollary~\ref{Main Corollary}, the answer is certainly negative when $k > n$. However, it is negative even if $k \leq n$ (in particular, when $d = 0$). For instance, let $d = 0$. Then if $k \leq 3$ those equations are independent, whence they indeed describe $\langle\bS_k\rangle$; perhaps  they are independent for any $k \leq n$
when $\ch(\FF) \neq 2$,
but when $\ch(F) = 2$ and $k > 3$ they are dependent, as one can see by a straightforward check. So, here is one more problem: find a linear system that describes $\langle\bS_k\rangle$.
\end{remark}

We now turn to the second problem studied in this paper. Let $\cQ_k = \cQ_k(n,d_0,0;\FF)$ be a non-degenerate orthogonal $k$-Grassmannian with $k < n$. As $k < n$, the Grassmann embedding $\varepsilon_k$ of $\cQ_k$ is projective. Assume moreover that $\ch(\FF) = 2$. So $\dim(\varepsilon_k)  = {N\choose{k}} - {N\choose{k-2}}$ by Theorem \ref{Main Theorem}. As proved in \cite{IP13}, if $k > 1$ and $d_0 \leq 1$ then $\varepsilon_k$ is a proper quotient of an ${N\choose k}$-dimensional projective embedding $e^W_k$ of $\cQ_k$, which in~\cite{IP13} is called \emph{Weyl embedding} and lives in the Weyl module $W(\mu_k)$ for the Chevalley group $G = \mathrm{O}(N,\FF)$ (when $d_0 = 1$) or $G = \mathrm{O}^+(N,\FF)$ (when $d_0 = 0$), for a suitable weight $\mu_k$ of the root system of $G$ (see e.g.~\cite{H72} or~\cite{S67} for these notions). Explicitly, $\mu_k$ is the $k$-th fundamental weight $\lambda_k$ except when $k = n-1$ and $d_0 = 0$. In the latter case $\mu_{n-1} = \lambda_{n-1}+\lambda_n$. We refer to~\cite{IP13} (also~\cite{IP14b}) for more information on $e^W_k$. We only recall here that the Weyl embedding $e^W_k$ also exists when $\ch(\FF) \neq 2$ and when $k = 1$, however $e^W_k \cong \varepsilon_k$ for any $k < n$ when $\ch(\FF) \neq 2$ (as proved
in~\cite{IP13}) and $e^W_1 \cong \varepsilon_1$ whatever $\ch(\FF)$ is. Note also that, in any case, $\dim(W(\mu_k)) = {N\choose k}$. So, $e^W_k(\cQ_k)$ spans $\PG(W(\mu_k))$.

It is natural to ask whether an analogue of the Weyl embedding can be defined when $d_0 > 1$. In the last section of this paper we propose such a generalization, but only for orthogonal Grassmannians associated to absolutely non-degenerate quadratic forms.

We recall that a non-degenerate quadratic form $q:V\rightarrow\FF$ is said to be \emph{absolutely non-degenerate} if its natural extension $\overline{q}:\overline{V}\rightarrow\overline{\FF}$ to $\overline{V} := V\otimes\overline{\FF}$, where $\overline{\FF}$ is the algebraic closure of $\FF$, is still non-degenerate. When $\ch(\FF) \neq 2$ this notion is devoid of interest: in this case all non-degenerate quadratic forms are absolutely non-degenerate. On the other hand, when $\ch(\FF) = 2$, let $f_q$ be the bilinearization of $q$ and $d'_0 := \dim(\mathrm{Rad}(f_q))$. Then $\dim(\mathrm{Rad}(\overline{q})) = \max(0, d'_0-1)$. So, $q$ is absolutely non-degenerate if and only if $d'_0 \leq 1$.

Let $q$ be absolutely non-degenerate with anisotropic defect $d_0$. Then, as we shall prove in Section~\ref{Wg}, the field $\FF$ admits an algebraic extension $\widehat{\FF}$
such that the extension $\widehat{q}:\widehat{V}\rightarrow\widehat{\FF}$ of $q$ to $\widehat{V} = V\otimes\widehat{\FF}$ is non-degenerate with anisotropic defect $\mathrm{def}_0(\widehat{q}) = d''_0 \leq 1$, where $d''_0 = d'_0$ when $\ch(\FF) = 2$ while, if $\ch(\FF) \neq 2$, then $d''_0 = 0$ or $1$ according to whether $N$ is even or odd. In any case, keeping the hypothesis $k < n$, the $k$-Grassmannian $\widehat{\cQ}_k$ associated to $\widehat{q}$ admits the Weyl embedding $\widehat{e}^W_k:\widehat\cQ_k\rightarrow\mathrm{PG}(\widehat{V}^W_k)$, where $\widehat{V}^W_k$ is the appropriate Weyl module. Clearly, the orthogonal $k$-Grassmannian $\cQ_k$ associated to $q$ is a subgeometry of $\widehat\cQ_k$, $\PG(\bigwedge^kV)$ is a subgeometry of $\PG(\bigwedge^k\widehat{V})$ and the Grassmann embedding $\widehat{\varepsilon}_k:\widehat\cQ_k\rightarrow\mathrm{PG}(\bigwedge^k\widehat{V})$ of $\widehat\cQ_k$ induces on $\cQ_k$ its Grassmann embedding $\varepsilon_k:\cQ_k\rightarrow\mathrm{PG}(\bigwedge^kV)$.

The following theorem will be proved in Section~\ref{Wg}. In order to make its statement a bit shorter, we take the liberty of using the symbols $\langle\widehat{\varepsilon}_k(\widehat\cQ_k)\rangle$ and $\langle\varepsilon_k(\cQ_k)\rangle$, which actually stand for subspaces of $\PG(\bigwedge^k\widehat{V})$ and $\PG(\bigwedge^kV)$ respectively, to denote the corresponding vector subspaces of $\bigwedge^k\widehat{V}$ and $\bigwedge^kV$.

\begin{MainTheo}\label{MainTheorem bis}
Let $q$ be absolutely non-degenerate, $k < n$ and let $\widehat\cQ_k$, $\widehat{V}$, $\widehat{V}^W_k$, $\widehat{\varepsilon}_k$ and $\widehat{e}^W_k$ be as defined above. Then the Weyl module $\widehat{V}^W_k$, regarded as an $\FF$-space, contains an $\FF$-subspace $V^W_k$ such that:
\begin{enumerate}
\item The Weyl embedding $\widehat{e}^W_k$ induces on $\cQ_k$ a projective embedding $e^W_k:\cQ_k\rightarrow\PG(V^W_k)$. Moreover $e^W_k(\cQ_k)$ spans $V^W_k$.
\item The (essentially unique) morphism $\widehat{\varphi}:\widehat{V}^W_k\rightarrow\langle\widehat{\varepsilon}_k(\widehat\cQ_k)\rangle$ from the Weyl embedding $\widehat{e}^W_k$ to the Grassmann embedding $\widehat{\varepsilon}_k$ of $\widehat\cQ_k$ maps $\widehat{V}^W_k$ onto $\langle\widehat{\varepsilon}_k(\widehat\cQ_k)\rangle$ and induces a morphism from $e^W_k$ to the Grassmann embedding $\varepsilon_k$ of $\cQ_k$.
\item If $\widehat{\varphi}$ is an isomorphism (which is the case precisely when either $\ch(\FF) \neq 2$ or $k = 1$) then $\varphi$ also is an isomorphism.
\end{enumerate}
\end{MainTheo}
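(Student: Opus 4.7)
The strategy I would adopt is one of \emph{Galois descent}: realise both $\varepsilon_k$ and the prospective $e^W_k$ as restrictions of $\widehat\varepsilon_k$ and $\widehat e^W_k$ to a canonical $\FF$-form of $\widehat V^W_k$. I would fix a Kostant (Chevalley) $\mathbb{Z}$-form $V^{W,\mathbb{Z}}_k\subset\widehat V^W_k$ of the Weyl module $W(\mu_k)$ (see \cite{H72}, \cite{S67}) and set
\[
V^W_k := V^{W,\mathbb{Z}}_k\otimes_{\mathbb{Z}}\FF\ \subset\ V^{W,\mathbb{Z}}_k\otimes_{\mathbb{Z}}\widehat\FF=\widehat V^W_k.
\]
The Chevalley group being $\mathbb{Z}$-defined, the subgroup $G(\FF)=\mathrm{O}(q)$ of $G(\widehat\FF)=\mathrm{O}(\widehat q)$ preserves $V^W_k$. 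A highest weight vector $v_0\in V^{W,\mathbb{Z}}_k$ spans a projective point whose stabiliser in $G$ is the maximal parabolic associated with $\mu_k$, so $\langle v_0\rangle=\widehat e^W_k(p_0)$ for some $p_0\in\cQ_k$, i.e.\ a totally singular $k$-subspace of $V$ defined over $\FF$.

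For Part~1, Witt's extension theorem applied to the non-degenerate form $q$ gives transitivity of $G(\FF)$ on the points of $\cQ_k$. Hence every $p\in\cQ_k$ has the form $p=gp_0$ with $g\in G(\FF)$, and $\widehat e^W_k(p)=\langle gv_0\rangle\in\PG(V^W_k)$. This makes $e^W_k:=\widehat e^W_k|_{\cQ_k}\colon \cQ_k\to\PG(V^W_k)$ well defined. That $e^W_k$ is a projective embedding follows from $\widehat e^W_k$ being projective, combined with the fact that each line of $\cQ_k$ is the $\FF$-rational part of an $\widehat\FF$-line of $\widehat\cQ_k$ joining the same $\FF$-rational endpoints. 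The spanning property $\langle e^W_k(\cQ_k)\rangle_{\FF}=V^W_k$ is then the standard cyclicity of the Weyl module ($V^W_k$ is generated by $v_0$ under $G(\FF)$), combined with the transitivity just noted.

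For Part~2, the morphism $\widehat\varphi$ sends each representative of $\widehat e^W_k(p)$ to a scalar of $\widehat\varepsilon_k(p)$. For $p\in\cQ_k$, $\widehat\varepsilon_k(p)=\varepsilon_k(p)\in\bigwedge^kV$; hence $\widehat\varphi$ maps the $\FF$-spanning set $\{gv_0: g\in G(\FF)\}$ of $V^W_k$ into $\langle\varepsilon_k(\cQ_k)\rangle_{\FF}$, yielding a surjective $\FF$-linear map $\varphi:=\widehat\varphi|_{V^W_k}\colon V^W_k\to\langle\varepsilon_k(\cQ_k)\rangle$ which, on the level of point sets, intertwines $e^W_k$ with $\varepsilon_k$ and so is a morphism of projective embeddings. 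Part~3 is then immediate: the restriction of an injective linear map to any subspace is still injective, and combined with the surjectivity established in Part~2 this makes $\varphi$ an isomorphism.

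The principal technical obstacle is Step~1, namely the correct identification of $\langle v_0\rangle$ with a point of $\cQ_k$, which amounts to matching the stabiliser of $v_0$ with the maximal parabolic of $\mathrm{O}(q)$ fixing a totally singular $k$-subspace. This is transparent when $\mu_k=\lambda_k$, but needs extra bookkeeping in the exceptional case $k=n-1$, $d_0=0$ where $\mu_{n-1}=\lambda_{n-1}+\lambda_n$, and also in characteristic $2$ where the Chevalley group structure is more delicate. The transitivity of $G(\FF)$ on $\cQ_k$ (Witt) and the cyclicity of Weyl modules under Chevalley group actions are classical, but both merit explicit citation.
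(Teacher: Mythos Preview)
Your Galois-descent plan has a genuine gap at its very first step. You set $V^W_k := V^{W,\mathbb{Z}}_k\otimes_{\mathbb{Z}}\FF$ using the Kostant $\mathbb{Z}$-form of the Weyl module, and then claim that ``the subgroup $G(\FF)=\mathrm{O}(q)$ of $G(\widehat\FF)=\mathrm{O}(\widehat q)$ preserves $V^W_k$''. But the Kostant $\mathbb{Z}$-form belongs to the \emph{split} Chevalley group scheme; its $\FF$-points are the orthogonal group of a form over $\FF$ of \emph{maximal} Witt index $n+m$ (where $2m=d_0-d'_0$), not of $q$, whose Witt index is only $n$. In the only interesting case $d_0>1$ the group $\mathrm{O}(q)$ is a non-split twisted $\FF$-form of $\mathrm{O}(\widehat q)$, so it does not stabilise $V^{W,\mathbb{Z}}_k\otimes\FF$, the highest-weight line $\langle v_0\rangle$ is not a point of $\cQ_k$, and the orbit $\{gv_0:g\in\mathrm{O}(q)\}$ need not sit inside your $V^W_k$ at all. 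Concretely, in the paper's Lemma~\ref{l:qc} the basis $\widehat{E}$ that puts $\widehat q$ in standard Chevalley form already involves scalars $\alpha_i,\beta_i,\gamma_i\in\widehat\FF\setminus\FF$; the $\FF$-structure on $V$ relevant to $q$ is simply not the one coming from $\widehat{E}$, hence not the one underlying the Kostant lattice.

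A further symptom that your $V^W_k$ is the wrong object: your definition forces $\dim_\FF V^W_k=\binom{N}{k}$, whereas the paper shows (and this is one of the points of the construction) that the correct $V^W_k$ can have $\FF$-dimension strictly larger than $\binom{N}{k}$, up to $\binom{N}{k}+\binom{N}{k-2}(g-1)$ with $g=|\widehat\FF:\FF|$. The paper's route avoids all Chevalley bookkeeping: it proves an elementary unique-lifting lemma (Lemma~\ref{uniq}) saying that through the morphism $\widehat\varphi:\widehat V^W_k\to\langle\widehat\varepsilon_k(\widehat\cQ_k)\rangle$ every nonzero vector $v\in\varepsilon_k(p)$ with $p\in\cQ_k$ has a \emph{unique} preimage $\widetilde v\in\widehat e^W_k(p)$; then $V^W_k$ is simply the $\FF$-span of all such $\widetilde v$. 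That $e^W_k$ is projective and that $\widehat\varphi$ restricts to a morphism $\varphi$ are then short linear-algebra checks (Theorem~\ref{mainlift}). To repair your approach you would need to replace the naive $\mathbb{Z}$-form by the Galois twist determined by the cocycle defining $\mathrm{O}(q)$ inside $\mathrm{O}(\widehat q)$, and handle the possibly inseparable or non-Galois nature of $\widehat\FF/\FF$; this is substantially more work than the paper's direct lifting argument.
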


We call $e^W_k$ the \emph{Weyl-like} embedding of $\cQ_k$ (Subsection~\ref{Wg}, Definition~\ref{Weyl-like}). Needless to say, when $d_0 \leq 1$ then $e^W_k$ is just the Weyl embedding of $\cQ_k$.
Clearly, when $\varphi$ is an isomorphism, the Weyl-like embedding $e^W_k \cong\varepsilon_k$ is ${N\choose k}$-dimensional. Otherwise, as we shall prove in Subsection~\ref{Wg},
\[{N\choose k}-{N\choose{k-2}} ~ \leq ~ \dim(e_k^W) ~ \leq ~ {N\choose k}+ {N\choose{k-2}}(g-1)\]
where $g := |\widehat{\FF}:\FF|$. As we shall see in Subsection~\ref{Wg}, we can always choose $\widehat{\FF}$ in such a way that $g \leq \max(1, d_0-d'_0)$ (note that we are assuming $\ch(\FF) = 2$, otherwise $\varphi$ is an isomorphism). However, even with $g \leq \max(1, d_0-d'_0)$, the above bounds are likely to be rather lax. We leave the task of improving them for a future work.

\medskip

To finish, we mention an important problem which stands in the background of this paper: under which conditions the embeddings considered in this paper are universal? Of course, this question makes sense only if universality can be defined in a sensible way for the family of embeddings we consider, as when they are projective (but not only in that case). Apart from the trivial case of $k = 1$, where $\varepsilon_1$ is just the canonical embedding of the polar space ${\cP} = {\cP}_1$, which is indeed universal except when ${\cP}$ is symplectic and $\ch(\FF) = 2$, sticking to non-degenerate cases, a clear answer was known only for $\cH_k(n,0,0;\FF)$ and $\cS_k(n,0;\FF)$: the Grassmann embedding of $\cH(n,0,0;\FF)$ is universal provided that $|\FF| > 4$ and that of $\cS_k(n,0;\FF)$ is universal provided that $\ch(\FF) \neq 2$ (Blok and Cooperstein~\cite{BC2012}). Partial answers for $\cQ_k(n,d_0,0;\FF)$ with $d_0 \leq 1$ are also known, which might suggest that $e^W_k$ is universal when $k < n$ and $\varepsilon_n$ is universal when $\ch(\FF)\neq 2$  (see e.g. \cite[Theorem 1.5]{IP13} for $1 < k \leq 3$, $k < n$ and \cite[Theorem 5]{IP13bis} for $k = n = 2$).
In a recent paper~\cite{ILP19}, the authors have investigated
the generating rank of polar Grassmannians; in particular, for
$\cH_k(n,d_0,0;\FF)$ with $d_0 \geq 0$ and $k<n$ it is shown that
the Grassmann embedding of $\cH_k$ is universal; see~\cite[Corollary 2]{ILP19}.
For $k=2$, and $k=3<n$ for $d_0\leq1$, the Grassmann embedding of $\cQ_k(n,d_0,0,\FF)$ is universal; see~\cite{IP13}.
Very little is known
for $\cQ_k(n,d_0,0;\FF)$ with $d_0 > 1$ or $k>2$.
However, we are not going to further address this problem in the present paper.

\paragraph{Structure of the paper}
In Section~\ref{preliminaries} we set some notation and prove some preliminary general results. Parts \ref{pt1}, \ref{pt2} and \ref{pt3} of Theorem \ref{Main Theorem} will be proved in Sections \ref{Hermitian},
\ref{Bsymp} and \ref{orthogonal} respectively. Finally, in Section~\ref{Weyl generalizzato} we propose a general definition of \emph{liftings} of embeddings and use this notion to prove Theorem~\ref{MainTheorem bis}.

\section{Preliminaries}\label{preliminaries}

Let $V:=V(N,\FF)$ be a vector space of dimension $N$ over a field $\FF$. Let $E:=(e_i)_{i=1}^N$ be a given basis of $V$.
For any set $J=\{j_1,\dots,j_h\}$ of indexes with $1\leq j_i\leq N$ denote by $V_J$ the subspace of $V$ generated by $E_J:=(e_{j_1},\dots,e_{j_h})$.

We shall write in brief $V_k:=\bigwedge^kV$ and $V_{J,k}:=\bigwedge^kV_J$. It is well known that a basis $E_k$ of $V_k$ is given by all vectors of
the form $e_T:=e_{t_1}\wedge e_{t_2}\wedge\dots\wedge e_{t_k}$ with $t_1<t_2<\dots<t_k$ where $T=\{t_1,\dots,t_k\}$ varies among all $k$-subsets of $\{1,\dots,N\}$. Consistently with the notation introduced above we shall write $E_{J,k}$ for the basis of $V_{J,k}$ induced by $E_J$.

Given a Hermitian, alternating, or quadratic form $\eta$ defined over $V$, for any $J\subseteq\{1,2,\dots,N\}$ let $\eta_J$ be the restriction of $\eta$ to $V_J$. A $k$-Grassmannian associated to $\eta_J$ will be denoted by $\cP_{J,k}$. We shall write the image of  $\cP_k$ under its Grassmann embedding $\varepsilon_k$ as:
\[\PP_k:=\{\varepsilon_k(X)\colon X \mbox{ is a point  of }\cP_k\}\subseteq \PG(V_k).\]
According to the notation introduced above we  put
\[\PP_{J,k}:=\{\varepsilon_k(X)\colon X \mbox{ is a point of }\cP_{J,k}\}\subseteq \PG(V_{J,k}).\]
In particular,
\[\begin{array}{l@{}c@{}l@{}l}
\HH_{J,k}&:=&\{\varepsilon_k(X)\colon X \mbox{\rm\ is a point of }\cH_{J,k}&\},\\
\mathbb{S}_{J,k}&:=&\{\varepsilon_k(X)\colon X \mbox{\rm\ is a point of }\cS_{J,k}&\},\\
    \QQ_{J,k}&:=&\{\varepsilon_k(X)\colon X \mbox{\rm\ is a point of }\cQ_{J,k}&\},\\
  \end{array}\]
for respectively the image of  Hermitian, alternating and orthogonal Grassmannians. If $J=\{1,\dots, N\}$ then $\HH_{J,k}$, $\mathbb{S}_{J,k}$  and $\QQ_{J,k}$ will just  be denoted by $\HH_{k}$, $\mathbb{S}_k$ and $\QQ_{k}$ respectively. We have defined  $\PP_k$ and $\PP_{J,k}$ as sets of points in $\PG(V_k)$ and $\PG(V_{J,k})$. In the sequel,
with some abuse of notation, we shall often take the liberty to regard them also as sets of vectors of respectively $V_k$ and $V_{J,k}$,
 implicitly replacing $\{\varepsilon_k(X) : X \mbox{ is a point of } \cP_k \}$ with $\{v\in \varepsilon_k(X) : X \mbox{ is a point of } \cP_k \}$ and
similarly for $\PP_{J,k}$. With these conventions, we can define
\[ \PP_{{I},h}\wedge\PP_{{J},k}:=\{\langle v\wedge w\rangle \colon \langle v\rangle\in \PP_{{I},h} \mbox{\rm\  and } \langle w\rangle\in \PP_{{J},k} \}\subseteq\PG(V_{I\cup J,h+k}). \]
We always regard $\langle\PP_k\rangle$ and $\langle\PP_{J,k}\rangle$ as subspaces of $V_k$ and $V_{J,k}$ respectively (as we did in the Introduction when defining $\dim(\varepsilon_k)$).

\subsection{Orthogonal decompositions}

As above, for $I, J \subseteq \{1, 2,\dots, n\}$ let $\eta_I$ and $\eta_J$ be the forms induced by $\eta$ on $V_I$ and $V_J$ respectively.  We put $d_I := \mathrm{def}(\eta_I)$, $d_J := \mathrm{def}(\eta_J)$ and we denote by $n_I$ and $n_J$ the reduced Witt index of $\eta_I$ and $\eta_J$.

\begin{lemma}\label{l11}
Suppose that $I\cap J = \emptyset$ and $V_I\perp V_J$ with respect to $\eta$ (or its bilinearization if $\eta$ is quadratic). Then, for $1 \leq h \leq n_I+d_I$ and $1\leq k\leq n_J+d_J$ we have $\PP_{{I},h}\wedge\PP_{{J},k}\subseteq\PP_{{I}\cup {J},h+k}$ and $\langle\PP_{I,h}\rangle\wedge\langle\PP_{J,k}\rangle\subseteq \langle\PP_{I\cup J,h+k}\rangle$.
\end{lemma}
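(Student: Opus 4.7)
The plan is to verify both inclusions at the level of decomposable (``pure'') vectors, from which the general case will follow by bilinearity of $\wedge$.

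For the first inclusion, I would pick representatives: a point of $\PP_{I,h}$ is $\langle v_1\wedge\cdots\wedge v_h\rangle$ where $X:=\langle v_1,\dots,v_h\rangle$ is a totally $\eta$-singular $h$-subspace of $V_I$, and similarly a point of $\PP_{J,k}$ corresponds to a totally $\eta$-singular $k$-subspace $Y=\langle w_1,\dots,w_k\rangle$ of $V_J$. The wedge $(v_1\wedge\cdots\wedge v_h)\wedge(w_1\wedge\cdots\wedge w_k)$ is the Plücker image of $X+Y$, which is a subspace of $V_{I\cup J}$. The key step is to check that $X+Y$ has dimension $h+k$ and is totally $\eta$-singular. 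The dimension claim is immediate since $I\cap J=\emptyset$ forces $V_I\cap V_J=0$, so $X\cap Y=0$.

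Total singularity splits into three sub-checks. First, every vector of $X$ is singular (by hypothesis on $X$), and similarly for $Y$. Second, $X$ and $Y$ are mutually orthogonal, because $V_I\perp V_J$ with respect to $\eta$ (or its bilinearization). Third, a general vector $x+y\in X+Y$ is singular: in the Hermitian or alternating case this is because $\eta(x+y,x+y)=\eta(x,x)+\eta(x,y)+\eta(y,x)+\eta(y,y)=0$, using singularity of $x,y$ and orthogonality $V_I\perp V_J$; in the quadratic case, $\eta(x+y)=\eta(x)+\eta(y)+f_\eta(x,y)=0$ where $f_\eta$ is the bilinearization, using exactly the same facts (this is the one place where the parenthetical hypothesis ``or its bilinearization'' in the statement is used). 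Hence $X+Y$ is a point of $\cP_{I\cup J,h+k}$ and therefore $(v_1\wedge\cdots\wedge v_h)\wedge(w_1\wedge\cdots\wedge w_k)\in\PP_{I\cup J,h+k}$, proving the first inclusion.

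For the second inclusion, I would just invoke bilinearity of $\wedge$. By definition, $\langle\PP_{I,h}\rangle\wedge\langle\PP_{J,k}\rangle$ is the $\FF$-span of all wedges $v\wedge w$ with $v\in\langle\PP_{I,h}\rangle$ and $w\in\langle\PP_{J,k}\rangle$. Writing $v=\sum_i\alpha_iv^{(i)}$ and $w=\sum_j\beta_jw^{(j)}$ with $v^{(i)}\in\PP_{I,h}$ and $w^{(j)}\in\PP_{J,k}$ (here I use the same abuse of notation as the paper, identifying projective points with representatives), bilinearity gives $v\wedge w=\sum_{i,j}\alpha_i\beta_j(v^{(i)}\wedge w^{(j)})$, and each summand lies in $\PP_{I\cup J,h+k}\subseteq\langle\PP_{I\cup J,h+k}\rangle$ by the first part.

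The only genuinely non-trivial step is the total singularity of $X+Y$, and the main thing to be careful about is the quadratic case in characteristic $2$, where one must not conflate the form with its bilinearization; once the parenthetical hypothesis is used correctly there, no real obstacle remains.
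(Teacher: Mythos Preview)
Your proof is correct and follows essentially the same approach as the paper's: pick totally singular subspaces $X\subseteq V_I$ and $Y\subseteq V_J$, use $I\cap J=\emptyset$ to get $\dim(X+Y)=h+k$, use $V_I\perp V_J$ to conclude $X+Y$ is totally singular, and then observe that the span inclusion is immediate. Your version is in fact more explicit than the paper's, which simply asserts ``Hence the space $X_v+X_w$ is totally singular'' without separating out the quadratic case or spelling out the bilinearity argument for the second inclusion.
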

\begin{proof} Take $\langle v\rangle\in\PP_{I,h}$ and $\langle w\rangle\in\PP_{J,k}$.
Since $V_I$  and $V_J$ are orthogonal by hypothesis, the $h$-dimensional vector space $X_v:=\varepsilon_h^{-1}(v)$ and the $k$-dimensional vector space $X_w:=\varepsilon_k^{-1}(w)$  are mutually orthogonal. Hence the space $X_v+X_w$ is totally singular and it has dimension $h+k$ (recall that $V_I\cap V_J = \{0\}$ as $I\cap J = \emptyset$ by assumption). So,
$\langle v\wedge w\rangle=\varepsilon_{h+k}(\langle X_v,X_w\rangle)\in\PP_{I\cup J,h+k}$. The condition on the linear spans is now immediate.
\end{proof}
In the hypotheses of Lemma~\ref{l11} the form $\eta_{I\cup J}$ induced by $\eta$ on $V_{I\cup J}$ is the orthogonal sum of $\eta_I$ and $\eta_J$. Accordingly, $d_I+d_J = \mathrm{def}(\eta_{I\cup J})$, namely $\mathrm{Rad}(\eta_{I\cup J}) = \mathrm{Rad}(\eta_I)\oplus\mathrm{Rad}(\eta_J)$. Moreover $n_I+n_J$ is the reduced Witt index of $\eta_{I\cup J}$. Clearly, $n_I+n_J \leq n$ but no relation can be stated between $d_I+d_J$ and $d$ in general. Indeed, although we always have $V_{I\cup J}\subseteq\mathrm{Rad}(\eta_{I\cup J})^\perp$, in general  $\mathrm{Rad}(\eta_{I\cup J})^\perp \subset V$.

\subsection{Reduction to the non-degenerate case}

As above, let $\eta$ be a Hermitian, alternating, or quadratic form defined over $V$, with $\dim(V) = N$. Let $R := \mathrm{Rad}(\eta)$, $d = \dim(R) = \mathrm{def}(\eta)$ and let $n$ be the reduced Witt index of $\eta$.

With $V_0, V_1,\dots, V_n$ as in decomposition \eqref{decomposizione di base}, let $\overline{V} := V_0\oplus V_1\oplus \dots \oplus V_n$ and let $\overline{\eta}$ be the form induced by $\eta$ on $\overline{V}$. Note that $\dim(\overline{V}) = 2n+d_0 = N-d$, the form $\overline{\eta}$ is non-degenerate and it is isomorphic to the reduction of $\eta$, namely the form induced by $\eta$ on $V/R$.

Given $k\leq n$, for $1\leq j\leq k$ the polar $j$-Grassmannian $\overline{\cP}_j$ associated to $\overline{\eta}$ is a full subgeometry of $\cP_j$. Its $\varepsilon_j$-image $\overline{\PP}_j = \varepsilon_j(\overline{\cP}_j)$ is contained in $\overline{V}_j := \bigwedge^j\overline{V}$, which is a subspace of $V_j$.

\begin{lemma}\label{prel novo 1}
For $1\leq k\leq n$ we have
\[\langle\PP_k\rangle = \bigoplus_{i=0}^{\min(d,k)}\langle\overline{\PP}_{k-i}\rangle\wedge\bigwedge^iR\]
where $\bigwedge^0R := \FF$ (as usual) and $\langle\overline{\PP}_0\rangle := \bigwedge^0\overline{V} = \FF$ by convention (when $d \geq k$).
\end{lemma}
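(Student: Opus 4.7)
The plan is to exploit the splitting $V=\overline V\oplus R$ of decomposition \eqref{decomposizione di base}, which induces the canonical direct sum
$$\bigwedge^k V=\bigoplus_{j=0}^{\min(d,k)}\bigl(\bigwedge^{k-j}\overline V\bigr)\wedge\bigl(\bigwedge^j R\bigr).$$
Since every summand $\langle\overline{\PP}_{k-j}\rangle\wedge\bigwedge^j R$ appearing in the statement is contained in the $j$-th piece of this decomposition, the sum on the right-hand side of the lemma is automatically direct, and only the set-theoretic equality needs to be established.

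For the inclusion $\supseteq$, fix $j\in\{0,\dots,\min(d,k)\}$, take a point $\overline X$ of $\overline{\cP}_{k-j}$ (with the convention $\overline X=0$ when $j=k$, matching $\langle\overline{\PP}_0\rangle=\FF$), and pick linearly independent $r_1,\dots,r_j\in R$. Because $R=\mathrm{Rad}(\eta)$ is totally $\eta$-singular and meets $\overline V$ trivially, the $k$-subspace $Y:=\overline X\oplus\langle r_1,\dots,r_j\rangle$ is totally $\eta$-singular, hence a point of $\cP_k$. For any basis $\overline x_1,\dots,\overline x_{k-j}$ of $\overline X$ one has $\varepsilon_k(Y)=\langle\overline x_1\wedge\cdots\wedge\overline x_{k-j}\wedge r_1\wedge\cdots\wedge r_j\rangle$. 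Letting $\overline X$ and the $r_a$ vary, these Plücker vectors generate the whole of $\langle\overline{\PP}_{k-j}\rangle\wedge\bigwedge^j R$.

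For the converse, fix $Y\in\cP_k$ and set $i:=\dim(Y\cap R)$. Choose a basis $v_1,\dots,v_k$ of $Y$ in which $v_{k-i+1},\dots,v_k$ span $Y\cap R$, and for $a\le k-i$ decompose $v_a=\overline v_a+\rho_a$ according to $V=\overline V\oplus R$. Since $\eta$ descends to $\overline\eta$ on $V/R\cong\overline V$ and $Y$ is totally $\eta$-singular, the projection of $Y$ along $R$, namely $\langle\overline v_1,\dots,\overline v_{k-i}\rangle$, is a totally $\overline\eta$-singular $(k-i)$-subspace of $\overline V$. Expanding $v_1\wedge\cdots\wedge v_k$ by multilinearity yields
$$\sum_{S\subseteq\{1,\dots,k-i\}}\pm\Bigl(\bigwedge_{a\in S}\overline v_a\Bigr)\wedge\Bigl(\bigwedge_{a\notin S}\rho_a\wedge v_{k-i+1}\wedge\cdots\wedge v_k\Bigr);$$
for each $S$ the first factor is a Plücker vector of a totally $\overline\eta$-singular $|S|$-subspace of $\langle\overline v_1,\dots,\overline v_{k-i}\rangle$ (a scalar when $S=\emptyset$), so it lies in $\langle\overline{\PP}_{|S|}\rangle$, while the second factor is a $(k-|S|)$-wedge of vectors of $R$, hence lies in $\bigwedge^{k-|S|}R$. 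Setting $j:=k-|S|$ places each summand in the appropriate piece $\langle\overline{\PP}_{k-j}\rangle\wedge\bigwedge^j R$.

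No single step is technically deep; the main subtlety to keep clearly in mind is that in the multilinear expansion every partial wedge $\bigwedge_{a\in S}\overline v_a$ automatically represents a totally $\overline\eta$-singular subspace of $\overline V$, which is what guarantees that every term of the expansion of $\varepsilon_k(Y)$ falls into the claimed direct sum.
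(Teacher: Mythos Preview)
Your proof is correct and follows essentially the same approach as the paper's: both use the splitting $V=\overline V\oplus R$ and the induced decomposition $\bigwedge^k V=\bigoplus_j(\bigwedge^{k-j}\overline V)\wedge\bigwedge^j R$, establish the inclusion $\supseteq$ by noting that $\overline X\oplus\langle r_1,\dots,r_j\rangle$ is totally $\eta$-singular, and obtain $\subseteq$ by expanding the Pl\"ucker vector of a point of $\cP_k$ multilinearly along the splitting. Your write-up is simply a more explicit version of the paper's rather terse argument, spelling out the choice of adapted basis and the bookkeeping on each term of the expansion.
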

\begin{proof} Every vector $x\in V$ splits as $\overline{x}+x^R$ for uniquely determined vectors $\overline{x}\in\overline{V}$ and $x^R\in R$. Moreover, $x$ is $\eta$-singular if and only if $\overline{x}$ is $\overline{\eta}$-singular and $x \perp y$ where $y = \overline{y}+y^R$ if and only if $\overline{x}\perp \overline{y}$. It follows that every vector of $\PP_k$ is a sum $\sum_{i=0}^{\min(d,k)}u_i\wedge v_i$ with $u_i\in \overline{\PP}_{k-i}$ and $v_i$ a pure power in $\bigwedge^iR$, for $i = 0, 1,\dots, \min(d,k)$. Conversely, every wedge product $u_i\wedge v_i$ as above belongs to $\PP_k$. The conclusion follows from these remarks and the fact that, since $V = \overline{V}\oplus R$, we also have
$\bigwedge^kV = \bigoplus_{i=0}^{\min(d,k)}\bigwedge^{k-i}\overline{V}\wedge\bigwedge^iR$.
\end{proof}

\begin{corollary}\label{prel novo 2}
For $1\leq k \leq n$ we have
\[\dim(\langle\PP_k\rangle) = \sum_{i=0}^{\min(d,k)}\dim(\langle\overline{\PP}_{k-i}\rangle)\cdot{d\choose i}.\]
\end{corollary}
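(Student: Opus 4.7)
The plan is to obtain Corollary \ref{prel novo 2} as a direct computation from the decomposition already established in Lemma \ref{prel novo 1}, by taking dimensions on both sides.

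First, I would note that since $V = \overline{V}\oplus R$, there is a canonical decomposition
\[\bigwedge^k V ~=~ \bigoplus_{i=0}^{\min(d,k)}\bigwedge^{k-i}\overline{V}\wedge\bigwedge^iR,\]
and each summand $\bigwedge^{k-i}\overline{V}\wedge\bigwedge^iR$ is naturally isomorphic, as an $\FF$-vector space, to the tensor product $\bigwedge^{k-i}\overline{V}\otimes\bigwedge^iR$. This isomorphism sends $u\otimes v$ to $u\wedge v$ and is well-defined because $\overline{V}\cap R = \{0\}$ implies that a basis of $\bigwedge^{k-i}\overline{V}$ wedged with a basis of $\bigwedge^iR$ yields a linearly independent family in $\bigwedge^kV$ (indeed, extending a basis of $R$ and a basis of $\overline{V}$ to a basis of $V$, these products become part of the standard monomial basis of $\bigwedge^kV$).

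Next, restricting this isomorphism to the subspace $\langle\overline{\PP}_{k-i}\rangle\wedge\bigwedge^iR$ of $\bigwedge^{k-i}\overline{V}\wedge\bigwedge^iR$, one obtains an isomorphism onto $\langle\overline{\PP}_{k-i}\rangle\otimes\bigwedge^iR$. Therefore
\[\dim\bigl(\langle\overline{\PP}_{k-i}\rangle\wedge\bigwedge^iR\bigr) ~=~ \dim(\langle\overline{\PP}_{k-i}\rangle)\cdot\dim\bigl(\bigwedge^iR\bigr) ~=~ \dim(\langle\overline{\PP}_{k-i}\rangle)\cdot{d\choose i}.\]

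Finally, since the sum in Lemma \ref{prel novo 1} is a direct sum (it is a sum of distinct summands of the ambient direct sum decomposition of $\bigwedge^kV$ displayed above), the dimension of $\langle\PP_k\rangle$ is the sum of the dimensions of its summands, yielding the claimed formula. The only step requiring any care is the verification that the wedge and tensor product spaces have the same dimension, which rests entirely on $\overline{V}\cap R = \{0\}$; but this is immediate from the decomposition \eqref{decomposizione di base} and poses no real obstacle.
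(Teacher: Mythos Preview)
Your argument is correct and follows essentially the same route as the paper's: both derive the formula directly from Lemma~\ref{prel novo 1} by observing that the sum there is direct and that $\dim(X\wedge Y)=\dim(X)\cdot\dim(Y)$ when $X$ and $Y$ live in complementary direct summands. The paper states this in one line, while you unpack the tensor--wedge identification explicitly, but there is no substantive difference.
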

\begin{proof}
It follows directly from Lemma~\ref{prel novo 1}, recalling that $\dim(\bigwedge^iR) = {d\choose i}$ and $\dim(\langle X\wedge Y\rangle) = \dim(X)\cdot\dim(Y)$ for any two vector spaces $X$ and $Y$ with trivial intersection.
\end{proof}

We recall the following property of binomial coefficients.
\begin{lemma}\label{lemma-sum}
We have:
  \begin{equation}\label{e-sum}
 \sum_{i=0}^{\infty}{{N-d}\choose{k-i}}{d\choose i}={N\choose k}
  \end{equation}
 where, as usual, we put ${{N-d}\choose{k-i}} :=0$ if either $k-i < 0$ or $k-i > N-d$. 
\end{lemma}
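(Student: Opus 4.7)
The identity to prove is the Chu--Vandermonde convolution, specialised to the splitting $N = (N-d) + d$. So the plan is simply to recognise it as such and indicate one of the two standard derivations.

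My preferred route is the generating-function argument. I would start from the polynomial identity $(1+x)^N = (1+x)^{N-d}\cdot(1+x)^d$, expand both sides by the binomial theorem, and compare the coefficients of $x^k$. On the left one obtains $\binom{N}{k}$, while on the right the Cauchy product formula yields $\sum_{i} \binom{N-d}{k-i}\binom{d}{i}$, the sum ranging over all integers $i$; the stated convention that $\binom{N-d}{k-i}=0$ when $k-i<0$ or $k-i>N-d$ (together with the analogous vanishing of $\binom{d}{i}$ outside $0\le i\le d$) ensures that only finitely many terms are nonzero and that one can harmlessly write the upper limit as $\infty$.

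Alternatively, one can give a one-line combinatorial proof: partition an $N$-element set into a block $A$ of size $N-d$ and a block $B$ of size $d$; a $k$-subset is obtained by choosing $i$ elements in $B$ and $k-i$ elements in $A$, whence $\binom{N}{k} = \sum_{i\ge 0}\binom{N-d}{k-i}\binom{d}{i}$. Either argument is essentially a one-liner, so there is no real obstacle; the only point worth recording explicitly is why restricting $i$ to the range $\max(0,k-(N-d)) \le i \le \min(k,d)$ gives the full sum, and this is exactly the vanishing convention fixed in the statement. I would write out the coefficient-of-$x^k$ proof in two lines and leave the vanishing convention as the one sentence of commentary.
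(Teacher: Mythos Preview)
Your proposal is correct and matches the paper's own proof almost verbatim: the paper expands $(x+1)^N=(x+1)^{N-d}(x+1)^d$ and compares the coefficient of $x^k$ on both sides. The combinatorial alternative you mention is fine too, but the generating-function argument is exactly what is done here.
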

% \begin{proof} Put $M := N-d$. Thus~\eqref{e-sum} can be rewritten as 
%  \begin{equation}\label{e-sum bis}
%  \sum_{i=0}^{\infty}{M\choose{k-i}}{d\choose i}={{M+d}\choose k}.
%   \end{equation}
% We shall prove by induction on $d$ that \eqref{e-sum bis} holds true for any choice of $k$ and $M$. The case $d=0$ is immediate. Suppose \eqref{e-sum bis} holds for a given value of $d$ with $k$ and $M$ arbitrary and consider  $d+1$; since
% ${{d+1}\choose i}={d\choose i}+{d\choose{i-1}}$ (with ${d\choose -1} = 0$ when $i = 0$), by the inductive hypothesis we get
%   \[  \sum_{i=0}^{\infty}{M\choose{k-i}}{{d+1}\choose i}= \sum_{i=0}^{\infty}{{M}\choose{k-i}}\left({d\choose i}+{d\choose{i-1}}
%     \right)={{M+d}\choose{k}}+\sum_{i=0}^{\infty}{{M}\choose{k-i}}{d\choose{i-1}}.\]
% Also
%   \[ \sum_{i=0}^{\infty}{{M}\choose{k-i}}{d\choose{i-1}}= \sum_{j=0}^{\infty}{{M}\choose{k-1-j}}{d\choose{j}}= {{M+d}\choose{k-1}}, \]
% again by the inductive hypothesis but with $k$ replaced by $k-1$. So,
%   \[ \sum_{i=0}^{\infty}{{M}\choose{k-i}}{{d+1}\choose i}=  {{M+d}\choose k}+{{M+d}\choose{k-1}}={{M+d+1}\choose k}.\]
%   The inductive step is done. Hence~\eqref{e-sum bis} holds.
%   Consequently~\eqref{e-sum} holds as well.
% \end{proof}
\begin{proof}
  By the binomial theorem,
  the coefficient of $x^k$ in $(x+1)^N$ is the right hand side
  of~\eqref{e-sum}; however
  $(x+1)^N=(x+1)^{N-d}(x+1)^d$ and the coefficient of $x^k$ in
  $(x+1)^{N-d}(x+1)^d$ is the left hand side of~\eqref{e-sum}.
\end{proof}

\begin{theorem}\label{prel novo 3}
Both of the following hold for $1 \leq k \leq n$.
\begin{enumerate}
\item\label{pt1a} If $\dim(\langle\overline{\PP}_h\rangle) = {{N-d}\choose h}$ for every $h = 1, 2,\dots, n$ then $\dim(\langle\PP_k\rangle) = {N\choose k}$.
\item\label{pt2a} If $\dim(\langle\overline{\PP}_h\rangle) = {{N-d}\choose h}-{{N-d}\choose{h-2}}$ for every $h = 1, 2,\dots, n$, then $\dim(\langle\PP_k\rangle) = {N\choose k}-{N\choose{k-2}}$.
\end{enumerate}
\end{theorem}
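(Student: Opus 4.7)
The plan is to prove both parts as direct consequences of Corollary \ref{prel novo 2} combined with Lemma \ref{lemma-sum}. The content of the two statements is essentially a bookkeeping identity on binomial coefficients, so the argument should be short: substitute the hypothesised formula for $\dim(\langle\overline{\PP}_h\rangle)$ into the sum provided by Corollary \ref{prel novo 2}, then recognise the result via Lemma \ref{lemma-sum}.

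For part \ref{pt1a}, I would start from Corollary \ref{prel novo 2} and write
\[\dim(\langle\PP_k\rangle) \;=\; \sum_{i=0}^{\min(d,k)}\dim(\langle\overline{\PP}_{k-i}\rangle)\cdot{d\choose i}.\]
Under the hypothesis $\dim(\langle\overline{\PP}_h\rangle) = \binom{N-d}{h}$ for $1\leq h\leq n$, this becomes $\sum_{i=0}^{\min(d,k)}\binom{N-d}{k-i}\binom{d}{i}$. To tidy this up I would observe that, using the conventions that $\binom{N-d}{k-i}=0$ when $k-i<0$ or $k-i>N-d$ and $\binom{d}{i}=0$ when $i>d$, the truncation at $\min(d,k)$ is harmless, so the sum equals $\sum_{i\geq 0}\binom{N-d}{k-i}\binom{d}{i}=\binom{N}{k}$ by Lemma \ref{lemma-sum}. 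The only small point to mention is the boundary case $i=k$ (which can occur when $d\geq k$): there one needs $\dim(\langle\overline{\PP}_0\rangle)=1=\binom{N-d}{0}$, which is exactly the convention introduced in Lemma \ref{prel novo 1}.

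For part \ref{pt2a} the strategy is identical, just with an extra term. Substituting $\dim(\langle\overline{\PP}_h\rangle) = \binom{N-d}{h} - \binom{N-d}{h-2}$ and splitting,
\[\dim(\langle\PP_k\rangle) \;=\; \sum_{i\geq 0}\binom{N-d}{k-i}\binom{d}{i} \;-\; \sum_{i\geq 0}\binom{N-d}{(k-2)-i}\binom{d}{i}.\]
The first sum is $\binom{N}{k}$ and the second, by a second application of Lemma \ref{lemma-sum} with $k$ replaced by $k-2$, is $\binom{N}{k-2}$. Again the boundary term $h=0$ (and $h=1$, for the $\binom{N-d}{-1}$ entry) is consistent with the stated conventions: $\binom{N-d}{0}-\binom{N-d}{-2}=1$ matches $\dim(\langle\overline{\PP}_0\rangle)=1$, so extending the hypothesis to $h\in\{0,1\}$ costs nothing.

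There is no real obstacle here: both parts are mechanical once Corollary \ref{prel novo 2} and Lemma \ref{lemma-sum} are in hand. The one thing I would double-check while writing is the matching of the $h=0$ (and, in part \ref{pt2a}, $h=1$) boundary values of the hypothesised formula with the conventions set in Lemma \ref{prel novo 1}, since the sum in Corollary \ref{prel novo 2} can genuinely reach these low values of the index $k-i$ when $d$ is large relative to $k$.
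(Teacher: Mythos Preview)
Your proposal is correct and follows essentially the same approach as the paper: substitute the hypothesised dimensions into the sum from Corollary~\ref{prel novo 2}, then apply Lemma~\ref{lemma-sum} (once for part~\ref{pt1a}, twice for part~\ref{pt2a}). Your attention to the boundary terms $h=0,1$ is, if anything, slightly more careful than the paper's own treatment.
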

\begin{proof}
To prove Part \ref{pt1a}, replace $\dim(\langle\overline{\PP}_{k-i}\rangle)$ with ${{N-d}\choose{k-i}}$ in Corollary \ref{prel novo 2} and apply \eqref{e-sum} of Lemma \ref{lemma-sum}. Turning to Part \ref{pt2a}, replace $\dim(\langle\overline{\PP}_{k-i}\rangle)$ with ${{N-d}\choose{k-i}}-{{N-d}\choose{k-i-2}}$ in Corollary \ref{prel novo 2}. We obtain
\begin{multline*}
  \dim(\langle\PP_k\rangle) = \sum_{i=0}^{\min(d,k)}\left({{N-d}\choose{k-i}}-{{N-d}\choose{k-i-2}}\right)\cdot{d\choose i} = \\
  \sum_{i=0}^{\min(d,k)}{{N-d}\choose{k-i}}{d\choose i} - \sum_{i=}^d{{N-d}\choose{k-2-i}}{d\choose i}.
\end{multline*}
Hence $\dim(\langle\PP_k\rangle) = {N\choose k}-{N\choose{k-2}}$ by Lemma \ref{lemma-sum}.
\end{proof}

\section{Hermitian $k$-Grassmannians }\label{Hermitian}
In this section we shall prove Part~\ref{pt1} of Theorem~\ref{Main Theorem}. In view of Part~\ref{pt1a} of Theorem~\ref{prel novo 3}, it is sufficient to prove Part~\ref{pt1} of Theorem~\ref{Main Theorem} in the non-degenerate case. Accordingly, throughout this section $h:V\times V\to\FF$ is a non-degenerate $\sigma$-Hermitian form of Witt index $n$ and anisotropic defect $\mathrm{def}_0(h) = d_0 = N-2n$, where $N = \dim(V)$ and $\sigma$ is an involutory automorphism of $\FF$. Let $\FF_0:=\Fix(\sigma)$ be the subfield of $\FF$ consisting of the elements fixed by $\sigma$.
It is always possible to choose a basis
\[ E=(e_1,\dots,e_{2n},e_{2n+1},\dots,e_N) \]
of $V$ and $\kappa_{2n+1},\dots,\kappa_{N} \in \FF_0\setminus\{0\}$ such that
\begin{equation}\label{hermform}
h\left(\sum_{i=1}^Ne_ix_i,\sum_{j=1}^Ne_jy_j\right)= \sum_{i=1}^n(x_{2i-1}^{\sigma}y_{2i}+x_{2i}^{\sigma}y_{2i-1})+\sum_{j=2n+1}^N
\kappa_jx_j^{\sigma}y_j
\end{equation}
where the form induced by $h$ on $\langle e_{2n+1},\dots,e_N\rangle$ is anisotropic in $\FF$, that is
\[ \sum_{j=2n+1}^N\kappa_jx_j^{\sigma+1}=0 \Leftrightarrow x_{2n+1}=x_{2n+2}=\dots =x_N=0, \]
see~\cite[\S 6]{BA9}.
Observe that for all $i\in\{1,\dots,n\}$ the vectors $(e_{2i-1},e_{2i})$ form a hyperbolic pair for $h$.

For $1\leq k\leq n$, let $\cH_k$ be the Hermitian $k$-Grassmannian associated to $h$ and $\HH_k = \varepsilon_k(\cH_k)$ be its $\varepsilon_k$-image in $V_k$. According to the conventions stated in Section \ref{preliminaries}, given $J\subseteq\{1,2,\dots, N\}$ we denote by $h_J$ the restriction of $h$ to $V_J\times V_J$, by $\cH_{J,k}$ the Hermitian $k$-Grassmannian associated to $h_J$ (if $k$ is not greater than the Witt index of $h_J$) and we put $\HH_{J,k} = \varepsilon_k(\cH_{J,k})$ ($\subseteq V_{J,k}$).

\begin{lemma}\label{l2}
Suppose $n\geq 2$. Given $i,j\in\{1,2,\dots,n\}$ with $i\neq j$, let $J=\{2i-1,2i,2j-1,2j\}$. Then,  $\langle\HH_{J,2}\rangle=V_{J,2}$.
\end{lemma}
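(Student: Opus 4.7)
The plan is to do a direct computation in the $4$-dimensional ambient space $V_J = \langle e_{2i-1}, e_{2i}, e_{2j-1}, e_{2j}\rangle$, where the restriction $h_J$ is the orthogonal sum of two hyperbolic pairs. Renaming $a = e_{2i-1}$, $b = e_{2i}$, $c = e_{2j-1}$, $d = e_{2j}$, the space $V_{J,2}$ has dimension $\binom{4}{2}=6$ with basis $a\wedge b$, $a\wedge c$, $a\wedge d$, $b\wedge c$, $b\wedge d$, $c\wedge d$, and the relations are $h(a,b) = h(c,d) = 1$ with all other inner products among $\{a,b,c,d\}$ equal to zero.

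First I would note that the four $2$-spaces $\langle a,c\rangle$, $\langle a,d\rangle$, $\langle b,c\rangle$, $\langle b,d\rangle$ are visibly totally $h$-singular, so their Plücker images yield $a\wedge c$, $a\wedge d$, $b\wedge c$, $b\wedge d$. Thus only $a\wedge b$ and $c\wedge d$ remain, and it suffices to produce two further singular $2$-spaces whose Plücker images are linearly independent modulo the span of those four vectors.

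For this I would consider, for $\alpha\in\FF^\times$, the pair $(a+\alpha c,\, b+\beta d)$. The vectors $a+\alpha c$ and $b+\beta d$ are automatically singular, and a short computation gives $h(a+\alpha c, b+\beta d) = 1 + \alpha^\sigma\beta$, so the $2$-space $U_\alpha := \langle a+\alpha c, b - \alpha^{-\sigma}d\rangle$ is totally singular. Its Plücker image is
\[
(a+\alpha c)\wedge(b - \alpha^{-\sigma}d) \;\equiv\; a\wedge b \,-\, \alpha^{1-\sigma}\, c\wedge d \pmod{\langle a\wedge c,\, a\wedge d,\, b\wedge c,\, b\wedge d\rangle}.
\]
Taking $\alpha = 1$ and $\alpha = t$ for some $t\in\FF\setminus\FF_0$ (which exists because $\sigma$ is a nontrivial involution, as $h$ is Hermitian) yields two vectors whose difference is $(1-t^{1-\sigma})\, c\wedge d$ with $1-t^{1-\sigma}\neq 0$. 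Hence $c\wedge d$, and then $a\wedge b$, lie in $\langle\HH_{J,2}\rangle$.

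The only subtle step is the appeal to $\sigma\neq\mathrm{id}$ at the end; without it the two specializations of $\alpha$ would collapse, so this is where the Hermitian hypothesis (as opposed to a symmetric bilinear setting, which is handled as a quadratic form) is essential. Everything else is routine linear algebra in $\bigwedge^2 V_J$.
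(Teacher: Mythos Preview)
Your proof is correct and follows essentially the same approach as the paper: after noting that the four ``cross'' wedges $a\wedge c$, $a\wedge d$, $b\wedge c$, $b\wedge d$ already lie in $\HH_{J,2}$, you exhibit the same one-parameter family of totally singular $2$-spaces (the paper uses $u_1^x = x\,e_{2i-1}+e_{2j-1}$, $u_2^x = -x^{-\sigma}e_{2i}+e_{2j}$, which is your construction with the roles of $(a,b)$ and $(c,d)$ swapped) and subtracts two specializations to isolate $c\wedge d$, invoking $\sigma\neq\mathrm{id}$ to ensure the coefficient $1-t^{1-\sigma}$ is nonzero. The only cosmetic difference is that the paper takes two generic parameters $\alpha,\beta$ with $\alpha\beta^{-1}\notin\FF_0$ rather than your $1$ and $t$.
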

\begin{proof}
Clearly, $\langle\HH_{J,2}\rangle\subseteq V_{J,2}$. Recall that $V_{J,2}=\bigwedge^2\langle e_{2i-1},e_{2i},e_{2j-1},e_{2j}\rangle$. By definition of $h$, the vectors $e_{2i-1}\wedge e_{2j-1}$, $e_{2i-1}\wedge e_{2j}$, $e_{2i}\wedge e_{2j-1}$ and $e_{2i}\wedge e_{2j}$ represent totally $h$-singular
lines of $\PG(V_J)$; so, all of them are elements of
$\langle\HH_{J,2}\rangle$. In order to complete the proof we need to show that both $e_{2i-1}\wedge e_{2i}$ and
$e_{2j-1}\wedge e_{2j}$ lie in the span of $\HH_{J,2}$. To this purpose, take $\alpha,\beta\in\FF^*$ such that $\alpha\beta^{-1}\not\in\FF_0$ and
consider the four vectors
  \[ u_1^x=x e_{2i-1}+e_{2j-1},\qquad u_2^x=-x^{-\sigma}e_{2i}+e_{2j} \]
  with $x\in\{\alpha,\beta\}$. It is immediate to see that $u_1^x$ and $u_2^x$ are mutually orthogonal singular vectors. So, $\langle u_1^x\wedge u_2^x\rangle
  \in\HH_{J,2}$ and
\[ u_1^x\wedge u_2^x=-e_{2i-1}\wedge e_{2i}x^{1-\sigma}+e_{2j-1}\wedge e_{2j}+e_{2i-1}\wedge e_{2j}x+e_{2i}\wedge e_{2j-1}x^{-\sigma} \in
  \langle\HH_{J,2}\rangle. \]
Consequently,
  \begin{equation}\label{eH2J}
 u_1^{\beta}\wedge u_2^{\beta}-u_1^{\alpha}\wedge u_2^{\alpha}= (\alpha^{1-\sigma}-\beta^{1-\sigma})(e_{2i-1}\wedge e_{2i})+w\in\langle\HH_{J,2}\rangle,
  \end{equation}
with $w=(\beta-\alpha)e_{2i-1}\wedge e_{2j}+(\beta^{-\sigma}-\alpha^{-\sigma})e_{2i}\wedge e_{2j-1}\in\langle\HH_{J,2}\rangle$.
If $\beta^{1-\sigma}=\alpha^{1-\sigma}$, then $\alpha\beta^{-1}=(\alpha\beta^{-1})^{\sigma}\in\FF_0$ --- a contradiction.
So by~\eqref{eH2J}, $e_{2i-1}\wedge e_{2i}\in\langle\HH_{J,2}\rangle$. An analogous argument shows that also
$e_{2j-1}\wedge e_{2j}\in\langle \HH_{J,2}\rangle$ and this completes the proof.
\end{proof}
The following is shown by De Bruyn \cite[Corollary 1.2]{B10} (also Block and Cooperstein \cite[Corollary 3.2]{BC2012}). For completeness's sake, we provide a proof here.

\begin{lemma}\label{difetto zero}
Suppose $d_0 =0$. Then for all $k$ we have $\langle\HH_k\rangle=V_k$.
\end{lemma}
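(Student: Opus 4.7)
The plan is to prove $\langle\HH_k\rangle=V_k$ by induction on $k$ (with $1\le k\le n$, since for $k>n$ there are no totally singular $k$-spaces and the statement is vacuous). The strategy is to show that every basis vector $e_T=e_{t_1}\wedge\cdots\wedge e_{t_k}$ of $V_k$, with $T\subseteq\{1,\ldots,2n\}$ of size $k$, belongs to $\langle\HH_k\rangle$. The combinatorial bookkeeping is controlled by how $T$ meets the hyperbolic pairs $P_i:=\{2i-1,2i\}$ of the chosen basis; let $A(T):=\{i:P_i\subseteq T\}$ record the ``complete'' pairs.

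First I would dispose of the case $|A(T)|=0$: then the vectors $\{e_t:t\in T\}$ are pairwise orthogonal $h$-singular vectors (as one reads directly from \eqref{hermform}, where each $e_i$ is singular and $e_a\perp e_b$ unless $\{a,b\}=P_m$ for some $m$), so $V_T$ is totally $h$-singular and $e_T$ represents a point of $\cH_k$. This also disposes of the base case $k=1$, in which every singleton $T$ has $|A(T)|=0$, and of the portion of the base case $k=2$ where $T$ is not a hyperbolic pair.

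For $|A(T)|\ge 1$, the idea is to ``peel off'' one complete pair and reduce to smaller $k$. Pick $i\in A(T)$; since $|A(T)|\ge 1$ and $k\le n$, the number of pairs meeting $T$ is at most $|A(T)|+(k-2|A(T)|)=k-|A(T)|\le n-1$, so some $P_j$ is disjoint from $T$. Setting $J:=P_i\cup P_j$ and $I:=T\setminus P_i$, one checks immediately that $I\cap J=\emptyset$ and $V_I\perp V_J$, the latter because $I$ lies in $\bigoplus_{m\notin\{i,j\}}V_{P_m}$, which is orthogonal to $V_J$. Lemma~\ref{l2} (applicable since $J$ contains two hyperbolic pairs) supplies $e_{P_i}\in\langle\HH_{J,2}\rangle$. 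The induction hypothesis, applied to the restriction of $h$ to the non-degenerate Hermitian subspace $V_{\{1,\ldots,2n\}\setminus J}$ of Witt index $n-2\ge k-2$, supplies $e_I\in\langle\HH_{\{1,\ldots,2n\}\setminus J,\,k-2}\rangle$ (when $k=2$ one has $I=\emptyset$ and this step is vacuous). Lemma~\ref{l11} then gives $e_T=\pm\,e_I\wedge e_{P_i}\in\langle\HH_k\rangle$, closing the induction.

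The main obstacle—really the only subtle point—is guaranteeing the existence of an unused pair $P_j$ when $|A(T)|\ge 1$, and this rests exactly on the inequality $k\le n$. The other ingredients needed to apply Lemma~\ref{l11} (namely $I\cap J=\emptyset$, $V_I\perp V_J$, and non-degeneracy of the restricted form of Witt index $n-2$) are automatic from the mutually orthogonal hyperbolic-plane decomposition of $V$, so the induction closes cleanly.
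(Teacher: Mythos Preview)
Your proof is correct and follows essentially the same approach as the paper's: induction on $k$, using Lemma~\ref{l11} to split off orthogonal pieces and Lemma~\ref{l2} to handle a complete hyperbolic pair $P_i\subseteq T$. The only organizational difference is that where the paper peels off a single vector $e_{j_0}$ when the smallest index does not complete a pair, you instead observe directly that $|A(T)|=0$ already makes $e_T$ a point of $\cH_k$; this is a slight shortcut but not a genuinely different argument.
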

\begin{proof} Clearly, $\langle\HH_k\rangle\subseteq V_k$. To prove the reverse containment, we proceed by induction on $k$.
Recall from Section~\ref{preliminaries} that $V_k$ is spanned by the set
$E_k$ consisting of all $(e_{j_1}\wedge e_{j_2}\wedge\dots\wedge e_{j_k})$ where $\{j_1,\dots,j_k\}$ varies among all $k$-subsets of $\{1,\dots,2n\}$
and $j_1<j_2<\dots<j_k$.

If $k=1$, it is well known that the polar space $\HH_1$ generates $V$ and there is nothing to prove. Suppose the assertion holds for all values up to $k$ and consider $e:=e_{j_0,j_1,\dots,j_k}\in E_{k+1}$. We can assume without loss of generality $j_0\leq 2$. If $j_1>2$, let $J=\{3,4,\dots,N\}$. By induction, $\HH_{J,k}$ spans $V_{J,k}$. Since $\{1,2\}$ and $J$ are disjoint and $V_{\{1,2\}}$ is orthogonal to $V_J$, by Lemma~\ref{l11} we have
\[ e\in \langle e_{j_0}\rangle\wedge V_{J,k}= \langle e_{j_0}\rangle\wedge\langle\HH_{J,k}\rangle\subseteq
\langle\HH_{\{1,2\},1}\wedge\HH_{J,k}\rangle\subseteq\langle\HH_{\{1,2\}\cup J,k+1}\rangle\subseteq\langle\HH_{k+1}\rangle. \]
Suppose now $j_1=2$; consequently $j_0=1$ and $j_2>2$. Since, by hypothesis, $n\geq k+1$ and the indexes $j_2,j_3,\dots,j_k$ are
at most $k-1 (\leq n-2)$, there is at least one subset of the form $\{2i-1,2i\}$ with $i=2,3,\dots,n$ which is disjoint from $\{j_2,\dots,j_{k}\}$.

For simplicity of notation suppose $\{3,4\}$ to be such that $\{3,4\}\cap\{j_2,\dots,j_k\}=\emptyset$. Let $J=\{5,6,\dots,2n\}$.
By induction, $\langle \HH_{J,k-1}\rangle=V_{J,k-1}$. On the other hand, $e_1\wedge e_2\in V_{\{1,2,3,4\},2}=\langle\HH_{\{1,2,3,4\},2}\rangle$ by Lemma~\ref{l2}. So, $e\in\langle\HH_{\{1,2,3,4\},2}\rangle\wedge\langle\HH_{J,k-1}\rangle\subseteq\langle\HH_{k+1}\rangle$ by Lemma~\ref{l11}.
The lemma follows.
\end{proof}

\begin{theorem}\label{main-herm}
We have $\langle\HH_k\rangle=V_k$ for all $k$, independently of the anisotropic defect $d_0$ of $h$.
\end{theorem}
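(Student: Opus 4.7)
The plan is to extend Lemma~\ref{difetto zero} to the case of positive anisotropic defect by induction on $k$. The induction hypothesis is the slightly stronger statement that for every $J\subseteq\{1,\dots,N\}$ that is a disjoint union of hyperbolic pairs $\{2i-1,2i\}$ and anisotropic indices from $\{2n+1,\dots,N\}$, and every $k$ not exceeding the Witt index of $h_J$, one has $\langle\HH_{J,k}\rangle=V_{J,k}$. The base case $k=1$ is the statement that the singular vectors span $V_J$: each anisotropic $e_j$ decomposes as $(e_{2i-1}+b_0 e_{2i}+e_j)-e_{2i-1}-b_0 e_{2i}$, where $b_0\in\FF$ satisfies $b_0+b_0^\sigma=-\kappa_j$, a solvable equation because the relative trace $\mathrm{Tr}\colon\FF\to\FF_0$ is surjective for the non-trivial involution $\sigma$.

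The crucial preparatory step is to establish two analogues of Lemma~\ref{l2} in which anisotropic indices appear. Claim~A: for distinct $i',l\in\{1,\dots,n\}$ and anisotropic $j$, setting $J^\star:=\{2i'-1,2i',2l-1,2l,j\}$, the vector $w_0:=e_{2l-1}+b_0 e_{2l}+e_j$ (with $b_0$ as above) is singular and orthogonal to $e_{2i'-1}$, so $e_{2i'-1}\wedge w_0\in\HH_{J^\star,2}$; isolating $e_{2i'-1}\wedge e_j$ by subtracting the totally singular wedges $e_{2i'-1}\wedge e_{2l-1}$ and $b_0(e_{2i'-1}\wedge e_{2l})$ gives $e_{2i'-1}\wedge e_j\in\langle\HH_{J^\star,2}\rangle$, and similarly for $e_{2i'}\wedge e_j$. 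Claim~B: for distinct anisotropic $j,j'$ and distinct $i,l\in\{1,\dots,n\}$, setting $J':=\{2i-1,2i,2l-1,2l,j,j'\}$ and choosing $x_0,y_0$ with $x_0+x_0^\sigma=-\kappa_j$ and $y_0+y_0^\sigma=-\kappa_{j'}$, the vectors $u:=x_0 e_{2i-1}+e_{2i}+e_j$ and $v:=y_0 e_{2l-1}+e_{2l}+e_{j'}$ are singular and mutually orthogonal (the anisotropic parts lie in orthogonal $1$-spaces and the hyperbolic parts in disjoint hyperbolic pairs), hence $u\wedge v\in\HH_{J',2}$; isolating $e_j\wedge e_{j'}$ by subtracting off wedges covered by Lemma~\ref{l2} and Claim~A gives $e_j\wedge e_{j'}\in\langle\HH_{J',2}\rangle$.

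With these tools the inductive step proceeds by a case analysis on $T\subseteq J$ with $|T|=k+1$, splitting $T=T_h\cup T_a$ into hyperbolic and anisotropic parts. If $T_a=\emptyset$, the argument of Lemma~\ref{difetto zero} applies verbatim, since the auxiliary index sets it constructs remain disjoint unions of hyperbolic pairs and anisotropic indices in the present setting. If $|T_a|\ge 2$, pick $j,j'\in T_a$ and, since $|T_h|\le k-1\le n-2$, two hyperbolic pairs disjoint from $T_h$; Claim~B places $e_j\wedge e_{j'}$ in $\langle\HH_{J',2}\rangle$, and the induction hypothesis places $e_{T\setminus\{j,j'\}}$ in the span of $\HH_{J^{\star\star},k-1}$ for a disjoint admissible $J^{\star\star}$, so Lemma~\ref{l11} concludes. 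If $|T_a|=1$ and some $i\in T_h$ has its hyperbolic partner outside $T_h$, Claim~A handles $e_i\wedge e_j$ and induction handles the remaining $k-1$ indices. If $|T_a|=1$ but every hyperbolic index in $T_h$ has its partner also in $T_h$, then $T_h$ contains a complete hyperbolic pair $\{2i'-1,2i'\}$; combining Lemma~\ref{l2} (for $e_{2i'-1}\wedge e_{2i'}$), the base case (for $e_j$), and Lemma~\ref{l11} places $e_{2i'-1}\wedge e_{2i'}\wedge e_j$ in the span of $\HH_{J_1\cup J_2,3}$ for suitable disjoint $J_1,J_2$, and induction on the remaining $k-2$ indices concludes via Lemma~\ref{l11} once more.

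The main obstacle is the verification of Claims~A and~B; both ultimately rest on the surjectivity of the relative trace of $\sigma$, which is precisely what distinguishes the genuinely Hermitian case from the symmetric bilinear one. Everything else is careful bookkeeping to ensure that, throughout the induction, sufficiently many unused hyperbolic pairs remain available to build the auxiliary index sets demanded by the claims and by Lemma~\ref{l11}; this is guaranteed by the hypothesis $k+1\le n$ together with $n\ge 2$.
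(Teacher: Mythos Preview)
Your argument is correct, but it takes a substantially more laborious route than the paper's. The paper argues by induction on the anisotropic defect $d_0$ (with a nested induction on $k$): given $e_J$ with $N\in J$, pick a single hyperbolic pair $X_i=\{2i-1,2i\}$ disjoint from $J$, observe that $e_N\in\langle\HH_{\{2i-1,2i,N\},1}\rangle$ via the trace trick, and apply the induction hypothesis to $e_{J\setminus\{N\}}$ inside $V_{I_i}$ where $I_i=\{1,\dots,N-1\}\setminus X_i$; since $h_{I_i}$ is non-degenerate with anisotropic defect $d_0-1$, one application of Lemma~\ref{l11} finishes. No auxiliary claims, no case split on $|T_a|$.

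Your approach instead inducts on $k$ directly, which forces you to manufacture the wedge products $e_i\wedge e_j$ and $e_j\wedge e_{j'}$ explicitly (your Claims~A and~B) and then run a four-way case analysis depending on the anisotropic content of $T$ and the pairing structure of $T_h$. This works, and the bookkeeping of available hyperbolic pairs does go through under the constraint $k+1\le n$, but the payoff of the paper's organization is precisely that it never needs anything beyond the $1$-dimensional version of your trace argument: by peeling off one anisotropic index at a time and absorbing it via Lemma~\ref{l11}, the entire problem reduces to the $d_0=0$ case (Lemma~\ref{difetto zero}) without ever touching mixed wedges like $e_j\wedge e_{j'}$. In effect, the paper's induction on $d_0$ replaces your Claims~A and~B and the whole case split by a single uniform reduction step.
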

\begin{proof}
Clearly, $\langle\HH_k\rangle\subseteq V_k$. To prove the reverse containment we proceed by induction on $d_0$.
For $d_0=0$, the result is  given by Lemma~\ref{difetto zero}. Suppose $d_0>0$ and then argue by induction on $k$. For $k=1$ there is nothing to prove.
So assume $k>1$. We want to prove that for all $J$ with $|J|=k$ we have $e_J\in \langle \HH_k\rangle$.
Define $s:=k-|J\cap\{1,2,\dots,n\}|$. If $s=0$, then $e_J\in V_{\{1,\dots2n\},k}$. As $h_{\{1,\dots, 2n\}}$ is non-degenerate with anisotropic defect $0$, the result follows from Lemma~\ref{difetto zero} applied to $\HH_{\{1,2,\dots,2n\},k}\subseteq\HH_k$. Suppose $s>0$ and $J=\{j_1,j_2,\dots,j_{k-1},j\}$ with $j_1<j_2<\dots<j_{k-1}<j$ and
$j>2n$. We can assume without loss of generality $j=N$. Thus $e_J =e_{J\setminus\{N\}}\wedge e_N$.

Since $k-1<n$, there is at least one pair $X_i:=\{{2i-1,2i}\}$ with $X_i\cap J=\emptyset$ and $1\leq i\leq n$. Since the trace $\mathrm{Tr}:\FF\to\FF_0$ is surjective, there exists $t\in{\mathrm{Tr}^{-1}(-\kappa_N)}$.
 Then, the vector $u=e_{2i-1}+t e_{2i}+e_N$ is singular and $\langle u\rangle$ belongs to $\HH_{\{2i-1,2i,N\},1}$. On the other hand, also $\langle e_{2i-1}\rangle, \langle e_{2i}\rangle\in\HH_{\{2i-1,2i,N\},1}$. So $e_N\in\langle\HH_{\{2i-1,2i,N\},1}\rangle$.

Put $I_i := \{1, 2,\dots,N-1\}\setminus X_i$. Clearly, $e_{J\setminus\{N\}}\in V_{I_i, k-1}$. The form $h_{I_i}$ induced by $h$ on $V_{I_i}$  is non-degenerate with anisotropic defect $d_0-1$. Thus, by the inductive hypothesis (on $k$ or $d_0$, as we like) referred to $h_{I_i}$ we obtain that $e_{J\setminus\{N\}}\in \langle\HH_{I_i,k-1}\rangle$. So $e_J\in\langle\HH_{I_i,k-1}\rangle\wedge\langle\HH_{\{2i-1,2i,N\},1}\rangle$. However $V_{2i-1,2i,N}\perp V_{I_i}$. Hence, by Lemma~\ref{l11}, $\langle\HH_{I_i,k-1}\rangle\wedge\langle\HH_{\{2i-1,2i,N\},1}\rangle \subseteq \langle \HH_{I_i\cup\{2i-1, 2i, N\},k}\rangle = \langle \HH_k\rangle.$ Therefore $e_J\in \langle \HH_k\rangle$. The theorem follows.
\end{proof}

\begin{corollary}\label{dim hermitian grassmannians}
$\dim(\langle\HH_k\rangle) = {N\choose k}$.
\end{corollary}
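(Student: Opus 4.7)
The corollary follows almost immediately from Theorem~\ref{main-herm}, so the plan amounts to collecting what is already on the table. Specifically, Theorem~\ref{main-herm} has just established the equality $\langle\HH_k\rangle = V_k$, where $V_k = \bigwedge^kV$. Since $\dim_{\FF}(\bigwedge^k V) = \binom{N}{k}$ is a standard fact about exterior powers of an $N$-dimensional vector space (the basis $E_k$ recalled in Section~\ref{preliminaries} has $\binom{N}{k}$ elements), I would simply cite Theorem~\ref{main-herm} and this basis count to conclude $\dim(\langle\HH_k\rangle) = \binom{N}{k}$.

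If the intention of the corollary is really to record the dimension in the (possibly degenerate) Hermitian setting, i.e.\ to close the proof of Part~\ref{pt1} of Theorem~\ref{Main Theorem}, then I would proceed in two steps. First, in the non-degenerate case, invoke Theorem~\ref{main-herm} to get $\dim(\langle\overline{\HH}_h\rangle) = \binom{N-d}{h}$ for every $1 \leq h \leq n$, with $\overline{V}$ the $(N-d)$-dimensional complement of $R$ carrying the non-degenerate Hermitian form $\overline{h}$. Second, feed this hypothesis into Part~\ref{pt1a} of Theorem~\ref{prel novo 3}, which concludes $\dim(\langle\HH_k\rangle) = \binom{N}{k}$.

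There is no real obstacle to overcome at this stage; all the substantive work sits in Theorem~\ref{main-herm} (which handled the delicate inductions on $d_0$ and $k$, using the hyperbolic-pair reconstruction of Lemma~\ref{l2} and the orthogonal decomposition of Lemma~\ref{l11}) and, where needed, in the binomial identity of Lemma~\ref{lemma-sum} that powers Theorem~\ref{prel novo 3}. The corollary itself is therefore just a dimension count combined with an appeal to the preceding theorem.
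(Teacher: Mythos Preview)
Your proposal is correct and matches the paper's approach: the corollary is an immediate consequence of Theorem~\ref{main-herm} together with $\dim(\bigwedge^k V)=\binom{N}{k}$, and the paper treats it as such without giving a separate proof. Your second paragraph is superfluous for the corollary itself, since Section~\ref{Hermitian} works entirely in the non-degenerate setting; the passage to the degenerate case via Theorem~\ref{prel novo 3} is handled by the paper in the paragraph \emph{after} the corollary, exactly as you outline.
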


Theorem~\ref{main-herm} and Corollary~\ref{dim hermitian grassmannians} yield Part~\ref{pt1} of Theorem~\ref{Main Theorem} in the non-degenerate case. As noticed at the beginning of this section, the complete statement of
Part~\ref{pt1} of Theorem~\ref{Main Theorem} follows by combining this partial result with Theorem~\ref{prel novo 3}.

\section{Symplectic $k$-Grassmannians }\label{Bsymp}

As recalled in Subsection~\ref{Survey}, Part~\ref{pt2} of Theorem~\ref{Main Theorem} holds in the non-degenerate case. By Theorem~\ref{prel novo 3}, it holds in the general case as well: $\dim(\langle\bS_k\rangle) = {N\choose k}-{N\choose{k-2}}$ provided that $k \leq n$, no matter how large the defect of the underlying alternating form can be.

Our goal in this section is to describe a generating set for $\langle\bS_k\rangle$ for $1\leq k \leq n$. This description will be crucial to prove the main result of Subsection~\ref{even char}.

Let $s\colon V\times V\to\FF$ be an alternating bilinear form with Witt index $n$ and singular defect $d =N-2n$, where $N = \dim(V)$. It is always possible to choose a basis $E=(e_1,e_2,\dots,e_N)$ of $V$ such that
\[ s(\sum_{i=1}^N x_ie_i,\sum_{i=1}^N y_ie_i)= \sum_{i=1}^{n}(x_{2i-1}y_{2i}-x_{2i}y_{2i-1}), \]
see~\cite[\S 5]{BA9}.
The subspace $\langle e_{2n+1},\dots, e_N\rangle$ is the radical of $s$. In the sequel, it will be convenient to keep a record of the form $s$ in our notation.  Thus, we write $\bS_k(s)$ instead of $\bS_k$. A basis
of $\langle \bS_k(s)\rangle $ when $s$ is non-degenerate (namely $d = 0$) is explicitly described by De Bruyn~\cite{B09} for arbitrary fields (see also
Premet and Suprunenko \cite{Premet} for fields of odd characteristic).
In this section we shall provide a generating set $E_k(s)$ for $\langle \bS_k(s) \rangle$ in the general case.

We first  introduce some notation. For $J = \{j_1, j_2,\dots, j_\ell\}\subseteq \{1, 2,\dots, N\}$ with $j_1 < j_2 <\dots < j_\ell$ define
\[e_J:=e_{j_1}\wedge e_{j_2}\wedge\dots\wedge e_{j_\ell}\]
with $e_\emptyset = 1$ by convention when $J = \emptyset$.

For $A \subseteq \{1, 2,\dots, n\}$, put $(2A-1):=\{ 2i-1 : i\in A\}$, $(2A):=\{ 2i : i\in A\}$ and define
\[e^+_A:=e_{(2A-1)} \quad \mbox{and} \quad e^-_A:=e_{(2A)}.\]
In particular, $e_{\emptyset}^+=e_{\emptyset}^-=1$ when $A = \emptyset$. For $1 \leq i < j \leq n$ let
\[u_i:=e_{2i-1}\wedge e_{2i} ~~ \mbox{and} ~~  u_{i,j}:=e_{2i-1}\wedge e_{2i}-e_{2j-1}\wedge e_{2j}.\]
Consider a set  $C=\{C_1,\dots,C_h\}$ where $C_1:=\{i_1,j_1\}$, $C_2:=\{i_2,j_2\},\dots,C_h=\{i_h,j_h\}$ are disjoint pairs
of elements of $\{1,2,\dots, n\}$, with the further assumptions $i_1< i_2<\dots< i_h$ and $i_r<j_r$ for $r= 1, 2,\dots, h$.
Let also $\overline{C}:=\{i_1,\dots,i_h,j_1,\dots,j_h\}$; clearly $|\overline{C}|=2h$. Given such a set $C$, let
\[u_C:=u_{i_1,j_1}\wedge u_{i_2,j_2}\wedge\dots\wedge u_{i_h,j_h} ~ ~\mbox{and}~~ u_{\emptyset}:=1.\]

\begin{setting}\label{condition}
In the sequel $(A,B,C,D)$ always stands for a quadruple with
\[A,B\subseteq\{1,\dots,n\}, ~ D\subseteq\{2n+1,2n+2,\dots,N\} ~~\mbox{and}~~ C=\{C_1,\dots,C_h\}\]
where $C_1:=\{i_1,j_1\}$, $C_2:=\{i_2,j_2\},\dots,C_h=\{i_h,j_h\}$ are disjoint pairs
of elements of $\{1,2,\dots,n\}$ such that $1\leq i_1< i_2<\dots< i_h$, $i_r<j_r\leq n$. Moreover, with $\overline{C}:=\{i_1,\dots,i_h,j_1,\dots,j_h\}$, we assume that $A,B,\overline{C}$ and $D$ are pairwise disjoint with $|A\cup B\cup \overline{C}\cup D|=k$.
\end{setting}

With $(A,B,C,D)$ as in Setting~\ref{condition}, put $e_{A,B,C,D}:=e_A^+\wedge e_B^-\wedge u_C\wedge e_D$. Define
\begin{equation}\label{base}
 E_k(s):=\{e_{A,B,C,D}\colon (A,B,C,D)\,\, \mbox{as in Setting } \ref{condition}\}.
\end{equation}
For short, denote by $e_{A,B,C}$ the factor $e^+_A\wedge e^-_B\wedge u_C$ of $e_{A,B,C,D} = (e^+_A\wedge e^-_B\wedge u_C)\wedge e_D$. Clearly, $e_{A,B,C} \in \bigwedge^{k-r}\overline{V}$, where $\overline{V} := \langle e_1, e_2,\dots, e_{2n}\rangle$ and $r = |D|$. The numbers $k-r = |A|+|B|+ 2|C| = |A|+|B|+|\overline{C}|$ and $r$ will be called the \emph{rank} and the
\emph{corank} of $e_{A,B,C}$, respectively.

Given $r \leq \min(d,k)$, the set of vectors $e_{A,B,C}$ of corank $r$ as defined above coincides with the set $E_{k-r}(\overline{s})$ defined as
in~\eqref{base}, but with $k-r$ instead of $k$ and $s$ replaced by its restriction $\overline{s}$ to $\overline{V}\times \overline{V}$. The form $\overline{s}$ is non-degenerate and $E_{k-r}(\overline{s})$ is a standard generating set for $\langle\bS_{k-r}(\overline{s})\rangle$ (see e.g. De Bruyn~\cite{B09}; also
\cite{Premet} in odd characteristic). Accordingly, the set
\[E_{k,r}(s) := \{e_{A,B,C,D}\in E_k(s) : |D| = r\} = E_{k-r}(\overline{s})\wedge\{e_D : D\subseteq \{2n+1,\dots, N\}, |D| = r\}\]
is a generating set for $\langle\bS_{k-r}(\overline{s})\rangle\wedge\bigwedge^rR$, where $R := \mathrm{Rad}(s) = \langle e_{2n+1},\dots, e_N\rangle$. By this fact,
Lemma~\ref{prel novo 1} and the fact that $E_k(s)$ is the disjoint union of the sets
$E_{k,0}(s), E_{k,1}(s),\dots, E_{k, m}(s)$ (where $m := \min(k,d)$), we immediately obtain the main result of this section:

\begin{lemma}\label{dim symplectic grassmannians}
If $k \leq n$ then $E_k(s)$ is a generating set for $\langle \bS_k(s)\rangle$.
\end{lemma}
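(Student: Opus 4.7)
The plan is to reduce to the non-degenerate case via the decomposition $V = \overline{V}\oplus R$ already exploited in Lemma~\ref{prel novo 1}, and then invoke the known description of a generating set for $\langle\bS_{k-r}(\overline{s})\rangle$ when $\overline{s}$ is non-degenerate. Concretely, every element $e_{A,B,C,D}\in E_k(s)$ factors as $e_{A,B,C,D}=e_{A,B,C}\wedge e_D$, where $e_{A,B,C}=e_A^+\wedge e_B^-\wedge u_C$ lies in $\bigwedge^{k-r}\overline{V}$ (its rank) and $e_D\in\bigwedge^r R$ (its corank), with $r=|D|$. Since $A,B,\overline{C}$ and $D$ are forced by Setting~\ref{condition} to be pairwise disjoint, the restriction $r\mapsto|D|$ partitions the set as the disjoint union
\[
E_k(s) \;=\; \bigsqcup_{r=0}^{\min(k,d)} E_{k,r}(s),\qquad E_{k,r}(s)=E_{k-r}(\overline{s})\wedge\{e_D:D\subseteq\{2n+1,\dots,N\},\ |D|=r\}.
\]

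Next, I would record that $\overline{s}$ is a non-degenerate alternating form on the $2n$-dimensional space $\overline{V}$, and that the factor $e_{A,B,C}$ runs precisely over the set $E_{k-r}(\overline{s})$ defined by the same recipe \eqref{base} with $k$ replaced by $k-r$ and $s$ replaced by $\overline{s}$. Here one may cite the non-degenerate result (De Bruyn~\cite{B09}, or Premet and Suprunenko~\cite{Premet} in odd characteristic), which states that $E_{k-r}(\overline{s})$ spans $\langle\bS_{k-r}(\overline{s})\rangle$. Since $\bigwedge^r R$ is spanned by the pure wedge products $e_D$ with $|D|=r$, and since $\overline{V}\cap R=0$, one deduces that
\[
\langle E_{k,r}(s)\rangle \;=\; \langle\bS_{k-r}(\overline{s})\rangle\wedge\bigwedge^r R.
\]

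Summing over $r$ and combining with Lemma~\ref{prel novo 1}, which identifies
\[
\langle\bS_k(s)\rangle \;=\; \bigoplus_{r=0}^{\min(k,d)}\langle\bS_{k-r}(\overline{s})\rangle\wedge\bigwedge^r R,
\]
yields $\langle E_k(s)\rangle=\langle\bS_k(s)\rangle$, which is the desired conclusion.

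The only substantive input is the non-degenerate statement that $E_{k-r}(\overline{s})$ generates $\langle\bS_{k-r}(\overline{s})\rangle$; this is the genuine content that must be borrowed from the literature, and I expect it to be the main (indeed only) obstacle in the argument. Everything else—the factorization into rank and corank, the orthogonality $\overline{V}\perp R$ used implicitly in the wedge decomposition, and the final sum—is a bookkeeping consequence of Lemma~\ref{prel novo 1} and the definition of $E_k(s)$ in~\eqref{base}.
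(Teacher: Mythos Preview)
Your proposal is correct and follows essentially the same argument as the paper: factor each $e_{A,B,C,D}$ into its rank part $e_{A,B,C}\in E_{k-r}(\overline{s})$ and corank part $e_D\in\bigwedge^r R$, invoke the non-degenerate result of De Bruyn~\cite{B09} (or~\cite{Premet}) to see that $E_{k,r}(s)$ spans $\langle\bS_{k-r}(\overline{s})\rangle\wedge\bigwedge^r R$, and then assemble the pieces via Lemma~\ref{prel novo 1}. The paper presents this reasoning in the paragraph immediately preceding the lemma rather than as a formal proof, but the logical content is identical to what you wrote.
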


\section{Orthogonal $k$-Grassmannians}\label{orthogonal}

In this section we shall prove Part~\ref{pt3} of Theorem~\ref{Main Theorem}. We firstly deal with the non-degenerate case. Having done that, a few words will be enough to fix the general case. We will treat separately the cases in which $\ch(\FF)$ is odd or even.

\subsection{The non-degenerate case in odd characteristic}\label{odd char}

Suppose $\ch(\FF)\neq 2$. Let $q:V\to\FF$ be a non-degenerate quadratic form of Witt index $n$ and anisotropic defect $\mathrm{def}_0(q) = d_0 = N-2n$. It is always possible to choose a basis $E=(e_1,\dots,e_{2n},e_{2n+1},\dots,e_N)$ of $V$ and $\kappa_{2n+1},\dots,\kappa_{N}\in\FF$  such that
\begin{equation}\label{quadform}
  q(\sum x_ie_i)=\sum_{i=1}^n x_{2i-1}x_{2i}+\sum_{j=2n+1}^N \kappa_j x_j^2,
\end{equation}
where each pair $(e_{2i-1},e_{2i})$ for $i=1,\dots,n$ is hyperbolic and the space  $\langle e_{2n+1},\dots,e_N\rangle$ is anisotropic in $\FF$, i.e.
\[ \sum_{j=2n+1}^N\kappa_jx_j^{2}=0 \Leftrightarrow x_{2n+1}=x_{2n+2}=\dots = x_N=0; \]
see~\cite[\S 6]{BA9}.
As in Section \ref{preliminaries}, $\cQ_k$ is the polar $k$-Grassmannian associated to $q$ and $\QQ_k = \varepsilon_k(\cQ_k)$ is its image by the Grassmann embedding. Given a subset $J \subset \{1, 2,\dots, N\}$, $q_J$ is the form induced by $q$ on $V_J$ and, for a positive integer $t$ not greater than the Witt index of $q_J$, $\cQ_{J,t}$ is the $t$-Grassmannian associated to $q_J$ and $\QQ_{J,t} = \varepsilon_t(\cQ_t)$. Let  $b:V\times V\to\FF$ be the bilinear form associated to $q$, i.e.
\[ b\left(\sum_{i=1}^N e_ix_i,\sum_{j=1}^N e_jy_j\right)= \sum_{i=1}^n(x_{2i-1}y_{2i}+x_{2i}y_{2i-1})+2\sum_{j=2n+1}^N \kappa_jx_{j}y_j.\]
The content of the following Lemma has been proved in~\cite{IP13} for $k<n$. In order to keep our treatment as self-contained as possible (and to clarify
that we do not take here the assumption $k<n$) we shall provide
a new and more elementary proof.

\begin{lemma}\label{pre-quad-odd}
Suppose $d_0=0$. Then $\langle\QQ_k\rangle=V_k$ for all $k \leq n$.
\end{lemma}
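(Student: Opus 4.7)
The plan is to mirror the induction used for the Hermitian case (Lemmas~\ref{l2} and~\ref{difetto zero}), adapted to a quadratic form through a linear algebra step that uses $\ch(\FF)\neq 2$ once and only once.

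First I would establish the orthogonal analogue of Lemma~\ref{l2}: for distinct $i,j\in\{1,\dots,n\}$ and $J=\{2i-1,2i,2j-1,2j\}$, the span $\langle\QQ_{J,2}\rangle$ equals $V_{J,2}$. The four ``mixed'' basis wedges $e_{2i-1}\wedge e_{2j-1}$, $e_{2i-1}\wedge e_{2j}$, $e_{2i}\wedge e_{2j-1}$, $e_{2i}\wedge e_{2j}$ lie in $\QQ_{J,2}$ because each is a wedge of singular vectors from two mutually orthogonal hyperbolic planes. To also reach the two ``diagonal'' wedges $e_{2i-1}\wedge e_{2i}$ and $e_{2j-1}\wedge e_{2j}$, I would exhibit the two singular and mutually orthogonal pairs
\[
(e_{2i-1}+e_{2j},\;e_{2i}-e_{2j-1})\quad\text{and}\quad(e_{2i-1}+e_{2j-1},\;e_{2i}-e_{2j}),
\]
whose Pl\"ucker images, by a direct expansion using $b(e_{2r-1},e_{2r})=1$ and the orthogonality of the two hyperbolic planes, are
\[
e_{2i-1}\wedge e_{2i}+e_{2j-1}\wedge e_{2j}+w_1\quad\text{and}\quad e_{2i-1}\wedge e_{2i}-e_{2j-1}\wedge e_{2j}+w_2,
\]
where $w_1,w_2$ are combinations of mixed wedges and so already lie in $\langle\QQ_{J,2}\rangle$. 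The $2\times 2$ coefficient matrix $\bigl(\begin{smallmatrix}1&1\\1&-1\end{smallmatrix}\bigr)$ has determinant $-2$, invertible in $\FF$ because $\ch(\FF)\neq 2$; hence both diagonal vectors also lie in $\langle\QQ_{J,2}\rangle$.

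With this step in hand, the rest of the proof is the induction on $k$ of Lemma~\ref{difetto zero}, essentially verbatim. For $k=1$ we use the standing hypothesis that $V$ is spanned by singular vectors. For the inductive step, take a basis wedge $e_T$ with $T\subseteq\{1,\dots,2n\}$ of size $k+1\leq n$; after a harmless reindexing, assume the smallest element of $T$ is in $\{1,2\}$. If $T$ contains no complete hyperbolic pair meeting $\{1,2\}$, split off the singular vector $e_{j_0}$ with $j_0\in T\cap\{1,2\}$ and appeal to the inductive hypothesis applied to the non-degenerate restriction $q_{\{3,\dots,2n\}}$, concluding via Lemma~\ref{l11}. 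If $T\supseteq\{1,2\}$, pick, as in the Hermitian case, a pair $\{2j-1,2j\}$ with $j\geq 2$ disjoint from $T\setminus\{1,2\}$ (which exists since $|T|-2\leq n-2$); then, setting $J=\{1,2,2j-1,2j\}$ and $I=\{1,\dots,2n\}\setminus J$, the wedge $e_1\wedge e_2$ belongs to $V_{J,2}=\langle\QQ_{J,2}\rangle$ by the first step, while $e_{T\setminus\{1,2\}}\in\langle\QQ_{I,k-1}\rangle$ by induction applied to the non-degenerate restriction $q_I$; a final application of Lemma~\ref{l11} gives $e_T\in\langle\QQ_{k+1}\rangle$.

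The only delicate point is the linear algebra in the first step: it relies on inverting $2$ in $\FF$, and breaks down in characteristic $2$. This is consistent with Part~\ref{pt3} of Theorem~\ref{Main Theorem}, where in even characteristic the dimension drops from $\binom{N}{k}$ to $\binom{N}{k}-\binom{N}{k-2}$, and is precisely what forces the separate, heavier treatment of the even-characteristic case in Subsection~\ref{even char}.
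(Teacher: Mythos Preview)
Your proof is correct and follows essentially the same approach as the paper: both establish the $k=2$ case via two wedge products of singular, mutually orthogonal pairs drawn from two hyperbolic planes, producing combinations of $e_{2i-1}\wedge e_{2i}$ and $e_{2j-1}\wedge e_{2j}$ that can be solved because $2$ is invertible, and then run an induction using Lemma~\ref{l11}. The only differences are cosmetic---your specific singular pairs differ slightly from the paper's $u_1,\dots,u_4$, and you organize the inductive step along the lines of Lemma~\ref{difetto zero} (two cases according to whether $\{1,2\}\subseteq T$), whereas the paper phrases it as splitting off a single hyperbolic pair $X_i$ meeting $J$.
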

\begin{proof} Clearly, $\langle\QQ_k\rangle\subseteq V_k$. To prove $\langle\QQ_k\rangle\supseteq V_k$ we argue by induction on $k$. When $k=1$ it is well known that the polar space $\QQ_1$ spans $V$. We now show that $\langle\QQ_2\rangle\supseteq V_2$.
Indeed, we shall prove that for all $i,j$ with $1\leq i<j\leq n$ we have $e_i\wedge e_j\in\langle\QQ_2\rangle$. If $\{i,j\}\neq\{2x-1,2x\}$ for
 some $x\in\{1,\dots,n\}$, then the vectors $e_i$ and $e_j$  are $q$-singular and mutually orthogonal.
Hence  $\langle e_i, e_j\rangle$ is a $q$-singular line and so $\langle e_i\wedge e_j\rangle\in \QQ_2$. Suppose  $\{i,j\}=\{2x-1,2x\}$ for
 some $x\in\{1,\dots,n\}$, take $h\neq x$ and let
  \[ u_1=e_{2x-1}-e_{2h-1},\qquad
    u_2=e_{2x}+e_{2h}, \]
  \[ u_3=e_{2x-1}-e_{2h},\qquad
    u_4=e_{2x}+e_{2h-1}.\]
By construction $q(u_1)=q(u_2)=q(u_3)=q(u_4)=0$ and $b(u_1,u_2)=b(u_3,u_4)=0$ hence
$\langle u_1\wedge u_2\rangle,  \langle u_3\wedge u_4\rangle\in\QQ_{2}$. Furthermore,
  \[ u_1\wedge u_2+u_3\wedge u_4=2(e_{2x-1}\wedge e_{2x})+w+w' \]
with $w=e_{2x-1}\wedge e_{2h}-e_{2h-1}\wedge e_{2x}\in\langle\QQ_2\rangle$ and
$w'=e_{2x-1}\wedge e_{2h-1}-e_{2h}\wedge e_{2x}\in\langle\QQ_2\rangle$. So, $e_{2x-1}\wedge e_{2x}=e_{i}\wedge e_j\in\langle\QQ_2\rangle$.

Take now $k\geq 2$ and suppose that $\langle\QQ_k\rangle=V_k$; we claim $\langle\QQ_{k+1}\rangle=V_{k+1}$.
Let $J\subseteq\{1,\dots,2n\}$ with $|J|=k+1$. We show that $e_J\in\langle\QQ_{k+1}\rangle$. Since $k+1\leq n$, clearly $k<n$.
Take $i\in\{1,\dots,n\}$ such that $X_i\cap J\neq\emptyset$ where $X_i=\{2i-1,2i\}$. Let also $I_i :=\{1,\dots,2n\}\setminus X_i$. Put $t:=|X_i\cap J|$; clearly $t\in\{1,2\}$. By the inductive hypothesis on $k$, $e_{J\setminus X_i}\in\langle\QQ_{I_i,k-t+1}\rangle$ and $e_{J\cap X_i}\in\langle\QQ_{X_i,t}\rangle$. (Note that $q_{I_i}$ is non-degenerate with anisotropic defect $0$). So, by Lemma~\ref{l11}, being $X_i$  disjoint from $I_i$  and $V_{X_i}$ orthogonal to $V_{I_i}$, we have $e_J\in\langle\QQ_{X_i,t}\rangle\wedge\langle\QQ_{I_i,k-t+1}\rangle \subseteq\langle\QQ_{X_i\cup I_i, k+1}\rangle = \langle\QQ_{k+1}\rangle.$
\end{proof}

 \begin{theorem} \label{main-quad-odd}
We have $\langle\QQ_k\rangle=V_k$ for any value of the anisotropic defect $d_0$ of $q$ and any positive integer $k\leq n$.
\end{theorem}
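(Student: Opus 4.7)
My plan is to mimic the strategy used for Theorem~\ref{main-herm} in the Hermitian case, with the ingredient that replaces the trace argument being the construction of a singular vector via the quadratic form, which works because $\ch(\FF)\neq 2$. I would argue by a double induction: the outer induction is on the anisotropic defect $d_0$ and the inner one on $k$. The base $d_0=0$ is precisely Lemma~\ref{pre-quad-odd}, while the base $k=1$ is immediate since the polar space $\cQ_1$ spans $V$.

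For the inductive step, fix $d_0>0$, $k>1$ and a subset $J\subseteq\{1,\dots,N\}$ with $|J|=k$; I want $e_J\in\langle\QQ_k\rangle$. Set $s:=k-|J\cap\{1,\dots,2n\}|$. If $s=0$, then $e_J\in V_{\{1,\dots,2n\},k}$ and $q_{\{1,\dots,2n\}}$ is non-degenerate with anisotropic defect $0$, so Lemma~\ref{pre-quad-odd} applied to it gives the conclusion. If $s>0$ I may assume $N\in J$, and write $e_J=e_{J\setminus\{N\}}\wedge e_N$. Since $|J\setminus\{N\}|=k-1<n$, some hyperbolic pair $X_i=\{2i-1,2i\}$ is disjoint from $J\setminus\{N\}$.

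The key step, specific to odd characteristic, is producing $e_N$ as an element of $\langle\QQ_{\{2i-1,2i,N\},1}\rangle$. With $q(e_N)=\kappa_N$, I set
\[ u := e_{2i-1}-\kappa_N e_{2i}+e_N, \]
and compute $q(u)=-\kappa_N+\kappa_N=0$, so $\langle u\rangle\in\QQ_{\{2i-1,2i,N\},1}$; together with $\langle e_{2i-1}\rangle$ and $\langle e_{2i}\rangle$, this gives $e_N\in\langle\QQ_{\{2i-1,2i,N\},1}\rangle$. Setting $I_i:=\{1,\dots,N-1\}\setminus X_i$, the restriction $q_{I_i}$ is non-degenerate with anisotropic defect $d_0-1$ and Witt index $n-1\geq k-1$. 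By the inductive hypothesis (either on $d_0$, or on $k$ combined with the fact that $q_{I_i}$ has smaller anisotropic defect), $\langle\QQ_{I_i,k-1}\rangle=V_{I_i,k-1}$, so in particular $e_{J\setminus\{N\}}\in\langle\QQ_{I_i,k-1}\rangle$.

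Finally, $I_i\cap\{2i-1,2i,N\}=\emptyset$ and $V_{I_i}\perp V_{\{2i-1,2i,N\}}$ with respect to $b$ (the hyperbolic pairs inside $V_{I_i}$ are orthogonal to $X_i$, the anisotropic vectors $e_{2n+1},\dots,e_{N-1}$ in $V_{I_i}$ are orthogonal both to $X_i$ and to $e_N$ by the diagonal form of $q$ on the anisotropic part). Therefore Lemma~\ref{l11} yields
\[ e_J \in \langle\QQ_{I_i,k-1}\rangle\wedge\langle\QQ_{\{2i-1,2i,N\},1}\rangle \subseteq \langle\QQ_{I_i\cup\{2i-1,2i,N\},k}\rangle \subseteq \langle\QQ_k\rangle, \]
which completes the induction. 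The only non-routine point is the choice of the singular vector $u$; all other steps are a bookkeeping exercise entirely parallel to the Hermitian case, and no genuine obstacle is expected in the odd-characteristic setting.
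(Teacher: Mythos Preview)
Your proof is correct and follows essentially the same route as the paper's: induction on $d_0$ with base case Lemma~\ref{pre-quad-odd}, reduction to the case $N\in J$, choice of a hyperbolic pair $X_i$ disjoint from $J$, and application of Lemma~\ref{l11} to $\langle\QQ_{I_i,k-1}\rangle\wedge\langle\QQ_{\{2i-1,2i,N\},1}\rangle$. The only cosmetic difference is that the paper produces $e_N$ from two singular vectors $v_1=e_{2i-1}-\kappa_N e_{2i}+e_N$ and $v_2=e_{2i-1}+\kappa_N e_{2i}-e_N$ via $v_1-v_2$, whereas you use the single vector $u$ together with $e_{2i-1},e_{2i}$; both work and the rest is identical.
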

\begin{proof}
Clearly, $\langle\QQ_k\rangle\subseteq V_k$. To prove $\langle\QQ_k\rangle\supseteq V_k$ we argue by induction on $d_0$.
The case $d_0=0$ is settled in Lemma~\ref{pre-quad-odd}. Suppose $d_0>0$.

Let $J\subseteq\{1,2,\dots, N\}$ with $|J|=k$. If $N\not\in J$, then $e_J\in V_{I,k}$ where $I := \{1, 2,\dots, N-1\}$. On the other hand, the form $q_I$ induced by $q$ on $V_I$ is non-degenerate with anisotropic defect $d_0-1$. So $\langle\QQ_{I,k}\rangle=V_{I,k}$ by the inductive hypothesis. Hence $e_J\in V_{I,k}=\langle\QQ_{I,k}\rangle\subseteq \langle\QQ_k\rangle$.

Suppose $N\in J$ and let $\kappa_N=q(e_N)$. So, $e_J = e_{J\setminus\{N\}}\wedge e_N$. Since $|J|\leq k$ and $N>2n$, we have $|J\setminus \{N\}|\leq k-1\leq n-1$. Hence there is necessarily at least one index $i\in\{1,\dots,n\}$ such that $\{2i-1, 2i\}\cap J=\emptyset$. For such a choice of $i$, let  $v_1:=e_{2i-1}-\kappa_N e_{2i}+e_N$ and  $v_2:=e_{2i-1}+\kappa_N e_{2i}-e_N$; so $\langle v_1\rangle, \langle v_2\rangle\in\QQ_{\{2i-1,2i,N\},1}$. Clearly, $v_1-v_2=-2\kappa_N e_{2i}+2e_N$. So $e_N\in\langle\QQ_{\{2i-1,2i,N\},1}\rangle$. On the other hand, $e_{J\setminus\{N\}}\in V_{I_i,k-1}$, where $I_i := \{1,2,\dots, N-1\}\setminus\{2i-1,2i\}$. The form $q_{I_i}$ is non-degenerate and has anisotropic defect $d_0-1$. Then $V_{I_i,k-1}= \langle\QQ_{I_i,k-1}\rangle$ by induction on $d_0$ (or on $k$). Consequently, $e_{J\setminus\{N\}}\in \langle\QQ_{I_i,k-1}\rangle$. It follows that  \[e_J = e_{J\setminus\{N\}}\wedge e_N \in \langle\QQ_{I_i,k-1}\rangle\wedge \langle\QQ_{\{2i-1,2i,N\},1}\rangle.\]
However $\langle\QQ_{I_i,k-1}\rangle\wedge \langle\QQ_{\{2i-1,2i,N\},1}\rangle\subseteq \langle\QQ_{I_i\cup\{2i-1,2i,N\},k}\rangle$ by Lemma~\ref{l11} and $\QQ_{I_i\cup\{2i-1,2i,N\},k} = \QQ_k$. Therefore $e_J\in\langle\QQ_k\rangle$. This completes the proof.
\end{proof}

\begin{corollary}\label{part 2. main theorem odd}
$\dim(\langle\QQ_k\rangle)={N\choose k}$.
\end{corollary}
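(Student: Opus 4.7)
The plan is to deduce the corollary immediately from Theorem~\ref{main-quad-odd}. That theorem asserts the equality $\langle\QQ_k\rangle=V_k$ in full generality for $k\leq n$ and any anisotropic defect $d_0$, with no hypothesis left to check. Since $V_k=\bigwedge^k V$ has dimension $\binom{N}{k}$ as an $\FF$-vector space (this is the standard fact recalled in Section~\ref{preliminaries}, where the basis $E_k$ is exhibited), taking dimensions of both sides yields $\dim(\langle\QQ_k\rangle)=\binom{N}{k}$.

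There is essentially no obstacle: no further induction, no case analysis, and no interaction with the defects $d$ or $d_0$ is needed at this stage, because all of that work has already been absorbed into Theorem~\ref{main-quad-odd} (which handled the non-degenerate case) and into the reduction machinery of Section~\ref{preliminaries} (Theorem~\ref{prel novo 3}\eqref{pt1a}) that will combine Theorem~\ref{main-quad-odd} with the decomposition along $\mathrm{Rad}(q)$ to cover the degenerate quadratic case in odd characteristic. In short, the corollary is a one-line consequence, and the only thing to write is: apply Theorem~\ref{main-quad-odd} and read off the dimension.
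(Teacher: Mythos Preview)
Your proposal is correct and matches the paper's approach: the corollary is an immediate consequence of Theorem~\ref{main-quad-odd}, since $\langle\QQ_k\rangle=V_k$ and $\dim V_k=\binom{N}{k}$. The paper in fact gives no separate proof for this corollary, and your remarks about the degenerate case via Theorem~\ref{prel novo 3} are accurate but extraneous here, as Corollary~\ref{part 2. main theorem odd} is stated within Section~\ref{odd char}, which treats only the non-degenerate case.
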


\begin{remark}
  Theorem~\ref{main-quad-odd} includes Theorem~1.1 of~\cite{IP13} as the special case where $d = 1$, but the proof given in~\cite{IP13} is not as easy as  our proof of Theorem~\ref{main-quad-odd}. Note also that we have obtained
  Theorem~\ref{main-quad-odd} from Lemma~\ref{pre-quad-odd} while in~\cite{IP13} the analogue of our Lemma~\ref{pre-quad-odd} is obtained as a consequence of Theorem~1.1 of that paper.
\end{remark}

\subsection{The non-degenerate case in even characteristic} \label{even char}

Let now $\ch(\FF)=2$ and $q:V\to\FF$ be a non-degenerate quadratic form of Witt index $n$ and anisotropic defect $\mathrm{def}_0(q) = d_0 = N-2n$ as in Equation~\eqref{quadform}. As $\ch(\FF)=2$, the anisotropic part of the expression of $q$ can assume a more complex form than~\eqref{quadform}.
Actually, it is always  possible to determine a basis
\[E=(e_1,e_2,\dots,e_{2n},e_{2n+1},\dots,e_{2n+2m},e_{2n+2m+1},\dots,e_N)\]
of $V$ such that $q$ can be written as
\begin{equation}\label{eqeven}
  q\left(\sum_{i=1}^N x_ie_i\right)=q_0\left(\sum_{i=1}^{2n}x_ie_i\right)+  q_1\left(\sum_{i=2n+1}^{2n+2m}x_ie_i\right)+q_2\left(\sum_{i=2n+2m+1}^{N}x_ie_i\right),
\end{equation}
where
\[q_0\left(\sum_{i=1}^{2n}x_ie_i\right)=\sum_{i=1}^nx_{2i-1}x_{2i},\]
\[q_1\left(\sum_{i=2n+1}^{2n+2m}x_ie_i\right)=\sum_{i=1}^m(x_{2n+2i-1}x_{2n+2i} + \lambda_ix_{2n+2i-1}^2 + \mu_ix_{2n+2i}^2),\]
\[q_2\left(\sum_{i=2n+2m+1}^Nx_ie_i\right)= \sum_{j=2n+2m+1}^N\kappa_jx_j^2,\]
and $\lambda_i,\mu_i,\kappa_j\in \FF$ are such that the form
\[ q_{1,2}(\sum_{i=2n+1}^N x_{i}e_i):=q_1(\sum_{i=2n+1}^mx_ie_i)+q_2(\sum_{i=2n+2m}^Nx_ie_i) \]
defined on $V_{\{2n+1,\dots,N\}}$ is totally anisotropic; see, for instance, \cite[Proposition 7.31]{EKM08} and also \cite{BA9}.
In particular, for any $2n+2m+1\leq i,j\leq N$ each of the ratios $\kappa_i/\kappa_j$ must be a non-square in $\FF$ and the equations $\lambda_it^2+t+\mu_i=0$ must admit no solution in $\FF$. We will say that $q$ has parameters $[n,m,d'_0]$ where $n$ is its Witt index and $d'_0 :=N-2n-2m$. Clearly, $d_0 = 2m+d'_0$.

Denote by $f_q\colon V\times V\rightarrow \FF$, $f_q(x,y):=q(x+y)+q(x)+q(y)$ the alternating bilinear form polarizing  $q$. Explicitly, by Equation~\eqref{eqeven}, we have
\[ f_q(\sum_{i=1}^{N}x_ie_i,\sum_{i=1}^Ny_ie_i)=
  \sum_{i=1}^{n+m}(x_{2i-1}y_{2i}+x_{2i}y_{2i-1}). \]
Note that the Witt index of $f_q$ is $n+m+d'_0$ and $d'_0 = \dim(\mathrm{Rad}(f_q))$. So, $f_q$ is non-degenerate if and only if $d'_0 = 0$. Also, $n+m$ is the reduced Witt index of $f_0$.

Any totally singular $k$-space for $q$ is necessarily totally singular for $f_q$, but the converse does not hold.
This gives  $\cQ_k\subset\cS_k(f_q)$,  where by $\cS_k(f_q)$ we mean the symplectic $k$-Grassmannian related to
the form $f_q$. Consequently, $\langle\QQ_k\rangle\leq \langle \bS_k(f_q)\rangle$, where $\bS_k(f_q)$ is the image of $\cS_k(f_q)$ under the Grassmann embedding. In the remainder of this section we shall prove that actually $\langle\QQ_k\rangle=\langle \bS_k(f_q)\rangle$.

We shall stick to the notation of Sections~\ref{preliminaries} and~\ref{Bsymp}. In particular, $E_k(f_q)$ is the generating set for $\langle\bS_k\rangle$ defined in~\eqref{base}. We add the following to the notation of Section~\ref{Bsymp}.
Given $X\subseteq\{1,2\dots,n\}$ we write $[X]$ for $\{ 2x-1,2x \colon x\in X\}$.

The statement of the next lemma is proved in~\cite[Proposition 4.1(2)]{IP13} for $k < n$. We give an easier proof here, which works also for the case $k = n$.

\begin{lemma}\label{q-even-form}
Let $d_0 = 0$. Then $\langle\QQ_k\rangle=\langle \bS_k(f_q)\rangle$ for any $1\leq k \leq n$.
\end{lemma}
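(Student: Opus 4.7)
The inclusion $\langle \QQ_k \rangle \subseteq \langle \bS_k(f_q) \rangle$ is already noted in the excerpt, so everything reduces to the reverse containment. My plan is to exhibit each element of a spanning set of $\langle \bS_k(f_q)\rangle$ as a linear combination of Pl\"ucker images of totally $q$-singular $k$-spaces. Since $d_0 = 0$ forces $m = 0$ and hence $\mathrm{Rad}(f_q) = 0$, Lemma~\ref{dim symplectic grassmannians} gives such a spanning set: the vectors $e_{A,B,C} = e_A^+ \wedge e_B^- \wedge u_C$ where $(A,B,C)$ runs over quadruples with $D = \emptyset$ as in Setting~\ref{condition}. It suffices therefore to prove $e_{A,B,C} \in \langle \QQ_k\rangle$ for each such triple.

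The crucial step, and the only point where $\mathrm{char}(\FF) = 2$ is used decisively, is to show that each factor $u_{i,j} = e_{2i-1}\wedge e_{2i} + e_{2j-1}\wedge e_{2j}$ lies in $\langle \QQ_{[\{i,j\}],2}\rangle$. I would verify this by introducing $v_1 := e_{2i-1}+e_{2j-1}$ and $v_2 := e_{2i}+e_{2j}$. A direct computation from the standard form of $q$ yields $q(v_1) = q(v_2) = 0$ and $f_q(v_1,v_2) = 1+1 = 0$ (here characteristic $2$ is essential), so $\langle v_1, v_2\rangle$ is totally $q$-singular, whence $v_1\wedge v_2 \in \QQ_{[\{i,j\}],2}$. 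Expanding
\[ v_1\wedge v_2 \;=\; u_{i,j} + e_{2i-1}\wedge e_{2j} + e_{2j-1}\wedge e_{2i} \]
and noting that each of the last two summands is the Pl\"ucker image of a totally $q$-singular plane (the spanning vectors are $q$-singular and lie in distinct hyperbolic pairs, so they are $f_q$-orthogonal), I can solve for $u_{i,j}$ and conclude $u_{i,j}\in\langle\QQ_{[\{i,j\}],2}\rangle$.

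Finally, I would assemble the factors with Lemma~\ref{l11}. Under Setting~\ref{condition} the index sets $(2A-1)$, $(2B)$, $[C_1],\dots,[C_h]$ are pairwise disjoint, and the corresponding coordinate subspaces are pairwise $f_q$-orthogonal (hence $q$-orthogonal) because they are built from distinct hyperbolic pairs $\{e_{2i-1},e_{2i}\}$. The wedge $e_A^+$ represents a totally $q$-singular subspace (its spanning vectors are singular and pairwise $f_q$-orthogonal), so $e_A^+ \in \QQ_{(2A-1),|A|}$; likewise $e_B^- \in \QQ_{(2B),|B|}$; and each $u_{i_r,j_r}$ lies in $\langle\QQ_{[C_r],2}\rangle$ by the previous step. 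Iterated application of Lemma~\ref{l11} then gives $e_{A,B,C}\in\langle\QQ_{I,k}\rangle\subseteq\langle\QQ_k\rangle$, where $I$ is the union of the above index sets. The main (and really only) obstacle is the combinatorial identity for $u_{i,j}$; once this is isolated, the rest is a mechanical combination of Lemmas~\ref{l11} and~\ref{dim symplectic grassmannians}.
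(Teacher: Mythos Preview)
Your proof is correct and follows the same global strategy as the paper: show that every vector in the generating set $E_k(f_q)$ lies in $\langle\QQ_k\rangle$ by factoring $e_{A,B,C}$ into pieces lying in pairwise orthogonal coordinate subspaces and applying Lemma~\ref{l11}.

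The one genuine difference is the treatment of the base step $u_{i,j}\in\langle\QQ_{[\{i,j\}],2}\rangle$. The paper proves the full equality $\langle\QQ_2\rangle=\langle\bS_2(f_q)\rangle$ for $n=2$ indirectly: it checks by brute force over $\FF_2$ that the six totally singular lines of $Q^+(3,2)$ span the $5$-dimensional space $\langle\bS_2\rangle$, and then argues that linear independence is preserved under field extension. Your argument is more direct and constructive: the single pair $v_1=e_{2i-1}+e_{2j-1}$, $v_2=e_{2i}+e_{2j}$ spans a totally $q$-singular plane (using $f_q(v_1,v_2)=1+1=0$ in characteristic~$2$), and expanding $v_1\wedge v_2$ exhibits $u_{i,j}$ explicitly as a combination of three vectors of $\QQ_{[\{i,j\}],2}$. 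This avoids the detour through $\FF_2$ and makes the role of characteristic~$2$ completely transparent; the paper's route, on the other hand, yields the full equality $\langle\QQ_{[\{i,j\}],2}\rangle=\langle\bS_{[\{i,j\}],2}\rangle$ in one stroke rather than just the containment of $u_{i,j}$, though for the lemma at hand your weaker conclusion is all that is needed.
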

\begin{proof} As noticed above, $\langle\QQ_k\rangle\leq\langle\bS_k(f_q)\rangle$. We shall show that $\langle\QQ_k\rangle=\langle\bS_k(f_q)\rangle$.

Let $n=k=2$ and suppose first that $\FF=\FF_2$. A direct computation shows that the $6$ vectors
representing the lines of $\cQ_1=Q^+(3,2)$ span $\langle\bS_{2}(f_q)\rangle$, which has dimension $5$, and we are done.
As linear independence is  preserved taking field extensions, the $5$ vectors forming a basis of $\langle\QQ_2\rangle$ are linearly independent over any algebraic extension of $\FF$. So, $\langle\QQ_2\rangle=\langle\bS_2(f_q)\rangle$ for $n=2$.

We now show that the equality $\langle\QQ_k\rangle=\langle\bS_k(f_q)\rangle$ holds for any $n$ and any $2\leq k\leq n$. The generating set $E_k(f_q)$ for $\langle\bS_k(f_q)\rangle$ is formed by the vectors $e_{A,B,C,\emptyset}$. Observe first that for any $1\leq k\leq n$, all vectors of the form
$e_{A,B,\emptyset,\emptyset}$ with $|A\cup B|=k$ represent totally singular spaces for  the quadratic form $q$; so
  $e_{A,B,\emptyset,\emptyset}\in\langle\QQ_k\rangle$. Take now $e_{A,B,C,\emptyset}\in E_{k}(f_q)$ with $t:=|A\cup B|$, $0\leq t\leq k$. We have
  \[e_{A,B,C,\emptyset}=e_{A,B,\emptyset,\emptyset}\wedge e_{\emptyset,\emptyset,C,\emptyset}\]
with $e_{A,B,\emptyset,\emptyset}$ and $e_{\emptyset,\emptyset,C,\emptyset}$ as in Setting~\ref{condition} but with $k$ replaced by $t$ and $k-t$ respectively (note that $k-t$ is even). Then $e_{A,B,\emptyset,\emptyset}\in\langle\QQ_{(2A-1)\cup(2B),t}\rangle\leq
\langle\QQ_{[A\cup B],t}\rangle$ and $e_{\emptyset,\emptyset,C,\emptyset}\in \langle\bS_{[\overline{C}],k-t}\rangle$.

The vectors of the form $e_{\emptyset,\emptyset,{C},\emptyset}$ are in $\langle \QQ_{[\overline{C}],k-t}\rangle$ since, by Setting~\ref{condition}, $e_{\emptyset,\emptyset,C,\emptyset}=u_C=u_{i_1,j_1}\wedge u_{i_2,j_2}\wedge\dots\wedge u_{i_{k/2},j_{(k-t)/2}}$
 and each of the $u_{i_x,j_x}$ is in $\langle\bS_{\{2i_x-1,2i_x, 2j_x-1,2j_x\},2}\rangle$. Indeed, for each $x$ we have
 $\cQ_{\{2i_x-1,2i_x,2j_x-1,2j_x\},1}\cong Q^+(3,\FF)$, so, by what has been shown for $n=k=2$, we have
 $\langle\QQ_{\{2i_x-1,2i_x,2j_x-1,2j_x\},2}\rangle= \langle\bS_{\{2i_x-1,2i_x,2j_x-1,2j_x\},2}\rangle$ and
  \[ u_C\in \displaystyle\bigwedge_{x=1,\dots, (k-t)/2}\langle\QQ_{\{2i_x-1,2i_x,2j_x-1,2j_x\},2}\rangle=\langle \QQ_{[\overline{C}],k-t}\rangle. \]
  Hence $e_{A,B,C,\emptyset}\in \langle\QQ_{[A\cup B],t}\rangle\wedge\langle\QQ_{[\overline{C}],k-t}\rangle$. However, by
  Lemma~\ref{l11},
\[\langle\QQ_{[A\cup B],t}\rangle\wedge\langle\QQ_{[\overline{C}],k-t}\rangle\subseteq\langle\QQ_{[A\cup B]\cup\overline{C},k-t}\rangle\subseteq\langle\QQ_k\rangle.\]
Therefore  $e_{A,B,C,\emptyset}\in \langle\QQ_k\rangle$. It follows that $E_k(f_q)\subseteq \langle\QQ_k\rangle$. Consequently
$\langle\bS_k(f_q)\rangle\leq\langle\QQ_k\rangle$; this gives the thesis.
\end{proof}

\begin{lemma}\label{q-even-d'=0}
Let $d_0 =2m$, namely $d'_0=0$. Then $\langle\QQ_k\rangle=\langle \bS_k(f_q)\rangle$.
\end{lemma}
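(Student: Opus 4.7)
The plan is to establish the non-trivial inclusion $\langle\QQ_k\rangle\supseteq\langle\bS_k(f_q)\rangle$, since the reverse inclusion is immediate from $\cQ_k\subseteq\cS_k(f_q)$. By Lemma~\ref{dim symplectic grassmannians}, the span $\langle\bS_k(f_q)\rangle$ is generated by the family $E_k(f_q)$ from~\eqref{base}; since $d'_0=0$ forces $f_q$ to be non-degenerate, every generator has $D=\emptyset$, so it suffices to prove that each vector $e_{A,B,C,\emptyset}=e^+_A\wedge e^-_B\wedge u_C$ with $A,B\subseteq\{1,\dots,n+m\}$ and $C$ a family of pairs in $\{1,\dots,n+m\}$ lies in $\langle\QQ_k\rangle$. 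I would argue by induction on $m$, with base case $m=0$ covered by Lemma~\ref{q-even-form}, and exploit the $f_q$-orthogonal decomposition $V=V'\oplus W$, where $V'=\langle e_1,\dots,e_{2n+2m-2}\rangle$ (on which $q$ has parameters $[n,m-1,0]$) and $W=\langle e_{2n+2m-1},e_{2n+2m}\rangle$, so that Lemma~\ref{l11} is available to glue information across the two factors.

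The analysis proceeds by case, according to where the new index $n+m$ (if any) appears in the mutually disjoint sets $A$, $B$, $\overline C$ of Setting~\ref{condition}. If $n+m\notin A\cup B\cup\overline C$, the generator is supported on $V'$ and belongs to $\langle\QQ'_k\rangle\subseteq\langle\QQ_k\rangle$ by induction. If $n+m\in A$ (symmetrically $B$), factor $e_{A,B,C,\emptyset}=e_{A',B,C,\emptyset}\wedge e_{2n+2m-1}$; since $k\leq n$ implies $|A'\cup B\cup\overline C|\leq k-1<n$, one picks an index $i\in\{1,\dots,n\}$ outside that support, and the three $q$-singular vectors $e_{2i-1}$, $\lambda_m e_{2i}$, and $e_{2i-1}+\lambda_m e_{2i}+e_{2n+2m-1}$ (the last being $q$-singular because $q(e_{2n+2m-1})=\lambda_m$ cancels the hyperbolic contribution $\lambda_m$ in characteristic~$2$) sum to $e_{2n+2m-1}$, placing $e_{2n+2m-1}\in\langle\QQ_{\{2i-1,2i,2n+2m-1\},1}\rangle$. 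Because the residual part of $V'$ is $f_q$-orthogonal to $V_{\{2i-1,2i,2n+2m-1\}}$, Lemma~\ref{l11} glues the inductively-handled first factor with $e_{2n+2m-1}$ to produce $e_{A,B,C,\emptyset}\in\langle\QQ_k\rangle$.

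The main obstacle is the case $n+m\in\overline C$: some pair $\{i_r,n+m\}\in C$ forces one to show that $u_{i_r,n+m}=e_{2i_r-1}\wedge e_{2i_r}+e_{2n+2m-1}\wedge e_{2n+2m}$ lies in $\langle\QQ_2\rangle$, even though neither summand is itself a point of $\cQ_2$. My plan is to realize $u_{i_r,n+m}$ inside $\langle\QQ_{J,2}\rangle$ for a carefully chosen six-dimensional ambient $V_J$ with $J=\{2i_r-1,2i_r,2j-1,2j,2n+2m-1,2n+2m\}$, where $j\in\{1,\dots,n\}$ is free, by constructing an explicit pair of $q$-singular vectors $v_1,v_2\in V_J$ tuned so that $q(v_1)=q(v_2)=0$ and $f_q(v_1,v_2)=0$ despite $\lambda_m,\mu_m\neq 0$ (the non-vanishing is forced by the anisotropy of $\lambda_m t^2+t+\mu_m$, and is precisely what rules out a four-dimensional ambient and requires the second hyperbolic pair); the expansion of $v_1\wedge v_2$ produces $u_{i_r,n+m}$ plus a sum of mixed wedges $e_a\wedge e_b$ with $b\in\{2n+2m-1,2n+2m\}$, each of which is already in $\langle\QQ_2\rangle$ by the same argument used in case (ii). For a pair $\{n+s,n+m\}\subseteq\overline C$ with both endpoints in the anisotropic range, the characteristic-$2$ identity $u_{n+s,n+m}=u_{i,n+s}+u_{i,n+m}$ (for any $i\leq n$) reduces matters to the previous subcase. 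With $u_{i_r,n+m}\in\langle\QQ_{J,2}\rangle$ in hand and the complementary factor $e_{A,B,C'',\emptyset}$ placed by induction in $\langle\QQ_{I,k-2}\rangle$ for $I$ orthogonal to $V_J$, a final application of Lemma~\ref{l11} closes the case. The hardest step is the construction of the pair $(v_1,v_2)$, which must balance singularity against $f_q$-orthogonality in characteristic~$2$ for arbitrary admissible anisotropic parameters; once this identity is secured the bookkeeping for disjoint supports always succeeds (since $k\leq n$ supplies enough free hyperbolic indices) and the induction on $m$ closes.
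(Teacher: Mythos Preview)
Your plan is sound and would yield a correct proof, but the organization is genuinely different from the paper's. The paper inducts on $k$ rather than on $m$: assuming the result for $k-1$ and $k-2$, it shows that every generator $e_{A,B,C,\emptyset}$ of rank $k$ lies in $\langle\QQ_k\rangle$ by separately extending $A$ or $B$ by one index (using $e_{2i-1}\in\langle\QQ_{\{2j-1,2j,2i-1\},1}\rangle$ exactly as you do) and then extending $C$ by one pair $\{i,h\}$. For the latter the paper splits into two cases according to whether \emph{two} free hyperbolic indices $x,y\leq n$ disjoint from $\{i,h\}$ are available (eight-element $J$) or only one (six-element $J$, forcing $h\leq n$). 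Your induction on $m$ is leaner: only generators involving the new index $n+m$ need attention, and your characteristic-$2$ identity $u_{n+s,n+m}=u_{i,n+s}+u_{i,n+m}$ collapses the anisotropic--anisotropic pair case to the hyperbolic--anisotropic one, so you never need the eight-element ambient at all. Both routes hinge on the same six-dimensional construction you single out as the hardest step.

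Two small points of caution. First, when you apply Lemma~\ref{l11} in case~(ii), the ``inductively-handled first factor'' $e_{A',B,C,\emptyset}$ must be placed not merely in $\langle\QQ'_{k-1}\rangle$ but in $\langle\QQ_{I',k-1}\rangle$ for $I'=\{1,\dots,2n+2m-2\}\setminus\{2i-1,2i\}$, where the form has parameters $[n-1,m-1,0]$; this works since $k-1\leq n-1$ and your induction on $m$ is implicitly over all admissible $(n',k')$. Second, the paper's explicit pair $(u_1,u_2)$ in the six-element case actually produces an extra term $\alpha\,u_{j,i_r}$ in the expansion (not only mixed wedges with one anisotropic index), which is then dispatched via Lemma~\ref{q-even-form} since $j,i_r\leq n$. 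A construction matching your description more literally---with vanishing $e_{2j-1}\wedge e_{2j}$ coefficient---does exist (e.g.\ $v_1=e_{2i_r-1}+e_{2j-1}+\lambda_m e_{2j}+e_{2n+2m-1}$, $v_2=(\lambda_m+\mu_m)e_{2i_r-1}+e_{2i_r}+e_{2j-1}+\lambda_m e_{2j}+e_{2n+2m}$), so your claim is not wrong, but you should expect either that refinement or an appeal to Lemma~\ref{q-even-form} when you carry out the computation.
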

\begin{proof}
If $m=0$ then the thesis holds by  Lemma~\ref{q-even-form}. Suppose $m\geq 1$. We will show that any vector of $E_k(f_q)$ is contained in  $\langle\QQ_k\rangle$. We argue by induction on $k$. If $k=1$, it is well known that $\langle\QQ_1\rangle=V=\langle \bS_1(f_q)\rangle$ and we are done. Suppose now that the lemma holds for all values up to $k-1$. Let $(A,B,C,\emptyset)$ be as in Setting~\ref{condition}
with $|A\cup B\cup\overline{C}|=k-1$. Then, by the inductive hypothesis, $e_{A,B,C} := e_{A,B,C,\emptyset}$ belongs to $\langle\QQ_{k-1}\rangle$. Take $i\not\in A\cup B\cup\overline{C}$.

If $i\leq n$, since $q(e_{2i-1})=q(e_{2i})=0$, we have that $e_{A\cup\{i\},B,C}$ and $e_{A,B\cup\{i\},C}$ are in $\langle\QQ_k\rangle$.
Indeed $e_{A\cup\{i\},B,C}=e_{A,B,C}\wedge e_{2i-1}$ and $e_{A,B,C}\in\langle\QQ_{\{1,2,\dots,2i-2,2i+1,\dots,2n\},k-1}\rangle$,
since $i\not\in A\cup B$. So, $e_{A\cup\{i\},B,C} = e_{A,B,C}\wedge e_{2i-1}\in\langle\QQ_{[A\cup B\cup\overline{C}],k-1}\rangle\wedge\langle\QQ_{\{2i-1,2i\},1}\rangle$. However
 \[\langle\QQ_{[A\cup B\cup\overline{C}],k-1}\rangle\wedge\langle\QQ_{\{2i-1,2i\},1}\rangle\subseteq\langle\QQ_{[A\cup B\cup \overline{C}]\cup \{2i-1,2i\},k-1+1}\rangle\subseteq \langle\QQ_k\rangle\]
by Lemma~\ref{l11}. Hence $e_{A\cup\{i\},B,C}\in\langle\QQ_k\rangle$. The proof that $e_{A,B\cup\{i\},C}\in\langle\QQ_{k}\rangle$ is entirely analogous.

Suppose $i>n$. Since $|A\cup B\cup\overline{C}|=k-1\leq n-1$, there exists $j$ with $1\leq j\leq n$ and $j\not\in A\cup B\cup\overline{C}$. Let
$ v_1=e_{2j-1}+\lambda_{i-n}e_{2j}+e_{2i-1},$ where $\lambda_{i-n}=q(e_{2i-1})$. By construction, $\langle v_1\rangle\in\QQ_{\{2j-1,2j,2i-1,2i\},1}$; since $q(e_{2j-1})=q(e_{2j})=0$, also $\langle e_{2j-1}\rangle, \langle e_{2j}\rangle\in\QQ_{\{2j-1,2j,2i-1,2i\},1}$, thus forcing $e_{2i-1}\in\langle\QQ_{\{2j-1,2j,2i-1,2i\},1}\rangle$. Thus
\[ e_{A\cup\{i\},B,C}=e_{A,B,C}\wedge e_{2i-1}\in \langle\QQ_{[A\cup B\cup\overline{C}],k-1}\rangle\wedge
  \langle\QQ_{\{2j-1,2j,2i-1,2i\},1}\rangle. \]
By Lemma~\ref{l11}, $\langle\QQ_{[A\cup B\cup\overline{C}],k-1}\rangle\wedge\langle\QQ_{\{2j-1,2j,2i-1,2i\},1}\rangle\subseteq\langle\QQ_k\rangle$. Therefore $e_{A\cup\{i\},B,C}\in \langle\QQ_k\rangle$.
Likewise, let $ v_2=e_{2j-1}+\mu_{i-n}e_{2j}+e_{2i},$ where $\mu_{i-n}=q(e_{2i})$. The same argument as above shows that
\[ e_{A,B\cup\{i\},C}=e_{A,B,C}\wedge e_{2i}\in
  \langle\QQ_{[A\cup B\cup\overline{C}],k-1}\rangle\wedge
  \langle\QQ_{\{2j-1,2j,2i-1,2i\},1}\rangle\subseteq\langle\QQ_k\rangle. \]
Take now $(A,B,C,\emptyset)$ as in Setting~\ref{condition} with $|A\cup B\cup\overline{C}|=k-2$. By the inductive hypothesis, $e_{A,B,C}\in\langle\QQ_{k-2}\rangle$. We claim that $e_{A,B,C\cup\{\{i,h\}\}}\in\langle\QQ_k\rangle$ for any pair $\{i,h\}\subseteq \{1,2,\dots,n+m\}$ such that $\{i,h\}\cap(A\cup B\cup\overline{C})=\emptyset$.

Since $k-2\leq n-2$, there exist at least two distinct indexes $x,y$ with $1\leq x<y\leq n$ and
$x,y\not\in A\cup B\cup\overline{C}$. Also, there exists at least one index less or equal to $n$ which does not belong to $A\cup B\cup\overline{C}\cup \{i,h\}$ because $|(A\cup B\cup\overline{C}\cup \{i,h\})\cap \{1,2,\dots, n\}|\leq k-1\leq n-1$. Assume $x$ is that index.
We now distinguish two cases, according as $y$ coincides or not with $h$. Note that in the case $k<n$ it is always possible to find
  $x$ and $y$ distinct such that $\{i,h\}\cap\{x,y\}=\emptyset$ while if $k=n$ then we may have to take $y=h$.

\bigskip

\noindent
\begin{enumerate}[1.]
%  1.
\item Suppose $y\neq h$. Let $u_1=e_{2x-1}+\alpha e_{2x}+e_{2h-1}+e_{2i-1}$ and $u_2=e_{2y-1}+\beta e_{2y}+e_{2h}+e_{2i}$
with $\alpha=q(e_{2h-1}+e_{2i-1})$ and $\beta=q(e_{2h}+e_{2i})$ and $J:=\{2x-1,2x,2y-1,2y,2i-1,2i,2h-1,2h\}$.
 By construction, $f_q(u_1,u_2)=0$; so  $\langle u_1\wedge u_2\rangle\in\QQ_{J,2}$. On the other hand,
  \[ u_1\wedge u_2=(e_{2h-1}\wedge e_{2h}+e_{2i-1}\wedge e_{2i})+w \]
where
  \begin{multline*}
 w=(e_{2x-1}+\alpha e_{2x})\wedge(e_{2y-1}+\beta e_{2y})+ (e_{2x-1}+\alpha e_{2x})\wedge e_{2h}+ (e_{2y-1}+\beta e_{2y})\wedge e_{2h-1} + \\
    +(e_{2y-1}+\beta e_{2y}+e_{2h})\wedge e_{2i-1}+(e_{2x-1}+\alpha e_{2x}+e_{2h-1})\wedge e_{2i}
  \end{multline*}
is a linear combination of vectors of the form $e_{A,B,\emptyset}$. So $w\in\langle\QQ_{J,2}\rangle$ and, consequently,
$u_{h,i}=e_{2h-1}\wedge e_{2h}+e_{2i-1}\wedge e_{2i}\in\langle\QQ_{J,2}\rangle$. Since $J\cap(A\cup B\cup\overline{C})=\emptyset$ we have
  \[ e_{A,B,C\cup\{\{i,h\}\}}=e_{A,B,C}\wedge u_{h,i}\in
    \langle\QQ_{\{1,2,\dots,N\}\setminus J,k-2}\rangle\wedge
    \langle\QQ_{J,2}\rangle\subseteq\langle\QQ_{k}\rangle. \]
\item Suppose $y=h$. Let $u_1=\alpha e_{2x-1}+\beta e_{2x}+e_{2h-1}+e_{2i-1}$ and $u_2=e_{2x}+\alpha'e_{2h}+\beta'e_{2h-1}+e_{2i}$ with $\alpha,\alpha',\beta,\beta'$ such that $q(u_1)=q(u_2)=0$. This yields $\alpha\beta=q(e_{2i-1})$ and $\alpha'\beta'=q(e_{2i})$.
  Note $q(e_{2i-1})\not=0\not=q(e_{2i})$. We also want $f_q(u_1,u_2)=\alpha+\alpha'+1=0$. Take
  $\alpha'=\alpha+1$. Let now $J:=\{2x-1,2x,2h-1,2h,2i-1,2i\}$. Then, $\langle u_1\wedge u_2\rangle\in\QQ_{J,2}$ and
  \[ u_1\wedge u_2=\alpha(e_{2x-1}\wedge e_{2x})+\alpha'(e_{2h-1}\wedge e_{2h})+ (e_{2i-1}\wedge e_{2i})+ w \]
  where
  \begin{multline*}
    w=\alpha e_{2x-1}\wedge(\alpha'e_{2h}+\beta'e_{2h-1}+e_{2i})+\beta e_{2x}\wedge(\alpha' e_{2h}+\beta'e_{2h-1}+e_{2i})+ \\
    e_{2h-1}\wedge(e_{2x}+e_{2i})+e_{2i-1}\wedge(e_{2x}+e_{2h}+\beta'e_{2h-1})\in\langle\QQ_{J,2}\rangle,
  \end{multline*}
  since $w$ is a linear combination of vectors of the form $e_{A,B,\emptyset}$. On the other hand,
  \begin{multline*} u_1\wedge u_2-w=\alpha(e_{2x-1}\wedge e_{2x})+(1+\alpha)(e_{2h-1}\wedge e_{2h})+(e_{2i-1}\wedge e_{2i})= \\
    \alpha(\underbrace{e_{2x-1}\wedge e_{2x}+e_{2h-1}\wedge e_{2h}}_{v_1})+ (\underbrace{e_{2h-1}\wedge e_{2h}+e_{2i-1}\wedge e_{2i}}_{v_2}).
  \end{multline*}
Since $1\leq x,y\leq n$ and $h=y$, we have that $v_1\in \langle \bS_{\{2x-1,2x,2y-1,2y\},2}(f_q)\rangle$; however the quadratic form $q' = q_{\{2x-1,2x,2y-1,2y\}}$ is non-singular with anisotropic defect $0$. So, by Lemma~\ref{q-even-form},
  \[\langle \bS_{\{2x-1,2x,2y-1,2y\},2}(f_{q'})\rangle=\langle \QQ_{\{2x-1,2x,2y-1,2y\},2}\rangle\subseteq\langle\QQ_{J,2}\rangle.\]
  It follows that $v_2=u_1\wedge u_2-v_1-w\in\langle\QQ_{J,2}\rangle$.  By Lemma~\ref{l11} we now have
  \[ e_{A,B,C\cup\{\{i,h\}\}}=e_{A,B,C}\wedge v_2\in\langle{\QQ_{[A\cup B\cup\overline{C}],k-2}}\rangle\wedge
    \langle{\QQ_{J,2}}\rangle\subseteq\langle\QQ_{k}\rangle. \]
\end{enumerate}
  This completes the proof.
\end{proof}

\begin{theorem}\label{main-quad-even}
We have $\langle\QQ_k\rangle= \langle \bS_k(f_q)\rangle$ for any  choice of the parameters $[n,m,d'_0]$ of $q$ and any $k \leq n$.
\end{theorem}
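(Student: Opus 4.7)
The inclusion $\langle\QQ_k\rangle \subseteq \langle\bS_k(f_q)\rangle$ is immediate since totally $q$-singular subspaces are totally $f_q$-singular, so the task is to prove the reverse inclusion. The plan is to induct on $d'_0$, with the base case $d'_0 = 0$ given by Lemma~\ref{q-even-d'=0}. By Lemma~\ref{dim symplectic grassmannians} it is enough to show that each generator $e_{A,B,C,D} \in E_k(f_q)$ lies in $\langle\QQ_k\rangle$.

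Assume $d'_0 \geq 1$, so $e_N$ is an anisotropic basis vector with $\kappa_N := q(e_N) \neq 0$. If $N \notin D$, put $I := \{1, \ldots, N-1\}$; the restricted form $q_I$ has parameters $[n, m, d'_0 - 1]$, and $e_{A,B,C,D}$ is a generator of $E_k(f_{q_I})$. Hence the inductive hypothesis yields $e_{A,B,C,D} \in \langle\QQ_{I,k}\rangle \subseteq \langle\QQ_k\rangle$.

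If instead $N \in D$, write $e_{A,B,C,D} = e_{A,B,C,D \setminus \{N\}} \wedge e_N$. Since $|A \cup B \cup \overline{C}| = k - |D| \leq k - 1 \leq n - 1$, there exists $j \in \{1, \ldots, n\}$ with $j \notin A \cup B \cup \overline{C}$. In characteristic $2$, using that $e_N$ lies in the radical of $f_q$, the vector $v := e_{2j-1} + \kappa_N e_{2j} + e_N$ satisfies
\[q(v) = q(e_{2j-1}) + \kappa_N^2\, q(e_{2j}) + q(e_N) + \kappa_N\, f_q(e_{2j-1}, e_{2j}) = 0 + 0 + \kappa_N + \kappa_N = 0,\]
so $\langle v\rangle \in \QQ_{\{2j-1, 2j, N\}, 1}$. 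Combined with $\langle e_{2j-1}\rangle, \langle e_{2j}\rangle \in \QQ_{\{2j-1, 2j, N\}, 1}$, this forces $e_N \in \langle\QQ_{\{2j-1, 2j, N\}, 1}\rangle$. Setting $I_j := \{1, \ldots, N-1\} \setminus \{2j-1, 2j\}$, the restriction $q_{I_j}$ has parameters $[n-1, m, d'_0 - 1]$ with $k - 1 \leq n - 1$, and $e_{A,B,C,D\setminus\{N\}}$ is a generator of $E_{k-1}(f_{q_{I_j}})$. By the inductive hypothesis $e_{A,B,C,D\setminus\{N\}} \in \langle\QQ_{I_j, k-1}\rangle$. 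Since $\{2j-1, 2j, N\}$ and $I_j$ are disjoint and $V_{\{2j-1, 2j, N\}} \perp V_{I_j}$ under $f_q$, Lemma~\ref{l11} gives
\[e_{A,B,C,D} \in \langle\QQ_{I_j, k-1}\rangle \wedge \langle\QQ_{\{2j-1, 2j, N\}, 1}\rangle \subseteq \langle\QQ_k\rangle,\]
completing the induction.

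The main subtlety is that the index $j$ must be chosen in $\{1, \ldots, n\}$ and not merely in $\{1, \ldots, n+m\}$: the construction requires $e_{2j-1}$ and $e_{2j}$ to be $q$-singular, which fails for $j \in \{n+1, \ldots, n+m\}$ because of the terms $\lambda_{j-n} x_{2j-1}^2$ and $\mu_{j-n} x_{2j}^2$ appearing in $q_1$. The hypothesis $k \leq n$ makes such a $j$ available by a counting argument, and the rest is a natural characteristic-$2$ adaptation of the inductive schemes used in Theorems~\ref{main-herm} and~\ref{main-quad-odd}.
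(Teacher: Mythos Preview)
Your proof is correct and follows essentially the same approach as the paper: both reduce to Lemma~\ref{q-even-d'=0} and then peel off a radical-type index $j\in D$ by showing $e_j\in\langle\QQ_{\{2i-1,2i,j\},1}\rangle$ for some $i\leq n$, then invoke Lemma~\ref{l11}. The paper organizes the induction on $h:=|D|$ (noting explicitly that induction on $k$ or $d'_0$ would work equally well), whereas you induct on $d'_0$ with a case split on whether $N\in D$; this is a purely cosmetic difference in bookkeeping.
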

\begin{proof}
When $d'_0=0$ the statement holds true by Lemma~\ref{q-even-d'=0}. Also, for $k=1$ it is well known that $\langle\QQ_1\rangle=V=\langle \bS_1(f_q)\rangle$. In this case there is nothing to prove.

Suppose $k>1,$ take $e_{A,B,C,D}\in E_k(f_q)$ and let $J=A\cup B\cup\overline{C}\cup D$. Clearly $|J|=k$. Let also
$h=k-|J\cap\{1,\dots,n,n+1,\dots,n+m\}|$. We argue by induction on $h$. If $h=0$, then
$e_{A,B,C,D}\in \langle \bS_{\{1,\dots,2n+2m\},k}(f_q)\rangle=\langle\QQ_{\{1,2,\dots,2n+2m\},k}\rangle$
  by Lemma~\ref{q-even-d'=0}, since the polar space $\cQ_{\{1,2,\dots,2n+2m\}}$ has $d'_0=0$ and we are done.
  Suppose $h>1$; in particular $D\neq\emptyset$ and take $j\in D$. As $D \subseteq \{2n+2m+1,\dots, N\}$ (see Setting~\ref{condition}), it must be $j>2n+2m$.  Moreover, there necessarily exists  an index $1\leq i\leq n$ such that $\{2i-1,2i\}\cap J=\emptyset$. Let $u=e_{2i-1}+\kappa_je_{2i}+e_{j}$ where $q(e_j)=\kappa_j$. Clearly $q(u)=0$, so $\langle u\rangle\in\QQ_{\{2i-1,2i,j\},1}$. Since
$\langle e_{2i-1}\rangle, \langle e_{2i}\rangle\in\QQ_{\{2i-1,2i,j\}}$ we have $e_j\in\langle\QQ_{\{2i-1,2i,j\},1}\rangle$.
Now let $D'=D\setminus\{j\}$ and $J'=A\cup B\cup\overline{C}\cup D'$.Then,
$h':=(k-1)-|J'\cap\{1,2,\dots,n+m\}|=(k-1)-|J\cap\{1,2,\dots,n+m\}|=h-1$. Moreover, if $I_{ij} := \{1,2,\dots,N\}\setminus\{2i-1,2i,j\}$, the form $q_{I_{ij}}$ is non-degenerate with parameters $[n-1, m, d'_0-1]$. So $e_{A,B,C,D'}\in\langle\QQ_{I_{ij},k-1}\rangle$ by the inductive hypothesis on $h$ (but induction on $k$ or on $d'_0$ would work as well). By Lemma~\ref{l11},
\[ e_{A,B,C,D}=e_{A,B,C,D'}\wedge e_j\in\langle\QQ_{I_{ij},k-1}\rangle\wedge \langle\QQ_{\{2i-1,2i,j\},1}\rangle\subseteq\langle\QQ_k\rangle. \]
This completes the proof.
\end{proof}

\begin{corollary}\label{part 2 main theorem}
$\dim(\langle\QQ_k\rangle)={N\choose k}-{N\choose{k-2}}$.
\end{corollary}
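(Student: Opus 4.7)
The plan is to deduce this corollary directly from Theorem~\ref{main-quad-even} together with the symplectic case of the Main Theorem. By Theorem~\ref{main-quad-even} we have the equality $\langle\QQ_k\rangle = \langle\bS_k(f_q)\rangle$ of subspaces of $\bigwedge^kV$, so it suffices to compute $\dim(\langle\bS_k(f_q)\rangle)$.

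First I would check that Part~\ref{pt2} of Theorem~\ref{Main Theorem} actually applies to the bilinearization $f_q$. The form $f_q$ is alternating on $V$ with $\dim(\mathrm{Rad}(f_q)) = d'_0$ and reduced Witt index $n+m$ (this was recorded at the start of Subsection~\ref{even char}). The hypothesis there requires $k$ to be at most the reduced Witt index of the form, which here means $k \leq n+m$; since by assumption $k \leq n \leq n+m$, the hypothesis is verified. Part~\ref{pt2} of Theorem~\ref{Main Theorem} then yields $\dim(\langle\bS_k(f_q)\rangle) = \binom{N}{k} - \binom{N}{k-2}$, with the convention $\binom{N}{-1}:=0$ if $k=1$.

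Combining the two inputs, $\dim(\langle\QQ_k\rangle) = \dim(\langle\bS_k(f_q)\rangle) = \binom{N}{k} - \binom{N}{k-2}$, which is the claimed formula. There is no real obstacle here; the entire content of the corollary is packed into Theorem~\ref{main-quad-even}, and the symplectic dimension formula was established (via Section~\ref{Bsymp} in the non-degenerate case together with Theorem~\ref{prel novo 3} to pass to the degenerate case, which here corresponds to the defect $d'_0$ of $f_q$). The only point worth flagging is that the ``defect'' of the alternating form $f_q$ appearing in the symplectic formula is $d'_0$, not $d_0$ or $d$, but this discrepancy is invisible in the final answer because the formula $\binom{N}{k}-\binom{N}{k-2}$ depends only on $N$ and $k$.
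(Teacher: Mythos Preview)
Your argument is correct and is exactly the intended one: the paper states the corollary without proof because it is immediate from Theorem~\ref{main-quad-even} together with the symplectic dimension formula established in Section~\ref{Bsymp} (Part~\ref{pt2} of Theorem~\ref{Main Theorem}, valid for possibly degenerate alternating forms via Theorem~\ref{prel novo 3}). Your check that $k\leq n\leq n+m$, so that the symplectic result applies to $f_q$, is the only point that needs noting, and you handled it correctly.
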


\begin{remark}
  Theorem \ref{main-quad-even} includes Theorem~1.2 of~\cite{IP13} as the special case where $d = 1$, but the proof given in~\cite{IP13} is far more elaborate than our proof of Theorem~\ref{main-quad-even}. Note also that we have obtained Theorem~\ref{main-quad-even} from Lemmas~\ref{q-even-form}
  and~\ref{q-even-d'=0} while in~\cite{IP13} the analogue of our
  Lemma~\ref{q-even-form} is obtained as a consequence of Theorem~1.2
  of that paper.
\end{remark}

\subsection{End of the proof of Part~\ref{pt3} of Theorem~\ref{Main Theorem}}

Let now $q$ be degenerate with reduced Witt index $n$ and let $k \leq n$. When $\ch(\FF) \neq 2$ the equality $\dim(\langle\QQ_k\rangle) = {N\choose k}$ follows from Corollary~\ref{part 2. main theorem odd} and Part~\ref{pt1a} of Theorem~\ref{prel novo 3}. In this case $\langle\QQ_k\rangle = V_k$.

Let $\ch(\FF) = 2$. Then $\dim(\langle\QQ_k\rangle) = {N\choose k}-{N\choose{k-2}}$ by Corollary~\ref{part 2 main theorem} and Part~\ref{pt2a} of Theorem~\ref{prel novo 3}. Let $f_q$ be the bilinearization of $q$. Then $\langle\QQ_k\rangle \leq \langle\bS_k(f_q)\rangle$. Moreover $\dim(\langle\bS_k(f_q)\rangle) = {N\choose k}-{N\choose{k-2}}$ by
Part~\ref{pt2} of Theorem~\ref{Main Theorem}, as already proved in Section~\ref{Bsymp}. The equality
$\langle\QQ_k\rangle = \langle\bS_k(f_q)\rangle$ follows.

The proof of Theorem~\ref{Main Theorem} is complete.

\section{The case $n < k \leq n+d$}\label{nova Sec}

We shall now prove Corollary~\ref{Main Corollary}. Let $n < k \leq n+d$. Note firstly that the formula of Corollary~\ref{prel novo 2} also holds for $n < k \leq n+d$ provided that the summation index $i$ is subject to the restriction $k-i \leq n$, namely $i\geq k-n$. Similarly for the statement  of
Lemma~\ref{prel novo 1}. Thus,
\begin{equation}\label{prel novo 1 2}
\left.\begin{array}{rcl}
\langle\PP_k\rangle& = & \displaystyle \bigoplus_{i= k-n}^{\mathrm{min}(d,k)}\langle\overline{\PP}_{k-i}\rangle\wedge\bigwedge^iR,\\
 & & \\
\dim(\langle\PP_k\rangle)& = & \displaystyle \sum_{i= k-n}^{\mathrm{min}(d,k)}\dim(\langle\overline{\PP}_{k-i}\rangle)\cdot{d\choose i}.
\end{array}\right\}
\end{equation}
The dimensions of the spaces $\langle\overline{\PP}_{k-i}\rangle$ are known by Theorem~\ref{Main Theorem} (recall that the form $\overline{\eta}$ induced by $\eta$ on $\overline{V}$ is non-degenerate): explicitly,  $\dim(\langle\overline{\PP}_{k-i}\rangle)$ is equal to ${{N-d}\choose{k-i}}$ or ${{N-d}\choose{k-i}}-{{N-d}\choose{k-i-2}}$ according to the type of $\eta$ and the characteristic of $\FF$ (when $\eta$ is quadratic). By putting these values in the second equation of~\eqref{prel novo 1 2}
and using Lemma~\ref{lemma-sum}
we obtain the formulas of Corollary~\ref{Main Corollary}. 

The last claim of Corollary~\ref{Main Corollary}, namely $\langle\PP_k\rangle \subset V_k$, is clear. Indeed $\sum_{i=0}^{k-n-1}{{N-d}\choose{k-i}}{d\choose i} > 0$. So, $\dim(\langle\PP_k\rangle) < {N\choose k} = \dim(V_k)$ in Case~\ref{cc1} of Corollary~\ref{Main Corollary}. In Case~\ref{cc2} we have $\langle\overline{\PP}_{k-i}\rangle \subset \overline{V}_{k-i}$ for every admissible value of $i$. Consequently $\langle\PP_k\rangle \subset V_k$ by the first equation of~\eqref{prel novo 1 2}.   

\section{Generalizing the Weyl embedding }\label{Weyl generalizzato}

In this section we will show that it is possible to define a projective embedding $\widetilde{\varepsilon}_0$ of a subgeometry $\Gamma_0$ of a given geometry $\Gamma$ (in general $\Gamma_0$ is defined over a subfield) starting from a given projective embedding $\varepsilon_0$ of $\Gamma_0$ and knowing that there exist two projective embeddings $\varepsilon$ and $\widetilde{\varepsilon}$ of $\Gamma$ such that $\varepsilon$ induces $\varepsilon_0$ on $\Gamma_0$ and is a quotient of $\widetilde{\varepsilon}$. We will then apply this result to obtain a generalization of the Weyl embedding
to orthogonal Grassmannians in even characteristic defined by non-degenerate quadratic forms with anisotropic defect $d_0 > 1$ but $d'_0 \leq 1$.

In the Introduction of this paper we have used the word ``embedding'' in a somewhat loose way, avoiding a strict definition but thinking of an embedding $\varepsilon:\Gamma\rightarrow\Sigma$ of a point-line geometry $\Gamma$ in a projective space $\Sigma$ as an injective mapping from the point-set $P$ of $\Gamma$ to the point-set of $\Sigma$, neither requiring that $\varepsilon$ is `projective', namely it maps lines of $\Gamma$ onto lines of $\Sigma$ (although this requirement is satisfied in most of the cases we consider), nor that $\varepsilon(P)$ spans $\Sigma$, even when $\varepsilon$ is indeed `projective'. This free way of talking was the right one in that context, since we focused on the problem of determining the span of the set of points $\varepsilon(P)$ (namely $\varepsilon_k({\cP}_k)$) inside $\Sigma$ (namely $\PG(\bigwedge^kV)$). However, in the sequel, sticking to that
lax setting would cause some troubles. Thus, from now on, we shall adopt a sharper terminology. Following Shult~\cite{SH95}, given a point-line geometry $\Gamma$ where the lines are sets of points, we say that an injective mapping from the point-set $P$ of $\Gamma$ to the point-set of a projective geometry $\PG(V)$ is a \emph{projective embedding} of $\Gamma$ in $\mathrm{PG}(V)$ if:
\begin{enumerate}[(E1)]
\item\label{eE1} for every line $\ell$ of $\Gamma$, the set $\{\varepsilon(p) : p\in \ell\}$ is a line of $\mathrm{PG}(V)$;
\item\label{eE2} the set $\varepsilon(P)$ spans $\mathrm{PG}(V)$.
\end{enumerate}
When talking about projective embeddings in the Introduction of this paper we assumed (E\ref{eE1}) but not (E\ref{eE2}), but now we also require (E\ref{eE2}). According to (E\ref{eE2}), the dimension $\dim(\varepsilon)$ of $\varepsilon$ is just $\dim(V)$.

All embeddings to be considered henceforth  are projective in the sense we have now fixed. In the sequel we shall also deal with morphisms of embeddings. We refer to Shult~\cite{SH95} for this notion.

\subsection{Liftings of projective embeddings}\label{s:lift}

Let $\Gamma$ be a point-line geometry and consider two projective embeddings
\[ \widetilde{\varepsilon}:\Gamma\to\PG(\widetilde{V}),\qquad \varepsilon:\Gamma\to\PG(V) \]
defined over the same field $\FF$ such that $\varepsilon$ is
a quotient of $\widetilde{\varepsilon}$. Let also
$\varphi:\widetilde{\varepsilon}\to\varepsilon$ be the
morphism from $\widetilde{\varepsilon}$ to $\varepsilon$, i.e. $\varphi\colon \widetilde{V}\to V$ is a semilinear mapping such that $\varphi\circ \widetilde{\varepsilon}=\varepsilon$. Note that $\varphi$  is uniquely determined up to scalars. In particular, $\PG(\widetilde{V}/K)\cong\PG(V)$ where $K:=\ker(\varphi)$. Note also that $\varphi$ induces a bijection from $\widetilde{\varepsilon}(\Gamma)$ to $\varepsilon(\Gamma)$ (in fact an isomorphism of point-line geometries). Equivalently, $K$ contains no point $\widetilde{\varepsilon}(p)\in \widetilde{\varepsilon}(\Gamma)$ and $\langle \widetilde{\varepsilon}(p), \widetilde{\varepsilon}(q)\rangle\cap K = 0$ for any two points $p$ and $q$ of $\Gamma$.

\begin{lemma}\label{uniq}
  For any point $p$ of $\Gamma$ and any vector $v\in\varepsilon(p)$ with
  $v\neq0$, there is exactly one vector
  $\widetilde{v}\in\widetilde{\varepsilon}(p)$ such that
  $\varphi(\widetilde{v})=v$.
\end{lemma}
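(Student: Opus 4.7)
The plan is to exploit the one-dimensionality of the projective points $\widetilde{\varepsilon}(p)$ and $\varepsilon(p)$ as $\FF$-subspaces of $\widetilde{V}$ and $V$, combined with the hypothesis (stated in the paragraph just before the lemma) that $K = \ker(\varphi)$ contains no point $\widetilde{\varepsilon}(p)$ of $\widetilde{\varepsilon}(\Gamma)$. This translates into $\widetilde{\varepsilon}(p) \cap K = 0$, which will give injectivity of $\varphi$ restricted to the line $\widetilde{\varepsilon}(p)$.

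For existence, I would first fix any nonzero $\widetilde{v}_0 \in \widetilde{\varepsilon}(p)$. Since $\widetilde{v}_0 \notin K$, the image $\varphi(\widetilde{v}_0)$ is a nonzero vector lying in $\varepsilon(p) = \varphi(\widetilde{\varepsilon}(p))$, and therefore spans $\varepsilon(p)$ as an $\FF$-line. Hence there is a scalar $\lambda \in \FF$ with $v = \lambda\,\varphi(\widetilde{v}_0)$. Writing $\sigma$ for the field automorphism associated with the semilinear map $\varphi$, we obtain $v = \varphi(\sigma^{-1}(\lambda)\widetilde{v}_0)$, so $\widetilde{v} := \sigma^{-1}(\lambda)\widetilde{v}_0 \in \widetilde{\varepsilon}(p)$ is a preimage of $v$.

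For uniqueness, suppose $\widetilde{v}_1, \widetilde{v}_2 \in \widetilde{\varepsilon}(p)$ both satisfy $\varphi(\widetilde{v}_i) = v$. Then $\widetilde{v}_1 - \widetilde{v}_2$ lies in $\widetilde{\varepsilon}(p) \cap K$, which is trivial by the remark above, so $\widetilde{v}_1 = \widetilde{v}_2$.

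There is no real obstacle here; the only thing worth being careful about is not to confuse projective points with vectors and to handle the field twist $\sigma$ correctly when writing the preimage scalar, but both are routine. The content of the lemma is essentially that $\varphi$ restricts to an $\FF$-semilinear bijection between the one-dimensional subspaces $\widetilde{\varepsilon}(p)$ and $\varepsilon(p)$, which is immediate from $\widetilde{\varepsilon}(p) \cap K = 0$.
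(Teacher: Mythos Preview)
Your proof is correct and follows essentially the same approach as the paper: both arguments reduce to the observation that $\widetilde{\varepsilon}(p)\cap K=0$, obtain existence by picking any nonzero $\widetilde{v}_0\in\widetilde{\varepsilon}(p)$ and rescaling, and obtain uniqueness from the fact that the difference of two preimages lies in $\widetilde{\varepsilon}(p)\cap K$. Your treatment of the semilinear twist via $\sigma^{-1}(\lambda)$ is in fact slightly more careful than the paper's own wording.
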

\begin{proof}
We warn the reader that in the following we will often regard the point $\varepsilon(p)$ as a $1$-dimensional vector subspace.

Since $\varphi$ is surjective, $\varphi^{-1}(\varepsilon(p))\not=\emptyset$. Hence, since by hypothesis $v\in\varepsilon(p)$
(i.e. $\langle v\rangle=\varepsilon (p)$), there is $\widetilde{x}\in\widetilde{V}$ such that $\varphi^{-1}({v})=\widetilde{x}+K$ and
$\widetilde{\varepsilon}(p)=\langle \widetilde{x} \rangle$. Hence $|(\widetilde{x}+K)\cap\widetilde{\varepsilon}(p)|\geq 1$.
If $|(\widetilde{x}+K)\cap\widetilde{\varepsilon}(p)|\geq 2$ then $\widetilde{\varepsilon}(p)\cap K\neq 0$ and, since $\dim(\widetilde{\varepsilon}(p))=1$,
 we would have  $\widetilde{\varepsilon}(p)\subseteq K$ --- a contradiction. So, $|(\widetilde{x}+K)\cap\widetilde{\varepsilon}(p)|=1$.
  Take $\widetilde{v}\in\widetilde{\varepsilon}(p)$. Then $\langle\varphi(\widetilde{v})\rangle=\varepsilon(p)=\langle v\rangle$;
  so $\varphi(\widetilde{v})=tv$ for some $t\neq 0$. Up to replacing $\widetilde{v}$ with $t^{-1}\widetilde{v}$, we can suppose $\varphi(\widetilde{v})=v$, so $\widetilde{v}\in(\widetilde{x}+K)\cap\widetilde{\varepsilon}(p)$. Consequently,  $\varphi^{-1}(v)\cap\widetilde{\varepsilon}(p)=\{\widetilde{v}\}$.
\end{proof}

\begin{definition}
With $\widetilde{v}$ as in Lemma~\ref{uniq}, we call $\widetilde{v}$ the \emph{lifting} of $v$ to $\widetilde{V}$ through $\varphi$ and write $\widetilde{v}=\varphi^{-1}(v)$.
\end{definition}

Let now $\Gamma_0$ be a subgeometry of $\Gamma$ defined over a subfield $\FF_0$ of $\FF$. Suppose that the vector space $V$ admits a basis $E$
such that the restriction of $\varepsilon$ to $\Gamma_0$ is the natural field extension of a projective embedding $\varepsilon_0:\Gamma_0\to\PG(V_0)$
 where $V_0$ is the span of $E$ over $\FF_0$.

In order to avoid unnecessary complications, we assume that $\varphi$ is linear. This hypothesis suits our needs in Section~\ref{Wg}. Moreover, it is not as restrictive as it can look (see below, Remark~\ref{R65}).

Let $\varphi^{-1}(\varepsilon_0(\Gamma_0))$ be the set of all liftings to $\widetilde{V}$ of vectors of $V$  representing points of the form
$\varepsilon_0(p)$  with $p\in\Gamma_0$ and let $\widetilde{V}_0$ be the span of $\varphi^{-1}(\varepsilon_0(\Gamma_0))$
over $\FF_0$. By Lemma~\ref{uniq}, every vector $v_0\in\varepsilon_0(p)$ with $p\in\Gamma_0$ admits a unique lifting
$\widetilde{v}_0\in\widetilde{\varepsilon}(\Gamma)$. Furthermore, since $\varphi$ is $\FF$--linear (whence also $\FF_0$--linear)
it is immediate to see that the set of the liftings of the non-zero vectors of $\varepsilon_0(p)$ is the $\FF_0$-span $\langle \widetilde{v}_0\rangle_{\FF_0}$
of the lifting $\widetilde{v}_0$ of any non-zero vector $v_0\in \varepsilon_0(p)$. So, the following definition is well posed.

\begin{definition}
Let $\widetilde{\varepsilon}_0:\Gamma_0\to\PG(\widetilde{V}_0)$ be the map defined by the clause $\widetilde{\varepsilon}_0(p):=\langle\widetilde{v}_0\rangle_{\FF_0}$ for $p$ a point of $\Gamma_0$, $\varepsilon_0(p)=\langle v_0\rangle_{\FF_0}$ and $\widetilde{v}_0:=\varphi^{-1}(v_0)$. We call $\widetilde{\varepsilon}_0$ the \emph{lifting} of $\varepsilon_0$ to $\widetilde{V}_0$ through $\varphi$.
\end{definition}

\begin{theorem}\label{mainlift}
The lifting $\widetilde{\varepsilon}_0$ of $\varepsilon_0$ is a projective embedding of $\Gamma_0$ in $\PG(\widetilde{V}_0)$ and $\varphi$
induces a linear morphism $\varphi_0:\widetilde{\varepsilon}_0\to\varepsilon_0$.
\end{theorem}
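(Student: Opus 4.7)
The plan is to verify the two defining conditions (E\ref{eE1}) and (E\ref{eE2}) of a projective embedding for $\widetilde{\varepsilon}_0$ and then to exhibit $\varphi_0$ as the restriction of $\varphi$ to $\widetilde{V}_0$. First I would record that $\widetilde{\varepsilon}_0$ is well-defined as a point-to-point-set map: by Lemma~\ref{uniq} every non-zero $v_0\in\varepsilon_0(p)$ has a unique lifting $\widetilde{v}_0\in\widetilde{\varepsilon}(p)$, and since $\varphi$ is $\FF$-linear (hence $\FF_0$-linear), replacing $v_0$ by $\lambda v_0$ with $\lambda\in\FF_0\setminus\{0\}$ replaces $\widetilde{v}_0$ by $\lambda\widetilde{v}_0$, so the $\FF_0$-subspace $\langle\widetilde{v}_0\rangle_{\FF_0}$ depends only on $p$. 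Injectivity of $\widetilde{\varepsilon}_0$ then follows from injectivity of $\varepsilon_0$, because by construction $\varphi(\widetilde{\varepsilon}_0(p))=\varepsilon_0(p)$.

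The main work is checking (E\ref{eE1}). Given a line $\ell_0$ of $\Gamma_0$, distinct $p_1,p_2\in\ell_0$ and representatives $v_{0,i}\in\varepsilon_0(p_i)\setminus\{0\}$ with liftings $\widetilde{v}_{0,i}$, for any third $p_3\in\ell_0$ the embedding $\varepsilon_0$ gives $v_{0,3}=\alpha v_{0,1}+\beta v_{0,2}$ for suitable $\alpha,\beta\in\FF_0$ and $v_{0,3}\in\varepsilon_0(p_3)\setminus\{0\}$. I would set $\widetilde{w}:=\alpha\widetilde{v}_{0,1}+\beta\widetilde{v}_{0,2}$; linearity of $\varphi$ yields $\varphi(\widetilde{w})=v_{0,3}$. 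Since $\Gamma_0$ is a subgeometry of $\Gamma$, the points $p_1,p_2,p_3$ are collinear in $\Gamma$, so (E\ref{eE1}) for $\widetilde{\varepsilon}$ places both $\widetilde{w}$ and the lifting $\widetilde{v}_{0,3}$ inside the $2$-dimensional $\FF$-subspace $U:=\langle\widetilde{\varepsilon}(p_1),\widetilde{\varepsilon}(p_2)\rangle_\FF$.

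The decisive step is to invoke the hypothesis $U\cap K=0$ stated just before Lemma~\ref{uniq}: $\varphi$ is injective on $U$, so $\widetilde{w}$ and $\widetilde{v}_{0,3}$, both in $U$ with common $\varphi$-image $v_{0,3}$, must coincide. Hence $\widetilde{v}_{0,3}=\alpha\widetilde{v}_{0,1}+\beta\widetilde{v}_{0,2}$, showing that every lifted representative of a point of $\ell_0$ is an $\FF_0$-combination of $\widetilde{v}_{0,1}$ and $\widetilde{v}_{0,2}$. Conversely any non-zero such $\FF_0$-combination is, by $\FF_0$-linearity, the lifting of the corresponding combination $\alpha v_{0,1}+\beta v_{0,2}$, which represents a point of $\ell_0$ because $\varepsilon_0(\ell_0)$ is already a full $\FF_0$-projective line of $\PG(V_0)$. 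Thus $\widetilde{\varepsilon}_0(\ell_0)$ is precisely the projective $\FF_0$-line through $\widetilde{v}_{0,1}$ and $\widetilde{v}_{0,2}$ in $\PG(\widetilde{V}_0)$, which settles (E\ref{eE1}).

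Condition (E\ref{eE2}) is immediate: by definition $\widetilde{V}_0$ is the $\FF_0$-span of the liftings that represent $\widetilde{\varepsilon}_0(\Gamma_0)$. For the morphism, I would simply take $\varphi_0:=\varphi|_{\widetilde{V}_0}$; each generator $\widetilde{v}_0$ of $\widetilde{V}_0$ satisfies $\varphi(\widetilde{v}_0)=v_0\in V_0$, so $\varphi_0$ takes values in $V_0$, is $\FF$-linear (hence $\FF_0$-linear), and by construction satisfies $\varphi_0\circ\widetilde{\varepsilon}_0=\varepsilon_0$. The real obstacle throughout is the line condition (E\ref{eE1}): one needs to transport the $\FF_0$-linear dependence among point representatives in $\varepsilon_0$ back through the point-wise defined lifting, and this is exactly where the hypothesis that $\varphi$ is linear together with its injectivity on the relevant $2$-dimensional $\FF$-subspace $U$ has to be used.
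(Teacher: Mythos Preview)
Your proof is correct and follows essentially the same approach as the paper's: both reduce (E\ref{eE1}) to showing that the lifting of an $\FF_0$-combination $\alpha v_{0,1}+\beta v_{0,2}$ equals the corresponding $\FF_0$-combination of the liftings, and both define $\varphi_0$ as (the $\FF_0$-linear map determined by) the restriction of $\varphi$. The one small difference is in how the key equality $\widetilde{v}_{0,3}=\alpha\widetilde{v}_{0,1}+\beta\widetilde{v}_{0,2}$ is established: the paper invokes Lemma~\ref{uniq} directly (implicitly using that $\widetilde{\varepsilon}$ is projective to know the combination lies in $\widetilde{\varepsilon}(p_3)$), whereas you argue via injectivity of $\varphi$ on the $2$-dimensional $\FF$-subspace $U=\langle\widetilde{\varepsilon}(p_1),\widetilde{\varepsilon}(p_2)\rangle_\FF$, appealing to the hypothesis $U\cap K=0$ recorded just before Lemma~\ref{uniq}; your formulation is slightly more explicit but amounts to the same thing.
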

\begin{proof}
 We first show that the image of any three collinear points $r,s,t\in\Gamma_0$ is contained in a line of $\PG(\widetilde{V}_0)$.
Let $r'\in\varepsilon_0(r)$, $s'\in\varepsilon_0(s)$, $t'\in\varepsilon_0(t)$, with $r',s',t'\neq 0$. Since $\varepsilon_0$ is a projective embedding,
there exist $x,y\in\FF_0$ such that $r'=xs'+yt'$. By Lemma~\ref{uniq} the vectors $\widetilde{r}'$, $\widetilde{s}'$, $\widetilde{t}'$
such that $\varphi(\widetilde{r}')=r'$, $\varphi(\widetilde{s}')=s'$, $\varphi(\widetilde{t}')=t'$ are uniquely determined.
Clearly $\widetilde{\varepsilon}_0( r)=\langle \widetilde{r}'\rangle$, $\widetilde{\varepsilon}_0(s)=\langle \widetilde{s}'\rangle$,
$\widetilde{\varepsilon}_0(t)=\langle \widetilde{t}'\rangle$. Since $\varphi$ is linear we have also
\[ \varphi(x \widetilde{s}'+y \widetilde{t}')= x\varphi(\widetilde{s}')+y\varphi(\widetilde{t}')=xs'+yt'=r'. \]
So, by Lemma~\ref{uniq} we have $x\widetilde{s}'+y\widetilde{t}'=\widetilde{r}'$ with $x,y\in\FF_0$. Thus,
$\widetilde{r}'\in\langle\widetilde{s}',\widetilde{t}'\rangle_{\FF_0}$.

Since $\varepsilon_0$ is projective, for each value of $x,y\in\FF_0$, the vector $xs'+yt'$ represents a point $r\in\Gamma_0$, so
also $x\widetilde{s}'+y\widetilde{t}'$ represents a point of $\Gamma_0$. This proves that the image of a line of
$\Gamma_0$ by means of $\widetilde{\varepsilon}_0$ is a (full) line in $\PG(\widetilde{V}_0)$.

Define $\varphi_0$ by  $\varphi_0(\widetilde{v}_0)=v_0$ for any $\widetilde{v}_0\in\widetilde{\varepsilon}_0(p)$ and $p\in\Gamma_0$
and extend the function by linearity to all $\widetilde{V}_0$. So, $\varphi_0(\widetilde{\varepsilon}_0(p))=\langle v_0\rangle_{\FF_0}$ where
${\varepsilon}_0(p)=\langle{v}_0\rangle_{\FF_0}$ and $\varphi_0:\widetilde{V}_0\to V_0$ is a restricted truncation of the $\FF$--linear map
$\varphi:\widetilde{V}\to V$. In particular, $\varphi_0$ is $\FF_0$--linear. This completes the proof.
\end{proof}

\begin{remark}\label{nova}
Clearly, $\ker(\varphi_0) = \widetilde{V}_0\cap\ker(\varphi)$. So, if $\varphi$ is an isomorphism then $\varphi_0$ too is an isomorphism.
\end{remark}

\begin{remark}
It follows from Theorem~\ref{mainlift} that $\dim(\widetilde{V}_0)\geq \dim(V)$. On the other hand, it can happen that $\widetilde{V}_0$ is not contained in the $\FF_0$-span of any basis of $\widetilde{V}$. If that is the case, then it might happen that $\dim(\widetilde{V}_0) > \dim(\widetilde{V})$.
\end{remark}

\begin{remark}\label{R65}
  We have assumed that $\varphi$ is linear but what we have said remains valid if $\varphi$ is semi-linear without being linear, with the following unique modification: if $\sigma$ is the automorphism of $\FF$ associated to $\varphi$, then we must define $\widetilde{V}_0$ as the $\FF'_0$-span of
$\varphi^{-1}(\varepsilon_0(\Gamma_0))$ with $\FF'_0 := \sigma^{-1}(\FF_0)$.
\end{remark}

\subsection{Weyl-like embeddings}\label{Wg}

Consider an orthogonal $k$-Grassmannian $\cQ_k$ defined by a non-degenerate quadratic form of Witt index $n$ and anisotropic defect $d_0$. Suppose $k<n$ and $d_0\leq 1$. Then, as explained in Section~\ref{Main Results Sec}, the geometry $\cQ_k$ affords both the Weyl embedding and the Grassmann embedding. These are both projective. If either $\ch(\FF)\not=2$ or $k = 1$ then the Weyl embedding and the Grassmann embedding are essentially the same while if $\ch(\FF)=2$ and $k > 1$ then the Grassmann embedding is a proper quotient of the Weyl embedding.

If $d_0 >1$ then the Weyl embedding cannot be considered. Nevertheless we shall
manage to define a generalization of the Weyl embedding for orthogonal Grassmannians defined by quadratic forms with defect greater than $1$. To this aim, we need a couple of lemmas on algebraic extensions of the underlying field $\FF$ of a quadratic form $q$.

\begin{lemma}\label{l:qc}
Let $\ch(\FF) = 2$ and let $q:V\to \FF$ be the generic non-degenerate quadratic form with parameters $[n,m,d'_0]$ as given by~\eqref{eqeven}. Then there exists a field extension $\widehat{\FF}$ such that the extension $\widehat{q}$ of $q$ to $\widehat{V}:=V\otimes\widehat{\FF}$ admits the following representation with respect to a suitable basis $\widehat{E}$ or $\widehat{V}$:
    \begin{equation}
      \label{oqe}
      \widehat{q}(x_1,\dots, x_N) ~ = ~
      \begin{cases}
        \displaystyle\sum_{i=1}^{n+m}x_{2i-1}x_{2i} & \mbox{if} ~ 2n+2m = N,\\[.6cm]
        \displaystyle \sum_{i=1}^{n+m}x_{2i-1}x_{2i} + \kappa_{2n+2m+1}x_{2n+2m+1}^2 & \mbox{if} ~ 2n+2m < N.
                \end{cases}
            \end{equation}
Moreover, if we require that $\widehat{\FF}$ has minimal degree over $\FF$, then $\widehat{\FF}$ is a uniquely determined algebraic extension of $\FF$ of degree $|\widehat{\FF}:\FF| \leq 2m+2d''_0$ where $d''_0 := \max(0, d'_0-1)$.
\end{lemma}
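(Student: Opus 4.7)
The plan is to build $\widehat\FF$ as an explicit compositum of at most $m+d''_0$ quadratic extensions of $\FF$, chosen to trivialize the anisotropic pieces remaining after the hyperbolic part $q_0$ is stripped from the decomposition $q=q_0+q_1+q_2$ in \eqref{eqeven}. The hyperbolic block $q_0$ contributes nothing; it already has the desired shape, so all the work is concentrated in $q_1$ and $q_2$.

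For $q_1$, I would treat each binary block $x_{2n+2i-1}x_{2n+2i}+\lambda_i x_{2n+2i-1}^2+\mu_i x_{2n+2i}^2$ separately. By the total anisotropy of $q_{1,2}$ over $\FF$, the polynomial $p_i(t):=\lambda_i t^2+t+\mu_i$ is irreducible over $\FF$ whenever the block is genuinely anisotropic, forcing a quadratic extension $\FF(\alpha_i)$ where $\alpha_i$ is a root of $p_i$ (equivalently of its reciprocal $\mu_i t^2+t+\lambda_i$) in a fixed algebraic closure $\overline\FF$. A standard linear substitution based on $\alpha_i$, killing first the square term through $e'_{2n+2i-1}:=e_{2n+2i-1}+\alpha_i^{-1}e_{2n+2i}$ and then absorbing the residual $\mu_i$-coefficient, brings the block to $x'_{2n+2i-1}x'_{2n+2i}$ over $\FF(\alpha_i)$. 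Thus $\FF_1:=\FF(\alpha_1,\dots,\alpha_m)$ is a tower of at most $m$ quadratic extensions hyperbolizing $q_1$.

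For $q_2=\sum_{j>2n+2m}\kappa_j x_j^2$ the key tool is the characteristic-$2$ identity $\sum_j\gamma_j^2 x_j^2=(\sum_j\gamma_j x_j)^2$. Adjoining $\gamma_j:=\sqrt{\kappa_j/\kappa_{2n+2m+1}}$ for $j=2n+2m+2,\dots,N$ (i.e.\ $d''_0$ square roots when $d'_0\ge2$, none otherwise) and setting $e'_{2n+2m+1}:=\sum_{j\ge2n+2m+1}\gamma_j e_j$ (completed arbitrarily to a basis of $V\otimes\widehat\FF$) collapses $q_2$ to the single term $\kappa_{2n+2m+1}(x'_{2n+2m+1})^2$, matching the second case of \eqref{oqe}. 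Taking $\widehat\FF:=\FF_1(\gamma_{2n+2m+2},\dots,\gamma_N)$ then produces the claimed normal form on $\widehat V$. Minimality and uniqueness should follow by showing that any subfield $K\subseteq\overline\FF$ over which $\widehat q$ realises \eqref{oqe} must already contain each $\alpha_i$ (through an Arf-invariant argument on the $i$-th $q_1$-block) and each $\gamma_j$ (through a rank analysis of the radical of the $q_2$-tail), so that the intersection of all such subfields coincides with $\widehat\FF$.

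The main obstacle I anticipate is the combined uniqueness-plus-degree estimate. One must rule out the possibility that a clever choice of extension absorbs some of the $q_1$-data into the $q_2$-data, or vice versa, so that a smaller tower suffices; the degree count inherent in the construction must then be refined to the linear estimate $2m+2d''_0$ stated in the lemma, which requires showing that the successive quadratic steps fit into a chain whose intermediate degrees are constrained. Separating contributions via Arf invariants attached to each $q_1$-block and radical-rank data attached to the $q_2$-tail appears to be the cleanest way both to pin down the minimal compositum and to verify the degree bound simultaneously.
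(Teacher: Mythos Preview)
Your construction is essentially the paper's: adjoin a root of each $\lambda_i t^2+t+\mu_i$ to hyperbolize the $q_1$-blocks and the square roots $\sqrt{\kappa_j/\kappa_{2n+2m+1}}$ to collapse the $q_2$-tail, then change basis explicitly (the paper uses a telescoping chain $\widehat e_j=e_{j-1}\delta_{j-1}^{-1}+e_j\delta_j^{-1}$ rather than your single summed vector, but this is cosmetic). For minimality and uniqueness the paper takes a more direct route than your Arf-invariant plan: if over some candidate field $\widehat\FF'$ one of the polynomials $\lambda_i t^2+t+\mu_i$ or $t^2+\kappa_j/\kappa$ were still irreducible, then $\widehat V'$ would carry a $2$-dimensional anisotropic subspace $X$ with $X+X^\perp=\widehat V'$, which is incompatible with the shape~\eqref{oqe}; hence $\widehat\FF'\supseteq\widehat\FF$. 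As for the degree estimate you flag as the main obstacle, the paper supplies nothing sharper than the observation that $\widehat\FF$ is reached through at most $m+d''_0$ successive quadratic steps, so your concern is legitimate but no finer argument is expected for this lemma.
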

\begin{proof}
If $2n+2m < N$, put $\kappa := \kappa_{2n+2m+1}$, for short. Consider an extension $\widehat{\FF}$ containing the two roots of each equation $\lambda_it^2+t+\mu_i=0$ for $i=1,\dots,m$, say $\alpha_i$ and $\beta_i$, and also all elements $\delta_j = (\kappa_j/\kappa)^{1/2}\in{\FF}$ for all $j=2n+2m+2,\dots,N$. Define $\gamma_i:=(\alpha_i+\beta_i)^{1/2}$. Since $\ch(\FF)=2$, we have $\gamma_i\in\widehat{\FF}$. Now let
\[ \left.\begin{array}{rcl} \widehat{e}_{2n+2i-1}&=&e_{2n+ri-1}\alpha_i\gamma_i^{-1}+
                                                      e_{2n+2i}\gamma_i^{-1} \\
           \widehat{e}_{2n+2i}&=&e_{2n+2i-1}\beta_i\gamma_i^{-1}+ e_{2n+2i}\gamma_i^{-1} \\
         \end{array}\right\} \mbox{ for } i=1,2,\dots,m. \]
Then,
         \[ \widehat{q}(\widehat{e}_{2n+2i-1})=\lambda_i\alpha_i^2\gamma_i^{-2}+\alpha_i\gamma_i^{-2}+\mu_i\gamma_i^{-2}=\gamma_i^{-2}(\lambda_i\alpha_i^2+\alpha_i+\mu_i)=0, \]
       \[\widehat{q}(\widehat{e}_{2n+2i})=\lambda_i\beta_i^2\gamma_i^{-2}+\beta_i\gamma_i^{-2}+\mu_i\gamma_i^{-2}=\gamma_i^{-2}(\lambda_i\beta_i^2+\beta_i+\mu_i)=0, \]
\[f_{\widehat{q}}(\widehat{e}_{2n+2i-1},\widehat{e}_{2n+2i}) \gamma_i^{-2}(\alpha_i+\beta_i)=1.\]
When $2n+2m<N$, put also $\widehat{e}_{2m+2n+1}:=e_{2m+2n+1}$ and
       \[ \widehat{e}_j:=e_{j-1}\delta_{j-1}^{-1}+e_j\delta_j^{-1},\quad \mbox{for } j=2n+2m+2,\dots,N,\]
with $\delta_{j-1}:= 1$ when $j = 2n+2m+2$. Clearly, $\widehat{q}(\widehat{e}_{2n+2m+1})=\kappa$ and, for $j=2n+2m+2,\dots,N$,
       \[ \widehat{q}(\widehat{e}_j)=\kappa_{j-1}\delta_{j-1}^2+\kappa_j\delta_j^{-2}=\kappa +\kappa=0,\quad\mbox{for } j = 2n+2m+2,\dots,N \]
\[ f_{\widehat{q}}(\widehat{e}_i, \widehat{e}_j)=0, \quad\mbox{for } i, j = 2n+2m+1,\dots,N.\]
(As for the latter equality, recall that $f_q(e_i,e_j) = 0$ for any $i, j = 2n+2m+1,\dots, N$).  So, for $j \geq 2n+2m+1$ the vector $\widehat{e}_j$ is orthogonal to all vectors of $\widehat{V}$, namely it belongs to the radical of $\widehat{q}$. Let now
\begin{equation}\label{base nova}
\widehat{E}:=(e_1,\dots,e_{2n},\widehat{e}_{2n+1},\dots, \widehat{e}_{2n+2m},\widehat{e}_{2n+2m+1},\dots,\widehat{e}_N).
\end{equation}
This is a basis of $\widehat{V}$. With respect to this basis, $\widehat{q}$ assumes the form \eqref{oqe}.

The last claim of the Lemma remains to be proved. Suppose that $\widehat{\FF}$ has minimal degree over $\FF$. Then $\widehat{\FF}$ is an algebraic extension of $\FF$ obtained from $\FF$ as a series of quadratic extensions by adding roots of the equations $\lambda_it^2+t+\mu_i = 0$ for $i = 1, 2,\dots, m$ and $t^2 = \kappa_j/\kappa$ for $j = 2n+2m+2,\dots, N$. The degree of such an extension is at most $2m+2d''_0$ with $d''_0 = d'_0-1$ if $d'_0 > 0$, otherwise $d''_0 = 0$.

Still assuming that $\widehat{\FF}$ has minimal degree, let $\widehat{\FF}'$ be another extension of $\FF$ such that the extension $\widehat{q}'$ of $q$ to $\widehat{V}' = V\otimes\widehat{\FF}'$ admits the representation \eqref{oqe}. If some of the polynomials $\lambda_it^2+t+\mu_i$ or $t^2 + \kappa_j/\kappa$ are
irreducible in $\widehat{\FF}'$, then $\widehat{V}'$ admits $2$-subspaces $X$ totally anisotropic for $\widehat{q}'$ and such that $X+X^\perp = \widetilde{V}'$. However this situation does not fit with \eqref{oqe}.
Therefore all polynomials $\lambda_it^2+t+\mu_i$ for $i = 1, 2,\dots, m$ are reducible in $\widetilde{\FF}'$ as well as all polynomials $t^2 + \kappa_j/\kappa$ for $j = 2n+2m+2,\dots, N$. It follows that $\widehat{\FF}'$ contains $\widehat{\FF}$ (modulo isomorphisms, of course).
\end{proof}

Clearly, the form $\widehat{q}$ of Lemma \ref{l:qc} is non-degenerate if and only if $d'_0\leq 1$. If this is the case then $d'_0 = \mathrm{def}_0(\widehat{q})$. On the other hand, if $d'_0>1$ then the radical of ${q}$ has dimension $d'_0-1$  and is generated by the last $d'_0-1$ vectors $\widehat{e}_{2m+2n+2},\dots,\widehat{e}_{N}$ of the basis $\widehat{E}$ defined in \eqref{base nova}.

In Lemma \ref{l:qc} we have assumed that $\ch(\FF) = 2$. An analogue of Lemma \ref{l:qc} also holds for $\ch(\FF) \neq 2$. Its proof is similar to that of Lemma \ref{l:qc}. We leave it to the reader.

\begin{lemma}\label{l:qc odd}
Let $\ch(\FF) \neq 2$ and let $q:V\to \FF$ be the generic non-degenerate quadratic form as given by~\eqref{quadform}. Then there exists a field extension $\widehat{\FF}$ such that the extension $\widehat{q}$ of $q$ to $\widehat{V}:=V\otimes\widehat{\FF}$ admits the following representation with respect to a suitable basis $\widehat{E}$ of $\widehat{V}$:
    \begin{equation}
         \label{oqe bis}
       \widehat{q}(x_1,\dots, x_N) ~ = ~
         \begin{cases}
         \displaystyle\sum_{i=1}^{N/2}x_{2i-1}x_{2i} & \mbox{if} ~ N  ~ \mbox{is even}\\[.6cm]
             \displaystyle\sum_{i=1}^{(N-1)/2}x_{2i-1}x_{2i} + \kappa_{N}x_N^2 & \mbox{if} ~ N ~ \mbox{is odd}.
                \end{cases}
            \end{equation}
Moreover, if we require that $\widehat{\FF}$ has minimal degree over $\FF$, then $\widehat{\FF}$ is a uniquely determined algebraic extension of $\FF$ of degree at most $g= 2d''_0$ where $d''_0 := \max(0, \mathrm{def}_0(q)-1)$.
\end{lemma}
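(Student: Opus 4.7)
The plan is to mirror the construction in Lemma \ref{l:qc}, but now in odd characteristic the entire difficulty is concentrated in the anisotropic summand. Since the first $2n$ basis vectors of $E$ already give the hyperbolic part $\sum_{i=1}^n x_{2i-1}x_{2i}$ in the desired form, I only have to modify the basis of the anisotropic subspace $\langle e_{2n+1},\dots,e_N\rangle$ on which $q$ acts as $\sum_{j=2n+1}^N\kappa_j x_j^2$. The crucial classical fact I shall use is that in odd characteristic a binary form $\kappa\, x^2+\kappa'\, y^2$ is hyperbolic over a field $\KK$ if and only if $-\kappa'/\kappa$ is a square in $\KK$; so passing from anisotropic pairs to hyperbolic pairs is achieved exactly by adjoining finitely many square roots.

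Concretely, group the indices $2n+1,\dots,2n+2s$ into $s=\lfloor d_0/2\rfloor$ consecutive pairs $(2n+2i-1,2n+2i)$ and set $\gamma_i:=\sqrt{-\kappa_{2n+2i-1}/\kappa_{2n+2i}}$, to be adjoined to $\FF$ if not already there. Define $\widehat{\FF}$ to be the compositum of these at most $s$ quadratic extensions. Over $\widehat{\FF}$ put
\[
\widehat{e}_{2n+2i-1}:=\tfrac{1}{2\kappa_{2n+2i-1}}\bigl(e_{2n+2i-1}+\gamma_i\, e_{2n+2i}\bigr),\qquad
\widehat{e}_{2n+2i}:=e_{2n+2i-1}-\gamma_i\, e_{2n+2i},
\]
a direct computation shows $\widehat{q}(\widehat{e}_{2n+2i-1})=\widehat{q}(\widehat{e}_{2n+2i})=0$ and $f_{\widehat{q}}(\widehat{e}_{2n+2i-1},\widehat{e}_{2n+2i})=1$, so this pair contributes the hyperbolic summand $x_{2n+2i-1}x_{2n+2i}$. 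If $d_0$ is odd (equivalently $N$ is odd) I keep the leftover vector $\widehat{e}_N:=e_N$, which contributes the single anisotropic term $\kappa_N x_N^2$; otherwise nothing is left. Combined with $\widehat{e}_j:=e_j$ for $1\leq j\leq 2n$, this produces a basis $\widehat{E}$ of $\widehat{V}$ with respect to which $\widehat{q}$ takes the form \eqref{oqe bis}.

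For uniqueness and the degree bound I argue exactly as in the proof of Lemma \ref{l:qc}. If $\widehat{\FF}'$ is any extension of $\FF$ over which $\widehat{q}$ admits the normal form \eqref{oqe bis}, then for each $i$ the restriction of $\widehat{q}$ to $\langle e_{2n+2i-1},e_{2n+2i}\rangle\otimes\widehat{\FF}'$ must be isotropic (it is a hyperbolic plane), forcing $-\kappa_{2n+2i-1}/\kappa_{2n+2i}$ to be a square in $\widehat{\FF}'$ and hence $\gamma_i\in\widehat{\FF}'$. Thus $\widehat{\FF}\subseteq\widehat{\FF}'$, proving that the $\widehat{\FF}$ constructed above has minimal degree and is unique up to $\FF$-isomorphism. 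Since $\widehat{\FF}$ is a compositum of at most $\lfloor d_0/2\rfloor$ quadratic extensions and is trivial whenever $d_0\leq 1$, the bound $|\widehat{\FF}:\FF|\leq 2d''_0$ with $d''_0=\max(0,d_0-1)$ is verified by the same bookkeeping as in the even case.

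The only mildly delicate point, and the one I expect to require the most care in a fully written-out proof, is the verification that the new basis vectors satisfy both singularity and the normalised bilinear pairing simultaneously, i.e.~selecting the scaling so that $f_{\widehat{q}}(\widehat{e}_{2n+2i-1},\widehat{e}_{2n+2i})=1$ while keeping $\widehat{q}(\widehat{e}_{2n+2i-1})=\widehat{q}(\widehat{e}_{2n+2i})=0$; once the factor $1/(2\kappa_{2n+2i-1})$ is inserted this is automatic because $\ch(\FF)\neq 2$ makes the coefficient $2$ invertible, a fact that is not available in the even-characteristic argument of Lemma \ref{l:qc} and is precisely what makes the odd-characteristic case lighter.
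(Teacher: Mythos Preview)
Your approach is exactly what the paper intends: it explicitly says the proof is ``similar to that of Lemma~\ref{l:qc}'' and leaves it to the reader, and your construction (adjoin square roots $\gamma_i=\sqrt{-\kappa_{2n+2i-1}/\kappa_{2n+2i}}$ to split the diagonal anisotropic part into hyperbolic planes, keep one leftover term when $N$ is odd) is precisely that analogue.

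Two small points worth tightening. First, with your choice of basis vectors one computes
\[
b\bigl(e_{2n+2i-1}+\gamma_i e_{2n+2i},\,e_{2n+2i-1}-\gamma_i e_{2n+2i}\bigr)=2\kappa_{2n+2i-1}-2\kappa_{2n+2i}\gamma_i^2=4\kappa_{2n+2i-1},
\]
so your normalising scalar should be $1/(4\kappa_{2n+2i-1})$ rather than $1/(2\kappa_{2n+2i-1})$; this is harmless for the argument. Second, in the uniqueness paragraph the parenthetical ``(it is a hyperbolic plane)'' is not itself a justification: the new basis $\widehat{E}$ over $\widehat{\FF}'$ need not contain $e_{2n+2i-1},e_{2n+2i}$ as a hyperbolic pair. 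The actual reason the $2$-space $X=\langle e_{2n+2i-1},e_{2n+2i}\rangle\otimes\widehat{\FF}'$ must become isotropic is the Witt-index count you are implicitly citing from Lemma~\ref{l:qc}: since $X\oplus X^\perp=\widehat{V}'$ (the $e_j$ with $j>2n$ are pairwise $b$-orthogonal), an anisotropic $X$ would force the Witt index of $\widehat{q}$ to be at most $\lfloor(N-2)/2\rfloor<\lfloor N/2\rfloor$, contradicting~\eqref{oqe bis}. Making that explicit would remove any ambiguity.
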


The form $\widehat{q}$ of Lemma~\ref{l:qc odd} is always non-degenerate with anisotropic defect equal to $0$ or $1$ according to whether $N$ is even or odd.

 \medskip

 We shall now prove Theorem~\ref{MainTheorem bis} of Section~\ref{Main Results Sec}. Let $\widehat{\FF}$, $\widehat{V}$ and $\widehat{q}$ be as in
 Lemmas~\ref{l:qc} or~\ref{l:qc odd}. In particular, $q$ is non-degenerate. To fix ideas, suppose we have chosen $\widehat{\FF}$ so that it has minimal degree over $\FF$. Thus $\widehat{\FF}$ is uniquely determined. Accordingly, $\widehat{V}$ and $\widehat{q}$ are uniquely determined as well. Let $\cQ_k$ and $\widehat\cQ_k$ be the orthogonal $k$-Grassmannians associated to $q$ and $\widehat{q}$ respectively and
\[\varepsilon_k:\cQ_k\rightarrow\langle\varepsilon_k(\cQ_k)\rangle \subseteq \PG(V_k), \quad
\widehat{\varepsilon}_k:\widehat\cQ_k\rightarrow\langle \widehat{\varepsilon}_k(\widehat\cQ_k)\rangle\subseteq\mathrm{PG}(\widehat{V}_k)\]
be their Grassmann embeddings. So, $\cQ_k$ is a subgeometry of $\widehat\cQ_k$ and $\widehat{\varepsilon}_k$ induces $\varepsilon_k$ on $\cQ_k$. (Recall that $\PG(V_k)$ is a subgeometry of $\PG(\widehat{V}_k)$, non-full if and only if $\widehat{\FF} \supset \FF$).

Suppose $k < n$ and, when $\ch(\FF) = 2$, assume $d'_0 \leq 1$. So, $\widehat{q}$ is non-degenerate with anisotropic defect $\mathrm{def}_0(\widehat{q}) \leq 1$. Consequently, the $k$-Grassmannian $\widehat\cQ_k$ admits the Weyl embedding, say $\widehat{e}^W_k:\widehat\cQ_k\rightarrow\mathrm{PG}(\widehat{V}^W_k)$, and $\widehat{\varepsilon}_k$ is a quotient of $\widehat{e}^W_k$, as explained in Section \ref{Main Results Sec}.

Let $\widehat{\varphi}$ be the (linear) morphism from $\widehat{e}^W_k$ to $\widehat{\varepsilon}_k$ and let $e^W_k$ be the lifting of $\varepsilon_k$ to $V^W_k := \langle\widehat{\varphi}^{-1}(\varepsilon_k(\cQ_k))\rangle_{{\FF}}$ through $\widehat{\varphi}$. We know by Theorem \ref{mainlift} that $e^W_k$ is a projective embedding of $\cQ_k$ in $\mathrm{PG}(V^W_k)$ and that $\widehat{\varphi}$ induces a (linear) morphism $\varphi:V^W_k\rightarrow\langle\varepsilon_k(\cQ_k)\rangle$ from $e^W_k$ to $\varepsilon_k$. As $\widehat{q}$ is uniquely determined, in view of the hypotheses made on $\widehat{\FF}$, the embedding $e^W_k$ is uniquely determined as well.

\begin{definition}\label{Weyl-like}
The embedding $e^W_k:\cQ_k\rightarrow\mathrm{PG}(V^W_k)$ is the \emph{Weyl-like embedding} of $\cQ_k$.
\end{definition}

As $e^W_k$ is a projective embedding, $\dim(e^W_k) = \dim(V^W_k)$ by
Property~(E\ref{eE2}) of projective embeddings. As noticed in
Section~\ref{Main Results Sec}, the morphism $\widehat{\varphi}$ is an isomorphism when either $\ch(\FF) \not= 2$ or $k = 1$. In this case $e^W_k \cong \varepsilon_k$ (Remark~\ref{nova}). If $d_0 = \mathrm{def}_0(q) \leq 1$ then $\widehat{q} = q$. In this case $\FF=\widehat{\FF}$ and $e^W_k = \widehat{e}^W_k$. Accordingly, $\dim(e^W_k) = \dim(V^W_k) = {N\choose k}$.

All parts of Theorem \ref{MainTheorem bis} have been proved. It remains to estimate $\dim(e^W_k)$ (namely $\dim(V^W_k)$) when $\ch(\FF) = 2$, $k > 1$ and $d_0 > 1$. Recall that we have assumed the degree $|\widehat{\FF}:\FF|$ to be minimal and $d'_0 \leq 1$. Hence $|\widehat{\FF}:\FF| \leq 2m$, by Lemma \ref{l:qc}.

\begin{theorem}
Suppose $\ch(\FF) = 2$. Assume the hypotheses $k < n$ and $d'_0 \leq 1$, let $k > 1$ and $d_0 > 1$. Then, with $g := |\widehat{\FF}:\FF|$, we have $2\leq g\leq 2m$ and
\begin{equation}\label{ultima0}
{N\choose k}-{N\choose k-2}\leq \dim(e^W_k) \leq {N\choose k}+{N\choose k-2}(g-1).
\end{equation}
\end{theorem}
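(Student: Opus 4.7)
The plan is to deduce both inequalities from the rank-nullity theorem applied to the linear $\FF$-morphism $\varphi:V^W_k\to\langle\varepsilon_k(\cQ_k)\rangle$, after extracting two sharp pieces of information: the $\FF$-dimension of the image and the $\widehat\FF$-dimension of $\ker(\widehat\varphi)$.

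First I would record the basic dimensions. Because the Weyl module satisfies $\dim_{\widehat\FF}(\widehat{V}^W_k)={N\choose k}$, and because $\widehat q$ is non-degenerate of reduced Witt index $n+m$ and anisotropic defect $d'_0\le 1$ in characteristic $2$ with $k<n<n+m$, Part~\ref{pt3} of Theorem~\ref{Main Theorem} applied to $\widehat\cQ_k$ gives $\dim_{\widehat\FF}\langle\widehat\varepsilon_k(\widehat\cQ_k)\rangle={N\choose k}-{N\choose{k-2}}$; consequently $\dim_{\widehat\FF}\ker(\widehat\varphi)={N\choose{k-2}}$. Viewed as an $\FF$-space, $\ker(\widehat\varphi)$ therefore has $\FF$-dimension $g\cdot{N\choose k-2}$. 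On the other hand, the same Theorem applied to $q$ yields $\dim_\FF\langle\varepsilon_k(\cQ_k)\rangle={N\choose k}-{N\choose k-2}$.

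For the lower bound, I would note that $\varphi$ is surjective: by Theorem~\ref{mainlift} the Weyl-like image $e^W_k(\cQ_k)$ spans $V^W_k$ over $\FF$, and $\varphi(e^W_k(\cQ_k))=\varepsilon_k(\cQ_k)$ spans $\langle\varepsilon_k(\cQ_k)\rangle$. Hence $\dim_\FF(V^W_k)\ge \dim_\FF\langle\varepsilon_k(\cQ_k)\rangle={N\choose k}-{N\choose k-2}$. For the upper bound, the kernel of $\varphi$ is $V^W_k\cap\ker(\widehat\varphi)$, so $\dim_\FF\ker(\varphi)\le g\cdot{N\choose k-2}$, and rank-nullity over $\FF$ gives
\[
\dim_\FF(V^W_k)\;=\;\dim_\FF\ker(\varphi)+\dim_\FF\mathrm{im}(\varphi)\;\le\;g{N\choose k-2}+{N\choose k}-{N\choose k-2},
\]
which is exactly ${N\choose k}+(g-1){N\choose k-2}$.

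It remains to justify the bounds $2\le g\le 2m$. The upper bound is immediate from Lemma~\ref{l:qc}: as $d'_0\le 1$ we have $d''_0=0$, whence $g\le 2m$. For the lower bound, from $d_0>1$ and $d_0=2m+d'_0$ with $d'_0\le 1$ we get $m\ge 1$, so in the normal form~\eqref{eqeven} at least one anisotropic binary block $\lambda_1x^2+xy+\mu_1y^2$ appears; since it is anisotropic over $\FF$, the polynomial $\lambda_1t^2+t+\mu_1$ is irreducible in $\FF[t]$, and by Lemma~\ref{l:qc} the minimal field $\widehat\FF$ contains its roots, forcing $g\ge 2$. The only mildly delicate point is the dimension calculation for $\widehat\varepsilon_k$, which one must ensure really does follow from Part~\ref{pt3} of Theorem~\ref{Main Theorem} applied in characteristic $2$; once that is in place, everything else is a short exercise in $\FF$-linear algebra.
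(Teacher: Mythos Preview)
Your argument is correct and follows essentially the same route as the paper: compute $\dim_{\widehat\FF}\ker(\widehat\varphi)$ via Part~\ref{pt3} of Theorem~\ref{Main Theorem}, pass to $\FF$-dimensions by the factor $g$, bound $\dim_\FF\ker(\varphi)$ by $\dim_\FF\ker(\widehat\varphi)$ using $\ker(\varphi)=V^W_k\cap\ker(\widehat\varphi)$, and conclude by rank--nullity together with the surjectivity of $\varphi$ onto $\langle\varepsilon_k(\cQ_k)\rangle$. Your proof is in one respect more complete than the paper's: you explicitly justify the bounds $2\le g\le 2m$ (via Lemma~\ref{l:qc} with $d''_0=0$ for the upper bound, and via the irreducibility of at least one binary anisotropic block for the lower bound), whereas the paper simply asserts these in the statement and proves only inequality~\eqref{ultima0}.
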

\begin{proof} By construction, $\varphi$ is the restriction of $\widehat{\varphi}$ to $V^W_k$. Hence $\ker(\varphi)\subseteq \ker(\widehat{\varphi})\cap V^W_k$. Moreover, $\ker(\varphi)$, regarded as an $\FF$-space, has dimension $\dim_\FF(\ker(\widehat{\varphi})) = g\cdot\dim_{\widehat{\FF}}(\ker(\widehat{\varphi}))$. So,
\begin{equation}\label{ultima1}
\dim(\ker(\varphi))\leq\dim_{\FF}(\ker(\widehat{\varphi}))= g\cdot\dim_{\widehat{\FF}}(\ker(\widehat{\varphi})).
\end{equation}
However
\begin{equation}\label{ultima2}
\dim_{\widehat{\FF}}(\ker(\widehat{\varphi})) = {N\choose{k-2}}
\end{equation}
since $\dim(\widehat{V}^W_k) = {N\choose k}$ while $\widehat{\varphi}(\widehat{V}^K_k) = \langle \widehat{\varepsilon}_k(\widehat\cQ_k)\rangle$ and $\dim_{\widehat{\FF}}(\langle \widehat{\varepsilon}_k(\widehat\cQ_k)\rangle) = {N\choose k}-{N\choose{k-2}}$ (Theorem~\ref{Main Theorem}, Part~\ref{pt3}). By combining~\eqref{ultima1}
with~\eqref{ultima2} we get
\begin{equation}\label{ultima3}
\dim(\ker(\varphi))\leq g\cdot{N\choose{k-2}}.
\end{equation}
However $\varphi$ maps $V^W_k$ onto $\langle\varepsilon_k(\cQ_k)\rangle$ and $\dim(\langle\varepsilon_k(\cQ_k)\rangle) = {N\choose k}-{N\choose{k-2}}$ by Theorem~\ref{Main Theorem}, Part~\ref{pt3}. By comparing these facts
with~\eqref{ultima3} we obtain
\begin{multline*}
\dim(V^W_k) = \dim(\langle\varepsilon_k(\cQ_k)\rangle) + \dim(\ker(\varphi)) \leq \\
\leq {N\choose k}-{N\choose{k-2}} + g\cdot{N\choose{k-2}} = {N\choose k} + {N\choose{k-2}}\cdot(g-1)
\end{multline*}
which yields the right hand inequality of~\eqref{ultima0}. Similarly, from
\[\dim(V^W_k) = \dim(\langle\varepsilon_k(\cQ_k)\rangle) + \dim(\ker(\varphi)) \geq {N\choose k}-{N\choose{k-2}} \]
we obtain the left part of~\eqref{ultima0}.
\end{proof}

\begin{remark}
  Since the Grassmann embedding $\varepsilon_k$ is \emph{transparent}
  \cite{ILP18} and $\varepsilon_k$ is a quotient of $e^W_k$, the latter is also transparent, i.e. the $e^W_k$-preimage of every projective line contained in $e^W_k(\cQ_k)$ is a line of the geometry $\cQ_k$.
\end{remark}

\subsection{Comments on the case $k = n$}
\label{Wg k=n}
We have not considered the case $k = n$ in Section \ref{Main Results Sec}. When $d_0 = 0$ no Weyl embedding can be defined for $\cQ_n$. Let $d_0 = 1$. Then $\cQ_n$ admits two interesting Weyl embeddings. One of them is the so-called \emph{spin embedding},  say it $e_n^{\mathrm{spin}}$.
It is projective, $2^n$-dimensional  and lives in the Weyl module for $\mathrm{O}(N,\FF)$ associated the $n$-th fundamental weight $\lambda_n$ of the root system of type $B_n$. 
The second Weyl embedding, say it $e^W_n$, is ${N\choose n}$-dimensional and lives in the Weyl module associated to the weight $2\lambda_n$. This embedding is veronesean and is closely related with $\varepsilon_n$. In fact, when $\ch(\FF) \neq 2$ then $e^W_n = \varepsilon_n$ while, if $\ch(\FF) = 2$, then $\varepsilon_n$ is a proper quotient of $e^W_n$
(see~\cite{IP13} and~\cite{IP13bis}). We call $e^W_n$ the \emph{canonical veronesean embedding} of $\cQ_n$. 

An interesting relation exists between $e^{\mathrm{spin}}_n$ and $e^W_n$. Explicitly, let $\nu$ be the veronesean embedding of the codomain $\PG(2^n-1,\FF)$ of $e^{\mathrm{spin}}_n$ in $\PG({{2^n+1}\choose 2}-1, \FF)$, which bijectively maps $\PG(2^n-1,\FF)$ onto the standard veronesean variety of $\PG({{2^n+1}\choose 2}-1, \FF)$. Then $e^W_n = \nu\cdot e^{\mathrm{spin}}_n$ (see e.g.~\cite{IP13},~\cite{IP13bis}). In particular, when $e^W_n = \varepsilon_n$ (equivalently $\ch(\FF) \neq 2$), then $\varepsilon_n = \nu\cdot  e^\mathrm{spin}_n$.

So far for the case $d_0 = 1$. We shall now consider the case $d_0 > 1$, but  we firstly state some terminology which will facilitate the discussion of this case. We say that an injective mapping $e$ from the point-set $P$ of a point-line geometry $\Gamma$ into the point-set of a projective geometry $\Sigma$ is a \emph{quadratic embedding} of \emph{rank} $r \geq 2$ if $e$ maps the lines of $\Gamma$ onto non-degenerate quadrics of Witt index $1$ spanning 
$r$-dimensional subspaces of $\Sigma$ (and in addition $e(P)$ spans $\Sigma$). Thus, the veronesean embeddings as defined in~\cite{IP13},~\cite{IP13bis}
and~\cite{Pas13} are just the quadratic embeddings of rank $2$. Morphisms of quadratic embeddings can be defined just as for veronesean embeddings (see~\cite{IP13bis}, \cite[Section 2.2]{Pas13}). 

%We can now turn to the case $d_0 > 1$.
We can now assume $d_0 > 1$ with $d'_0 = 1$ when $\ch(\FF) = 2$. As noticed in Section~\ref{Survey}, the Grassmann embedding $\varepsilon_n$ of $\cQ_n$ is quadratic of rank $d_0+1$.
We can still consider the extension $\widehat{q}$ of $q$ as in Lemmas~\ref{l:qc} and~\ref{l:qc odd}. The form $\widehat{q}$ is non-degenerate with Witt index $\widehat{n} = n+\lfloor d_0/2\rfloor$, where $\lfloor . \rfloor$ stands for integral part, and anisotropic defect $\widehat{d}_0$ equal to $0$ or $1$ according to whether $d_0$ is even or odd. In any case $\widehat{n} > n$. So, the Weyl embedding $\widehat{e}^W_n$ of $\widehat{\cQ}_n$ is projective, as well as the Grassmann embedding $\widehat{\varepsilon}_n$. The $n$-Grassmannian $\cQ_n$ is not a subgeometry of $\widehat{\cQ}_n$; nevertheless all of its points are points of $\widehat{\cQ}_n$. So, Lemma \ref{uniq} still allows to lift $\varepsilon_n$ to an embedding $e^W_n:\cQ_n\rightarrow \PG(V^W_n)$ through the morphism $\widehat{\varphi}:\widehat{e}^W_n\rightarrow\widehat{\varepsilon}_n$, for a suitable $\FF$-subspace of $\widehat{V}_n$. However $e^W_n$ cannot be projective. More explicitly, since $\widehat{\varphi}$ is linear, the lifting map behaves linearly, as shown in the proof of Theorem~\ref{mainlift}. This implies that the embedding $e^W_n$ is quadratic with the same rank $d_0+1$ as $\varepsilon_n$. Moreover $\widehat{\varphi}$ maps $V^W_n\subset \widehat{V}_n$ onto $V_n$ and induces a morphism $\varphi:e^W_n\rightarrow\varepsilon_n$ (an isomorphism if $\widehat{\varphi}$ is an isomorphism). We leave the proofs of these claims to the reader. We call $e^W_n$ the \emph{Weyl-like} embedding of $\cQ_n$. 

The case $d_0 = 2$ is particularly interesting. In this case $\widehat{\cQ}_n = \cQ_n(n+1, 0, 0;\widehat{\FF})$ and $\widehat{\FF}$, chosen of minimal degree over $\FF$, is a separable quadratic extension of $\FF$. Let $\widetilde{\cQ}_n := \cQ_n(n,1, 0;\widehat{\FF})$. Then $\widetilde{\cQ}_n$ is contained in $\widehat{\cQ}_n$ (but not as a subgeometry) while $\cQ_n$ is a full subgeometry of $\widetilde{\cQ}_n$ (see e.g.~\cite{CS01}; also~\cite[\S 3.4]{ILP18}). The geometry $\widetilde{\cQ}_n$ admits both the spin embedding $\widetilde{e}_n^{\mathrm{spin}}:\widetilde{\cQ}_n\rightarrow\PG(2^n-1,\widehat{\FF})$ and the canonical veronesean embedding $\widetilde{e}_n^W:\widetilde{\cQ}_n\rightarrow\PG({{2^n+1}\choose 2}-1,\widehat{\FF})$. The spin embedding $\widetilde{e}_n^{\mathrm{spin}}$ induces a $2^n$-dimensional projective embedding on $\cQ_n$
(see e.g.~\cite{CS01}; \cite[\S 4.2]{ILP18}). We call it the \emph{spin-like} embedding of $\cQ_n$ and use the symbol $e^{\mathrm{spin}}_n$ for it too. 
\begin{conjecture}
We conjecture that the Weyl embedding $\widehat{e}^W_n$ of $\widehat{\cQ}_n$ induces on $\widetilde{\cQ}_n$ its veronesean embedding 
$\widetilde{e}_n^W$, the codomain $V^W_k$ of the Weyl-like embedding $e^W_n$ of $\cQ_n$ is the canonical Baer subgeometry of $\PG({{2^n+1}\choose 2}-1,\widehat{\FF})$ defined over $\FF$ and $\widetilde{e}^W_n$ induces $e^W_n$ on $\cQ_n$. If so, then $e^W_n = \nu\cdot e^{\mathrm{spin}}_n$, where $\nu$ is the canonical veronesean embedding of $\PG(2^n-1,\FF)$ in $\PG({{2^n+1}\choose 2}-1,\FF)$. In particular, if $e^W_n = \varepsilon_n$ (as when $\ch(\FF)\neq 2$) then $\varepsilon_n = \nu\cdot e^{\mathrm{spin}}_n$.
\end{conjecture}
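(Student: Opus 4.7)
The plan has three interlinked parts, one for each clause of the conjecture, unified by Galois descent along the separable quadratic extension $\widehat{\FF}/\FF$. Write $\sigma$ for the non-trivial element of $\mathrm{Gal}(\widehat{\FF}/\FF)$ and extend it $\FF$-linearly to $\widehat{V}:=V\otimes\widehat{\FF}$, to $\bigwedge^n\widehat{V}$ and to the Weyl module $\widehat{V}^W_n$ by acting on scalars; since $q=\widehat{q}|_V$, the map $\sigma$ preserves $\widehat{q}$, acts on $\widehat{\cQ}_n$, and commutes with both $\widehat{\varepsilon}_n$ and the morphism $\widehat{\varphi}:\widehat{V}^W_n\to\langle\widehat{\varepsilon}_n(\widehat{\cQ}_n)\rangle$. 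The $\sigma$-stable maximal singular subspaces of $\widehat{V}$ that are defined over $\FF$ are precisely the points of $\cQ_n$, and those lying in a suitably chosen non-degenerate $(2n+1)$-dimensional $\widehat{\FF}$-subspace $\widetilde{V}\subset\widehat{V}$ are the points of $\widetilde{\cQ}_n$.

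For part (1), that $\widehat{e}^W_n$ induces $\widetilde{e}^W_n$ on $\widetilde{\cQ}_n$, I would use that the stabiliser in the Chevalley group of type $D_{n+1}$ of the subspace $\widetilde{V}$ contains a Chevalley subgroup $\widetilde{G}$ of type $B_n$. It suffices to show that the $\widehat{e}^W_n$-images of the points of $\widetilde{\cQ}_n$ span a $\widetilde{G}$-submodule of $\widehat{V}^W_n$ isomorphic to the $B_n$-Weyl module $W(2\lambda_n)$ housing the canonical veronesean embedding. Rather than invoking abstract branching rules, I would verify the inclusion on one representative point, for instance the maximal singular subspace spanned by $e_1,e_3,\dots,e_{2n-1}$, and then propagate it to all points of $\widetilde{\cQ}_n$ by transitivity of $\widetilde{G}$. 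A parallel check shows that the lines of $\widetilde{\cQ}_n$, which are conics, are mapped to conics, completing (1).

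For part (2), I would prove that $V^W_n$ is exactly the $\sigma$-fixed $\FF$-form of $\widetilde{V}^W_n$ (writing $\widetilde{V}^W_n$ for the codomain of $\widetilde{e}^W_n$, now viewed as a subspace of $\widehat{V}^W_n$ via (1)). By Theorem~\ref{mainlift}, $V^W_n$ is the $\FF$-span of $\widehat{\varphi}^{-1}(\varepsilon_n(\cQ_n))$; each point of $\cQ_n$ is $\sigma$-fixed, so its $\varepsilon_n$-image is $\sigma$-stable, and by comparing the canonical lift with its $\sigma$-twist through the uniqueness in Lemma~\ref{uniq}, the lift itself is $\sigma$-stable. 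Hence $V^W_n$ lies in the Baer $\FF$-subspace; the reverse containment is a dimension count, using that $\widetilde{e}^W_n(\cQ_n)$ generates $\widetilde{V}^W_n$ over $\widehat{\FF}$, itself a consequence of (1) and Theorem~\ref{Main Theorem}. Part (3) then follows formally from (1), (2), and the definition of the Weyl-like embedding. The factorisation $e^W_n=\nu\cdot e^{\mathrm{spin}}_n$ is obtained by descending, via $\sigma$-invariants on both sides, the known identity $\widetilde{e}^W_n=\widehat{\nu}\cdot\widetilde{e}^{\mathrm{spin}}_n$ of \cite{IP13bis}, where $\widehat{\nu}$ denotes the canonical veronesean map over $\widehat{\FF}$, whose restriction to the canonical Baer subgeometry is precisely $\nu$.

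The main obstacle is part (1): in characteristic $2$, which is where the statement has its content, the branching of Weyl modules from $D_{n+1}$ to $B_n$ is delicate because both half-spin modules of $D_{n+1}$ restrict to the single spin module of $B_n$, and the module of highest weight $\lambda_n+\lambda_{n+1}$ need not be semisimple on restriction, so one cannot simply invoke branching rules. A likely workaround is to abandon abstract representation theory and argue directly with explicit spanning sets in the spirit of Sections~\ref{Bsymp} and~\ref{orthogonal}: build a basis of pure wedges adapted both to a hyperbolic decomposition of $\widehat{q}$ and to the $\sigma$-action, verify all the claims combinatorially on indexing sets, and reduce the identity $e^W_n=\nu\cdot e^{\mathrm{spin}}_n$ to a statement about how exterior algebra and second symmetric powers interact under Galois descent.
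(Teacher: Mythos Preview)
The statement you are addressing is presented in the paper as a \emph{conjecture}, not as a theorem: the paper offers no proof and none of the surrounding text in Section~\ref{Wg k=n} supplies an argument beyond setting up the objects involved. There is therefore nothing to compare your proposal against; what you have written is a sketch of a possible attack on an open problem rather than a reconstruction of a proof the authors gave.

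That said, as a strategy your Galois-descent outline is coherent and targets the right structural points. A few places would need genuine work before it could be called a proof. First, for part~(1) you correctly flag that in characteristic~$2$ the restriction of the $D_{n+1}$ Weyl module of highest weight $\lambda_n+\lambda_{n+1}$ to a $B_n$-subgroup is not governed by classical branching and may fail to be semisimple; your fallback of computing with explicit spanning sets is reasonable but is essentially the whole content of the claim, not a workaround. Second, in part~(2) you invoke that $\widetilde{e}^W_n(\cQ_n)$ spans $\widetilde{V}^W_n$ over $\widehat{\FF}$ and cite Theorem~\ref{Main Theorem} for this, but that theorem concerns Grassmann embeddings, not the veronesean Weyl embedding $\widetilde{e}^W_n$; the spanning statement for the latter over the subfield configuration is a separate claim that needs its own argument. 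Third, the assertion that $\sigma$ extends to a semilinear automorphism of $\widehat{V}^W_n$ commuting with $\widehat{\varphi}$ is plausible (the Weyl module has an integral form) but should be justified explicitly, since the lifting construction of Section~\ref{s:lift} is sensitive to exactly which $\FF$-structure one uses.
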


\section*{Bibliography}

% \vskip.2cm\noindent
% \begin{minipage}[t]{\textwidth}
% Authors' addresses:
% \vskip.2cm\noindent\nobreak
% \centerline{
% \begin{minipage}[t]{7cm}
% Ilaria Cardinali and Antonio Pasini (retired) \\
% Department of Information Engineering and Mathematics\\
% University of Siena\\
% Via Roma 56, I-53100, Siena, Italy\\
% ilaria.cardinali@unisi.it \\
% antonio.pasini@unisi.it \\
% \end{minipage}\hfill
% \begin{minipage}[t]{6cm}
% Luca Giuzzi\\
% D.I.C.A.T.A.M. \\ Section of Mathematics \\
% Universit\`a di Brescia\\
% Via Branze 53, I-25123, Brescia, Italy \\
% luca.giuzzi@unibs.it
% \end{minipage}}
% \end{minipage}

\end{document}